\documentclass[11pt,a4paper]{article}

\usepackage{amsmath,amssymb}
\usepackage{lineno}
\usepackage{dsfont}
\usepackage{amsfonts}
\usepackage{amsthm}
\usepackage{mathtools}
\usepackage{multirow}
\usepackage{physics}
\usepackage{graphicx}
\usepackage{caption}
\usepackage{subcaption}
\usepackage{listings}
\usepackage{relsize}
\usepackage{bigints}
\usepackage[square,sort,comma,numbers]{natbib}
\usepackage[export]{adjustbox}
\usepackage[normalem]{ulem}
\usepackage{tikz}
\usetikzlibrary{matrix,cd,arrows,shapes.geometric}

\setlength{\textheight}{24.0cm}
\setlength{\textwidth}{16.0cm}
\setlength{\parindent}{0.0cm}
\setlength{\topmargin}{-1.0cm}
\setlength{\oddsidemargin}{0.0cm}

\numberwithin{equation}{section}
\theoremstyle{plain}
\newtheorem{theorem}{Theorem}[section]
\newtheorem{lemma}[theorem]{Lemma}
\newtheorem{proposition}[theorem]{Proposition}

\theoremstyle{definition}
\newtheorem{definition}[theorem]{Definition}
\newtheorem{assumption}[theorem]{Assumption}
\theoremstyle{remark}
\newtheorem{remark}[theorem]{Remark}
\newcommand{\mT}{\mathcal T}

\newcommand{\ve}{\varepsilon}
\newcommand{\la}{\langle}
\newcommand{\ra}{\rangle}

\begin{document}

\title{\huge Homogenization of the Richards equation in porous media: Multiscale modelling of  water transport in vegetated soil}
\author{{Andrew Mair$\,^{\mathrm{a}}$,    Mariya Ptashnyk$\,^{\mathrm{b}}$}\medskip\\
\small $^{\mathrm{a}}$  Department of Conservation of Natural Resources, NEIKER, Derio, Basque Country, Spain \\
\small $^{\mathrm{b}}$ Maxwell Institute for Mathematical Sciences, Department of Mathematics, School of\\  \small  Mathematical and Computer Sciences, Heriot-Watt University, Edinburgh, Scotland, UK}
\date{}

\maketitle

\vspace*{-0.4in}\begin{abstract}
In this paper we consider the multiscale modelling of water transport in vegetated soil. In the microscopic model we distinguish between subdomains of soil and plant tissue, and use the Richards equation to model the water transport through each. Water uptake is incorporated by means of a boundary condition on the surface between root tissue and soil. Assuming a simplified root system architecture, which gives a cylindrical microstructure to the domain, the two-scale convergence  and periodic unfolding methods are applied to rigorously derive a macroscopic model for water transport in vegetated soil. The degeneracy of the Richards equation and the dependence of root tissue permeability on the small parameter introduce considerable challenges in deriving macroscopic equations, especially in proving strong convergence. The variable-doubling method is used to prove the uniqueness of solutions to the model, and also to show strong two-scale convergence in the nonlinear terms of the equation for water transport through root tissue.
\end{abstract}

{\small \noindent{\it Keywords:}
Richards equation, degenerate parabolic equations, variable-doubling, dual-porosity,  homogenization, two-scale convergence and periodic unfolding
}

{\small \noindent{\it MSC codes:}
 35B27, 35K51, 35K65
}

\section{Introduction}
Mathematical models for water transport in soil are used to inform practice within different environmental contexts such as calculating crop yields~\citep{holzworth2014apsim}, monitoring flood risk~\citep{el1998integrated}, or  assessing hill slope stability~\citep{endrizzi2014geotop}.  Moreover, predictions from  models are   becoming even more important when  developing strategies to deal with the increasing impact of climate change.
The common model
for water transport through soil is  the Richards equation~\cite{richards1931capillary}, which describes water flow in unsaturated porous media. The effects of root-soil interactions tend to be accounted for by adding a sink term that models root water uptake~\cite{simunek2005hydrus}. These macroscopic sink terms  use the distribution of soil water content and root length density to determine the contribution of each soil zone to plant transpiration~\cite{vsimuunek2009modeling}, but do not explicitly account for water flow within the root system.   Alternatively, there exist models at the microscopic scale. Some of these represent the hydraulic architecture of the root system as a network of connected nodes, with axial and radial hydraulic conductivity values determining the rates of inter-root and root-soil water transport~\cite{doussan1998modelling,javaux2008use, couvreur2012simple}. Other microscopic models explicitly consider water flow through subdomains of soil and root tissue~\cite{arbogast1993simulation, doussan2006water, roose2004model}.
 However, the computational expense for numerical simulation of these microscopic models becomes prohibitively high as root system complexity and scale increases. Moreover,  these models do not consider    the influence of roots on the hydraulic properties of soil.  Dual-porosity models~\citep{gerke1993dual}, that can be rigorously derived from the microscopic description  via homogenization techniques~\cite{Arbogast_1990,  Bourgeat_1996, hornung1996homogenization}, incorporate  preferential flow through soil with heterogeneous hydraulic properties, but do not relate the pattern of this heterogeneity to the  root system architecture. Models for the preferential flow of soil water that is induced by the presence of a root
system were developed in~\cite{mair2023can, mair2022model}, but with a phenomenological formulation of the influence of roots on water transport and uptake.

Thus, the main aim of this paper is to  formulate a microscopic model that explicitly distinguishes between water flow through subdomains of soil and  root branches and  apply homogenization techniques to derive  a macroscopic model  that can be efficiently simulated while  accounting for water transport through plant roots and the effect of roots on the hydraulic properties of the soil.  In the  microscopic model, the Richards equation, with different permeability and water retention functions, is used to model water flow through the soil and root subdomains. Water uptake is described by the boundary conditions at the root surface. Pressure gradients within the root tissue are driven by transpiration,  incorporated through a boundary condition at the root crown. The difference between the properties of the bulk soil and the rhizosphere,  a narrow region of soil  around root branches, is reflected through the explicit spatial dependence  in the water retention and hydraulic conductivity functions.   In this work we consider periodically arranged root branches. For more general root systems, this simple geometry can be considered locally and, with a suitable transformation, a macroscopic model can  be derived using the locally periodic two-scale convergence and unfolding methods, \cite{ptashnyk2015, ptashnyk2013}.

The Richards equation, with its nonlinearities in the time derivative and elliptic operator, belongs to a larger class of degenerate parabolic equations.  Well-posedness results for these types of equations  and systems have been obtained by many authors, see e.g.~\cite{alt1983quasilinear, Benilan_1996, DiBenedetto_1987,  Jaeger_Kacur_1995,    Kacur_2009, Schweizer_2007}. Existence of strong solutions to the Richards equation was addressed  in~\cite{Merz_Rybka_2010}
and viscosity solutions for a general class of nonlinear degenerate parabolic equations were studied in~\cite{Kim_Pozar_2013}. In~\cite{otto1996l1, Knabner_Otto_2000} the uniqueness of solutions for degenerate parabolic equations and systems was shown using the variable-doubling method~\cite{Kruzkov_1970}, and  proving the $L^1$-contraction principle.

There are several results on  multiscale analysis  for the Richards equation and  degenerate parabolic equations in general. The homogenization of nonlinear parabolic equations with Dirichlet boundary conditions in a domain periodically perforated by small holes was studied in~\cite{Nandakumaran_2002}.  In~\cite{Amaziane_2013,  Benes_2019, cao_2014} the two-scale convergence method was applied to nonlinear equations with oscillating coefficients modelling the two-phase flow in a porous medium and in~\cite{chen_2005}  the  asymptotic analysis and error estimates were considered.
In~\cite{Jaeger_2021} the homogenization of the  Richards equation, in a medium with deterministic almost periodic microstructure of coarse and fine impermeable soft inclusions, is obtained using sigma-convergence method. An upscaling of the Richards equation to describe the flow in fractured porous media was considered in~\cite{List_2020}. Furthermore, in~\cite{Mikelic_2004} homogenization techniques have  been applied to dual-porosity models with  the flow  through porous particles and the inter-particle space described by the Richards equation.

In our microscopic model we consider a cylindrical microstructure representing the root branches where permeability of the root tissue is proportional to root branch radius and, hence, depends on the small parameter $\ve$.  Mutilscale analysis for parabolic and elliptic equations defined in domains with cylindrical microstructures was considered in~\cite{Melnyk_2007, Murat_2017, Neuss-Radu_1996}. Similar to~\cite{Mikelic_2004}, the dependence of the  permeability of the particles  on the small parameter requires  novel analytical approaches, compared to the existing multiscale analysis results for the Richards equations.  In particular, the $\ve$-dependence of the permeability tensor  means that for the proof of the strong convergence, required to pass to the limit in the nonlinear terms, a standard compactness argument cannot be used. In~\cite{Mikelic_2004} the convergence of the nonlinear terms was shown using a specific form of test function, in conjunction with the unfolding operator, also referred to as the dilation operator and periodic modulation method~\cite{Arbogast_1990, Bourgeat_1996}. Here, we apply the time-doubling method to show the equicontinuity  of the unfolded sequence of solutions to the microscopic problem  and  to prove the strong two-scale convergence. This is a novel application  of the variable-doubling method in homogenization theory, which will allow the derivation of macroscopic equations for other degenerate parabolic problems. We would also like to remark that the ideas used to show the convergence results in~\cite{Mikelic_2004} cannot be applied to our problem due to different  scaling in the permeability function and different boundary conditions on the surfaces of the microstructure.
 More specifically, in~\cite{Mikelic_2004} the continuity of  the pressure on the surface of the microstructure, i.e.~boundary between pore space and porous pellets (aggregates) of soil particles (corresponding in our model to the boundaries between soil and root tissue) is considered. In addition, there is an $\ve^2$-scaling in the flux within the pellets. These conditions allow the formulation of a problem for the unfolded function (dilation operator) of the pressure in the pellets, satisfying the Richards equation with differential operator with respect to the microscopic variables and with the Dirichlet boundary conditions, given by the pressure in the pore space. Then a specific test function is considered to prove the strong convergence result for the pressure in pellets. In our model, due to experimental measurements stating that root permeability is proportional to the root radius, we have $\ve$-scaling for the flux across the root radius. Also, we assume uptake of water by the roots is driven by the difference in pressure head between the root tissue and the soil, which leads to a potential discontinuity of pressure at the root-soil boundary. The resulting structure
of the equations does not allow us to formulate a problem for the unfolded pressure with respect to the microscopic variables.  Therefore, we cannot use  a similar choice of specific test function as in~\cite{Mikelic_2004} to obtain the estimates needed for the proof of strong convergence. Hence, a different approach is required.

The paper is organised as follows. In section~\ref{section:model_formulation} we formulate the microscopic model for the water flow in vegetated soil. Section~\ref{section:existence_uniqueness} is devoted to the analysis of the microscopic problem.  The  proof of the existence result follows the same ideas as in~\cite{alt1983quasilinear}, and we include only the main steps of the proof, and the estimates involving boundary conditions at the soil-root interface, which are not considered in~\cite{alt1983quasilinear}. We also present the proof of  the convergence for the nonlinear functions in the time derivatives, which  is simpler for our model compared to more general setting  analysed in~\cite{alt1983quasilinear}. The homogenization results and the derivation of the  macroscopic model are  presented in section~\ref{section:macro_model}. Typical examples for the nonlinear hydraulic conductivity  and water retention functions are given in the Appendix.

\section{Formulation of microscopic model}\label{section:model_formulation}

For the microscopic description of water flow in a vegetated soil,  we consider a simplified root system architecture where the root branches are straight, vertically oriented, of uniform radius, and periodically distributed throughout the domain.

Consider  $\Omega = (0, L_1)\times(0, L_2)\times(-L_3, 0)$ representing a section of vegetated soil, with $\Gamma_0 = \partial\Omega \cap \{ x_3 = 0\}$ and $\Gamma_{L_3} = \partial\Omega \cap \{ x_3 = -L_3\}$ . The regions occupied by root branches~$P^\ve$, the rhizosphere soil around each root branch~$R^\ve$, the bulk soil~$B^\ve$, and the combined total soil subdomain~$S^\ve$ are  defined as $B^\ve= \Omega \setminus \big( \overline P^\ve \cup \overline R^\ve\big)$, $S^{\varepsilon} = {\rm Int}\big(\overline R^{\varepsilon}\cup \overline B^{\varepsilon}\big)$, and
$$
P^\ve = \bigcup_{\xi \in \Xi_\ve} \ve (Y_P +\xi) \times (-L_3, 0), \quad  R^\ve = \bigcup_{\xi \in \Xi_\ve} \ve (Y_R \setminus \overline Y_P +\xi) \times (-L_3, 0).
$$
Here $\Xi_\ve = \{\xi \in \mathbb Z^2 \, : \,  \ve (\overline Y +\xi) \subset  (0, L_1)\times(0, L_2) \}
$,  the unit cell $Y=(0,1)^2$ and  $\overline Y_P \subset \overline Y_R \subset Y$ with boundaries $\Gamma_P$ and $\Gamma_R$ respectively, $\hat R= Y_R \setminus \overline Y_P$, and $\hat B= Y \setminus \overline Y_R$, see Figure~\ref{figure:entire_domain}.
The lateral surfaces of the  microstructure, given by the  root branches,  and  the boundaries between the rhizosphere and bulk soil are denoted by
$$
\Gamma_P^\ve=  \bigcup_{\xi \in \Xi_\ve} \ve (\Gamma_P  +\xi) \times (-L_3, 0)\quad \text{ and } \quad\Gamma_R^\ve=  \bigcup_{\xi \in \Xi_\ve} \ve (\Gamma_R  +\xi) \times (-L_3, 0),
$$
and, with~$J=B,~P,~R,~S$, define
$$
		\Gamma_{J, 0}^\varepsilon = \{x\in\partial\Omega\cap  \partial J^\varepsilon: x_3 = 0\},  \qquad
		\Gamma_{J, L_3}^\varepsilon  = \{x\in\partial\Omega\cap  \partial J^\varepsilon: x_3 = -L_3\}.
$$
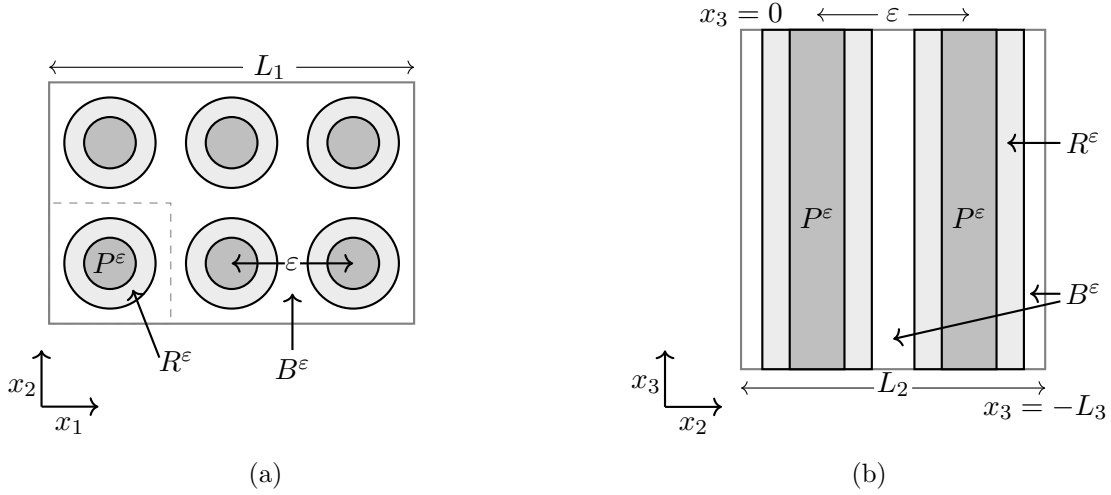
\begin{figure}
	\centering
	\begin{subfigure}{0.45\textwidth}
		\begin{tikzpicture}
			\draw [->, thick] (0,-1) -- (0.75,-1);
			\draw [->, thick] (0,-1) -- (0,-0.25);
			\node at (-0.2,-0.75) {$x_2$};
			\node at (0.37,-1.25) {$x_1$};
			\draw[gray, thick] (0.1,0.1) rectangle (4.9,3.3);
			\draw[gray, dashed] (0.1,0.1) rectangle (1.7,1.7);
			\filldraw[fill=gray!15, thick] (0.9, 0.9) circle (0.6);
			\filldraw[fill=gray!50, thick] (0.9, 0.9) circle (0.34);
			\filldraw[fill=gray!15, thick] (0.9, 2.5) circle (0.6);
			\filldraw[fill=gray!50, thick] (0.9, 2.5) circle (0.34);
			\filldraw[fill=gray!15, thick] (2.5, 0.9) circle (0.6);
			\filldraw[fill=gray!50, thick] (2.5, 0.9) circle (0.34);
			\filldraw[fill=gray!15, thick] (2.5, 2.5) circle (0.6);
			\filldraw[fill=gray!50, thick] (2.5, 2.5) circle (0.34);
			\filldraw[fill=gray!15, thick] (4.1, 0.9) circle (0.6);
			\filldraw[fill=gray!50, thick] (4.1, 0.9) circle (0.34);
			\filldraw[fill=gray!15, thick] (4.1, 2.5) circle (0.6);
			\filldraw[fill=gray!50, thick] (4.1, 2.5) circle (0.34);
			\draw [<-, thick] (2.5,0.9) -- (3.2,0.9);
			\node at (3.3,0.9) {$\varepsilon$};
			\draw [->, thick] (3.4,0.9) -- (4.1,0.9);
			\node at (0.9,0.9) {$P^\varepsilon$};
			\node at (1.77,-0.4) {$R^\varepsilon$};
			\draw [->, thick] (1.55,-0.35) -- (1.2,0.55);
			\node at (3.3,-0.5) {$B^\varepsilon$};
			\draw [->, thick] (3.3,-0.25) -- (3.3,0.5);
			\draw [<-] (0.1,3.5) -- (2.65,3.5);
			\node at (3,3.5) {$L_1$};
			\draw [->] (3.35,3.5) -- (4.9,3.5);
		\end{tikzpicture}
		\caption{}
	\end{subfigure}
	\qquad
	\begin{subfigure}{0.45\textwidth}
		\centering
		\begin{tikzpicture}
			\draw[gray, thick] (1,0.5) rectangle (5,5);
			\filldraw[fill=gray!15, thick] (1.28,0.5) rectangle (2.72,5);
			\filldraw[fill=gray!50, thick] (1.64,0.5) rectangle (2.36,5);
			\filldraw[fill=gray!15, thick] (3.28,0.5) rectangle (4.72,5);
			\filldraw[fill=gray!50, thick] (3.64,0.5) rectangle (4.36,5);
			\draw [<-] (1,0.25) -- (2.75,0.25);
			\node at (3,0.3) {$L_2$};
			\draw [->] (3.25,0.25) -- (5,0.25);
			\node at (1,5.2) {$x_3=0$};
			\node at (1,0.0) {$x_3=-L_3$};
			\node at (2,2.5) {$P^\varepsilon$};
			\node at (4,2.5) {$P^\varepsilon$};
			\draw [<-] (2,5.2) -- (2.8,5.2);
			\node at (3,5.2) {$\varepsilon$};
			\draw [->] (3.2,5.2) -- (4,5.2);
			\node at (5.5,3.5) {$R^\varepsilon$};
			\draw [->, thick] (5.2,3.5) -- (4.5,3.5);
			\node at (5.5,1.5) {$B^\varepsilon$};
			\draw [->, thick] (5.2,1.5) -- (4.8,1.5);
			\draw [->, thick] (5.2,1.4) -- (3,0.9);
		\end{tikzpicture}
		\caption{}
	\end{subfigure}
	\caption{An illustration of the domain~$\Omega$ comprised of bulk soil~$B^\varepsilon$,  rhizosphere soil~$R^\varepsilon$. and root tissue~$P^\varepsilon$; (a)  from above; (b)  cross-section of the~$x_2-x_3$ plane at~$x_1 = {L_1}/{2}$.}
	\label{figure:entire_domain}
\end{figure}
The water transport in unsaturated soil is modelled by the Richards equation
\begin{equation}
	\begin{aligned}\label{richards_soil}
		\partial_t\theta_{S}^\varepsilon(x,h_{S}^\varepsilon) - \nabla\cdot\big(K_S^\varepsilon(x,h_{S}^\varepsilon)(\nabla h_{S}^\varepsilon + e_3)\big) &= 0 &&\text{ in }\;   S^{\varepsilon}\times (0, T],
	\end{aligned}
\end{equation}
where~$e_3 = (0, 0, 1)^\top$.	
In root tissue, water is transported through the xylem, which can be regarded as a porous medium~\citep{hentschel2013simulation}, and hence is also modelled by the Richards equation
\begin{equation}\label{richards_root}
	\partial_t\theta_P(h_{P}^\varepsilon) - \nabla \cdot \big( H^\ve_r K_P(h_{P}^\varepsilon)(\nabla h_{P}^\varepsilon + e_3)\big) = 0 \quad \text{ in } \;\; P^{\varepsilon}\times(0, T].
\end{equation}
In equations \eqref{richards_soil} and \eqref{richards_root}, $K_S^\varepsilon$ and $K_P$ denote the hydraulic conductivity functions of soil and root tissue, and~$\theta_S^\varepsilon$ and $\theta_P$ are the soil and root water retention functions,   and $h^\ve_S$ and $h_P^\ve$ are soil and root xylem pressure heads, respectively.
The hydraulic conductivity of the root tissue is anisotropic, depending in the lateral directions on root circumference and intrinsic radial conductance of the root tissue $k_{\text{r}}$, and intrinsic axial conductance $k_{\text{ax}}$ in the vertical direction:
\begin{equation*}
\begin{aligned}
H_r^\ve = I_\ve H_r, \;  \text{ with } \; 	 H_r =
\begin{pmatrix}
2\pi   r\rho g k_{\text{r}} & 0 & 0\\
0 & 2\pi   r\rho g k_{\text{r}}& 0 \\
0 & 0 & \frac{k_{\text{ax}}\rho g}{L_3}
\end{pmatrix},  \;  \;
I_\varepsilon  =
\begin{pmatrix}
\varepsilon & 0 & 0\\
0 & \varepsilon & 0\\
0 & 0 & 1\\
\end{pmatrix}.
\end{aligned}
\end{equation*}
 Note, only the root radius scales with~$\varepsilon$, not the root length, and,  therefore, only the lateral root hydraulic conductivity values contain a factor $\varepsilon$.

To address the soil heterogeneity, we  consider different hydraulic conductivity and retention functions for  rhizosphere  $\theta_R, K_R$ and bulk soil $\theta_B, K_B$ and define
$$
\begin{aligned}
	&	\theta_S^\varepsilon(x, h_S^\varepsilon) = \theta_S\big( \frac{\hat x}{\varepsilon}, h_S^\varepsilon\big), \qquad K_S^\varepsilon(x, h_S^\varepsilon) = K_S\big( \frac{\hat x}{\varepsilon}, h_S^\varepsilon\big),\\
	& h_S^\varepsilon(t, x) =  h_R^\varepsilon(t, x) \chi_{R^\ve}(x) + h_B^\varepsilon(t, x) \chi_{B^{\varepsilon}}(x),
\end{aligned}
$$
where $\hat x = (x_1, x_2)$ and~$\theta_S(\cdot, h), K_S(\cdot, h)$ are~$Y$-periodic, such that for $y \in Y$ and $h\in\mathbb{R}$
			$$
			\theta_S(y, h)
			= \theta_R(h) \chi_{\hat R}(y)+  \theta_B(h) \chi_{\hat B}(y),
			\;
			 K_S(y, h) = K_R(h) \chi_{\hat R}(y) + K_B(h) \chi_{\hat B}(y).
				$$
Here $\chi_J$ denotes the characteristic function of the subdomain $J= \hat B, \hat R$, extended $Y$-periodically to $\mathbb R^2$, $h_R^\ve$ and $h_B^\ve$ denote  the pressure head in the rhizosphere and  bulk soil, respectively, and $\chi_{R^\ve}(x) = \chi_{\hat R}(\hat x/\ve)$,  $\chi_{B^\ve}(x) = \chi_{\hat B}(\hat x/\ve)$ for $x\in \Omega$.

We consider  the continuity of the pressure head and flux on the interface between rhizosphere and bulk soil
\begin{equation*}
h_{R}^{\varepsilon} = h_B^{\varepsilon}, \quad  K_R(h_{R}^\varepsilon)(\nabla h_{R}^\varepsilon + {e}_3)\cdot\nu = K_B(h_{B}^\varepsilon)(\nabla h_{B}^\varepsilon + e_3)\cdot\nu  \quad \text{ on } \; \Gamma_R^\ve\times(0, T],
\end{equation*}
and across the rhizosphere-root boundary~$\Gamma_P^\ve$, we set the flux of water  proportional to the difference between water pressure head in the soil~$h_S^\varepsilon$ and root tissue~$h_P^\varepsilon$~\citep{zarebanadkouki2016estimation,vanderborght2021hydraulic}.
 Note that in the case of an air-entry pressure head model, see e.g.~\cite{leverett1941capillary}, we would have  a discontinuity in the pressure head at the interface~$\Gamma_R^\varepsilon$ between the rhizosphere and bulk soil.

At the upper boundary defined by the union of the soil surface~$\Gamma_{S, 0}^\varepsilon$ and root collars~$\Gamma_{P, 0}^\varepsilon$, it is assumed that the total water flux, referred to as the crop evapotranspiration~$\text{ET}_\text{c}$, can be formulated as
$$
\text{ET}_\text{c} = (\text{K}_\text{cb}+\text{K}_\text{e})\text{ET}_\text{o}.
$$
Here the constant~$\text{ET}_\text{o}>0$ is the reference evapotranspiration,~$\text{K}_\text{cb}$ is the basal crop coefficient that determines the proportion of~$\text{ET}_\text{c}$ that comes from plant transpiration and~$\text{K}_\text{e}$ is the coefficient that describes the proportion due to evaporation~\citep{allen1998crop}. This setup assumes a constant value for~$\text{K}_\text{cb}>0$ according to average climatic conditions and a function of pressure head~$\text{K}_\text{e}$ that prescribes a rate of evaporation that increases with soil water content. As such, the outward water flux at the root collars~$\Gamma_{P, 0}^\varepsilon$ is equal to the potential transpiration~$\mathcal{T}_{\text{pot}} = \text{K}_\text{cb}\text{ET}_\text{o}$. The function~$f(h_S^\ve)$ for the water flux at the upper soil surface~$\Gamma_{S, 0}^\varepsilon$ is defined as a sum of~$\text{K}_\text{e}(h_S^\varepsilon)\text{ET}_\text{o}$ and functions describing precipitation and surface run-off, see Appendix for further details.
The boundary and initial conditions are therefore stated as
	\begin{equation}
		\begin{aligned}\label{bcs_soil}
			-K_S^\varepsilon(x,h_{S}^\varepsilon)(\nabla h_{S}^\varepsilon + e_3)\cdot\nu &= \varepsilon k_\Gamma (h_S^{\varepsilon}-h_P^\varepsilon) && \text{  on }  \Gamma_P^\ve\times (0, T],\\
			-K_S^\varepsilon(x, h_{S}^\varepsilon)(\nabla h_{S}^\varepsilon + e_3)\cdot\nu &= f(h_{S}^\varepsilon) &&\text{ on } \Gamma_{S, 0}^\varepsilon \times (0, T],\\
			-K_S^\varepsilon(x,h_{S}^\varepsilon)(\nabla h_{S}^\varepsilon + e_3)\cdot\nu &= 0 && \text{ on } \Gamma_N\times (0, T],\\
			h_{S}^\varepsilon &= 0 && \text{ on } \Gamma_{S, L_3}^\varepsilon \times (0, T],\\
			h_{S}^\varepsilon(0) &= h_{S, 0} && \text{ in }  S^\varepsilon,
		\end{aligned}
	\end{equation}
 where  $\Gamma_N = \partial \Omega \setminus (\Gamma_0 \cup \Gamma_{L_3})$, and
		\begin{equation}
		\begin{aligned}\label{bcs_root}
			-H_r^\varepsilon K_P(h_{P}^\varepsilon)(\nabla h_{P}^\varepsilon + {e}_3)\cdot \nu &= \varepsilon k_\Gamma (h_P^{\varepsilon}-h_S^\varepsilon) && \text{ on }  \Gamma_P^\ve\times (0, T],\\
			-H_r^\varepsilon K_P(h_{P}^\varepsilon)(\nabla h_{P}^\varepsilon + {e}_3)\cdot \nu &= \mathcal{T}_{\text{pot}} && \text{ on } \Gamma_{P, 0}^\varepsilon\times (0, T],\\
			h_{P}^\varepsilon &= a && \text{ on } \Gamma_{P, L_3}^\varepsilon\times (0, T],\\
			h_{P}^\varepsilon(0) &= h_{P, 0} && \text{ in }  P^\varepsilon,
		\end{aligned}
	\end{equation}
where~$k_\Gamma =k_{\rm r}\rho g$, with  $\rho$ denoting the water density  and $g$ gravitational acceleration.

\begin{remark}
 Notice that it is possible to consider the $\ve^2$ or $\ve^0$ scaling in  the matrix $I_\ve$  defined above, with the latter resulting into the same macroscopic
model as derived here. In the case where the radial permeability of root branches is
proportional to $\ve^2$,  we would obtain a macroscopic two-scale problem with microscopic
 diffusion across root branches. This may be important for some applications but will
 be computationally more expensive when considering large scale processes.
\end{remark}

\section{Existence and uniqueness results}\label{section:existence_uniqueness}
In this section we prove the well-posedness of  model \eqref{richards_soil}-\eqref{bcs_root}. There are many well-posedness results for degenerate parabolic equations,  but due to specific assumptions on the nonlinear functions and boundary conditions considered here, we present the main steps of the proofs, using the methods as in~\cite{alt1983quasilinear, Jaeger_Kacur_1995}.  We consider the space
$$
 V(J^\ve)=\big\{h\in H^1(J^\ve)\; : \;  h = 0 \; \text{ on } \; \Gamma^\ve_{J,L_3}\big\},
 \qquad \text{ for } \;  J=S,P. $$
For  $u \in V(J^\ve)$ and  $\psi \in V(J^\ve)^\prime$ the dual product is  denoted by   $\langle u, \psi \rangle_{V(J^\ve)^\prime}$
and  the inner product for $\phi \in L^p(A)$ and $\psi \in L^{q}(A)$  by $\la\phi, \psi\ra_{A} = \int_A \phi \psi dx$, given a domain $A \subset \mathbb R^3$ and $1/p+ 1/q =1$. Similar notation is used for the inner product for  $\phi \in L^p(\partial A)$ and $\psi \in L^{q}(\partial A)$. We also denote $\langle u, \psi \rangle_{V(J^\ve)^\prime, \tau} = \int_0^\tau \langle u(t), \psi(t) \rangle_{V(J^\ve)^\prime} dt$ and $A_\tau = (0,\tau) \times A$, where $A$ is a domain or boundary of a domain.

\begin{assumption} \label{assumption}
	\
\begin{itemize}
\item[(A1)] The functions~${\theta}_R, \theta_B, {\theta}_P:\mathbb{R}\to\mathbb{R}$ are strictly increasing and Lipschitz continuous,   $ {\theta}_J( 0) = 0$, and $- \infty<{\theta}_{J,\text{min}}\leq {\theta}_J(z)\leq {\theta}_{J,\text{max}}< + \infty$~for all $z\in\mathbb{R}$, where $J=R,B,P$.
\item[(A2)] The functions~$K_R, K_B, K_P:\mathbb{R}\to[0,\infty)$ are continuous and~$0<K_{J, 0}\leq K_J(z)\leq K_{J,\text{sat}}$, for  $J=R,B, P$ and  for all $z\in\mathbb{R}$.
\item[(A3)] The function~$f: \mathbb R \to \mathbb R$ is continuous and $\lvert f(z)\rvert\leq f_m$ for all~$z\in\mathbb{R}$.
\item[(A4)] The initial conditions  $h_{S,0}\in V(S^\ve)$, $h_{P,0}-a \in V(P^\ve)$ are non-positive.
\end{itemize}
\end{assumption}

\begin{remark}
 The assumption $\theta_J(0)=0$, with $J=R,B,P$, is not restrictive, since we can define shifted functions $\theta_J - \theta_{J,\text{sat}}$  that will satisfy the same problem, where $\theta_{J,\text{sat}}$ denotes  the saturated water content for the soil and root xylem respectively.\\
 Examples of functions satisfying Assumption~\ref{assumption} are given in the Appendix. For the
analysis, it is possible to consider $\mathcal T_{\rm pot}$ as a continuous, bounded function of $h^\ve_P$, with
 the same additional assumptions in the proofs of the uniqueness and non-negativity results as for function $f$.  The assumptions on  $K_J$, for $J=B, R, P$, ensure uniform ellipticity of equations~\eqref{richards_soil} and \eqref{richards_root}. Those assumptions  can be achieved by a regularisation of commonly used  hydraulic conductivity functions, without significant impact on the physically relevant values of the pressure heads, see~the Appendix for an example of a possible regularisation approach.
\end{remark}

\begin{definition}\label{weak solution}
A weak solution of microscopic model~\eqref{richards_soil}--\eqref{bcs_root} is $(h_S^\ve, h_P^\ve)$ with $h_S^\ve\in L^2(0,T;V(S^\ve))$,  $h_P^\ve-a\in L^2(0,T;V(P^\ve))$, $\partial_t {\theta}_S^\ve (\cdot, h_S^\ve)\in L^2(0,T; V(S^\ve)^\prime)$,   $\partial_t {\theta}_P(h_P^\ve)\in L^2(0,T; V(P^\ve)^\prime)$ and  satisfy
\begin{equation}\label{weak_formulation_soil}
\begin{aligned}
\int_0^T\Big[\big\langle\partial_t {\theta}_S^\ve(x, h_S^\ve), \phi \big\rangle_{V(S^\ve)^\prime} + \big\langle K_S^\ve(x, h_{S}^\ve)(\nabla h_{S}^\ve + {e}_3), \nabla \phi \big\rangle_{S^\ve}  \\
+ \ve  k_\Gamma  \big\langle h_S^\ve- h_P^\ve,\phi \big\rangle_{\Gamma_P^\ve} + \big\langle f(h_S^\ve), \phi \big\rangle_{\Gamma_{S,0}^\ve} \Big] dt & =0,
\end{aligned}
\end{equation}
\begin{equation}\label{weak_formulation_plant}
\begin{aligned}
\int_0^T\Big[\big\langle\partial_t {\theta}_P(h_P^\ve), \psi \big\rangle_{V(P^\ve)^\prime}  + \big\langle H_r^\ve K_P(h_{P}^\ve)(\nabla h_{P}^\ve + {e}_3), \nabla \psi \big\rangle_{P^\ve} \\
+ \ve k_\Gamma \big\langle h_P^\ve- h_S^\ve, \psi \big\rangle_{\Gamma_P^\ve} + \big \langle \mathcal{T}_\text{pot}, \psi \big\rangle_{\Gamma^\ve_{P,0}}\Big] dt & =0,
\end{aligned}
\end{equation}
for all $\phi\in L^2(0,T;V(S^\ve))$ and~$\psi\in L^2(0,T;V(P^\ve))$.
\end{definition}

In the proofs  we will use the functions~${\Theta}_P$, $\Theta_R$, ${\Theta}_B$, and $\Theta_S^\ve$, defined as
\begin{equation}\label{big_theta}
\begin{aligned}
{\Theta}_J(h_J)& = {\theta}_J(h_J)h_J + \int_{h_J}^0 {\theta}_J(z)dz \geq 0, \quad \text{ for } \; J=P, R,B, \\
{\Theta}^\ve_S(x, h_S) &=  {\Theta}_R(h_S) \chi_{R^\ve}(x) +  {\Theta}_B(h_S) \chi_{B^\ve}(x).
\end{aligned}
\end{equation}
The definition of $\Theta_J$ and monotonicity assumptions on $\theta_J$, for $J= P, R, B$,   imply
\begin{equation}\label{rslt_bg_sml_theta}
\begin{aligned}
{\Theta}_J(u) -  {\Theta}_J(v) =  {\theta}_J(u)u + \int_{u}^0{\theta}_J(z)dz - \Big[{\theta}_J(v)  v + \int_{v}^0{\theta}_J(z)dz\Big]
\\  \leq\big( {\theta}_J(u) - {\theta}_J(v)\big) u.
\end{aligned}
\end{equation}

\begin{remark}
For simplicity of presentation, in the proofs of existence, uniqueness, and non-positivity results, obtained for each fixed $\ve>0$,  we shall use the notation $\theta_S(u):= \theta_S^\ve(x,u)$,  $\Theta_S(u) :=\Theta_S^\ve(x, u)$, and  $K_S(u):=K^\ve_S(x, u)$.   Also, for the sake of brevity, in the proofs below we will neither specify nor relabel subsequences for which the convergence results hold.
\end{remark}

\begin{remark}
 Notice that for each fixed $\ve>0$ the proof of existence of a solution to model~~\eqref{richards_soil}--\eqref{bcs_root} follows the same ideas and steps as in~\cite{alt1983quasilinear} and in wider meaning can be inferred from the  results in~\cite{alt1983quasilinear}, obtained for more general systems of quasilinear elliptic-parabolic equations.  However, due to specific boundary conditions at the root-soil interface, not considered in~\cite{alt1983quasilinear},  we include the main steps of the proof for completeness.
\end{remark}

\begin{theorem}\label{theorem_microscopic_models_existence}
Under Assumption~\ref{assumption} for each fixed $\ve >0$ there exists a weak solution~$(h_S^\ve, h_P^\ve)$ to model~\eqref{richards_soil}-\eqref{bcs_root}.
\end{theorem}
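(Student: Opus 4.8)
The plan is to follow the by-now-standard strategy for degenerate elliptic--parabolic problems of the type in \cite{alt1983quasilinear, Jaeger_Kacur_1995}: first regularize the water retention functions to remove the degeneracy, construct solutions of the regularized \emph{coupled} system by a Galerkin scheme (or, equivalently, an implicit time discretization), derive a priori bounds that are uniform in the regularization and discretization parameters, and finally pass to the limit, the delicate point being a compactness argument that upgrades weak to strong convergence so that the limits of the nonlinear terms $\theta_J(h_J^\ve)$, $K_J(h_J^\ve)(\nabla h_J^\ve+e_3)$ and $f(h_S^\ve)$ can be identified. Concretely, for $\delta\in(0,1]$ I would replace $\theta_J$ by $\theta_J^\delta:=\theta_J+\delta\,\mathrm{id}$ (for $J=R,B,P$) and define $\theta_S^{\ve,\delta}$, $\Theta_J^\delta$, $\Theta_S^{\ve,\delta}$ accordingly; since $(\theta_J^\delta)^{-1}$ is then Lipschitz, the regularized system is genuinely (though not uniformly) parabolic, while $K_J$, $f$, $\mathcal{T}_{\rm pot}$ are already continuous and bounded by (A2), (A3).

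Taking orthonormal bases of $V(S^\ve)$ and $V(P^\ve)$, I would seek Galerkin approximations of the coupled regularized system for $(h_S^\ve,h_P^\ve)$; because $(\theta_J^\delta)'\ge\delta>0$ the resulting finite-dimensional system of ODEs is solvable, and the a priori estimates below make it global in time. The interface term $\ve k_\Gamma\la h_S^\ve-h_P^\ve,\cdot\ra_{\Gamma_P^\ve}$ is the jointly monotone term attached to the trace map $(u,v)\mapsto(u-v,\,v-u)$, so it does not spoil solvability; alternatively one runs a Rothe scheme in which each stationary step is solved by the Browder--Minty theorem after freezing the conductivities, closing the $K_J$-nonlinearity by a Schauder fixed point on $L^2$ via the compact embedding $H^1(J^\ve)\hookrightarrow\hookrightarrow L^2(J^\ve)$.

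For the a priori estimates I would test the soil equation with $h_S^\ve$ and the root equation with $h_P^\ve-a$ and add. For the time-derivative terms use $\frac{d}{dt}\int_{J^\ve}\Theta_J(h_J^\ve)=\la\partial_t\theta_J(h_J^\ve),h_J^\ve\ra$ (the chain rule; for the Rothe scheme its discrete analogue is \eqref{rslt_bg_sml_theta}) together with $\Theta_J\ge0$ from \eqref{big_theta} and $a\le0$; the two interface contributions combine to $+\ve k_\Gamma\|h_S^\ve-h_P^\ve\|_{L^2(\Gamma_P^\ve)}^2$ plus a lower-order term. Using the positivity $K_J\ge K_{J,0}>0$ (and, for fixed $\ve$, the coercivity of $I_\ve K_P$), the upper bounds on $K_J,f,\mathcal{T}_{\rm pot}$, Young's inequality, and the trace and Poincar\'e inequalities on $V(J^\ve)$ — the latter available since $h_J^\ve$ vanishes on the set $\Gamma^\ve_{J,L_3}$ of positive surface measure — one absorbs the gradient terms and obtains the mass bound $\sup_{t\in[0,T]}\int_{J^\ve}\Theta_J(h_J^\ve(t))\,dx\le C$ together with
\[
\|h_S^\ve\|_{L^2(0,T;V(S^\ve))}^2+\|h_P^\ve-a\|_{L^2(0,T;V(P^\ve))}^2+\ve\,\|h_S^\ve-h_P^\ve\|_{L^2((0,T)\times\Gamma_P^\ve)}^2\le C ,
\]
uniformly in the Galerkin dimension, in $\tau$ and in $\delta$ (with constants that may depend on $\ve$). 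Feeding this back into the equations bounds $\partial_t\theta_S^{\ve,\delta}(h_S^\ve)$ in $L^2(0,T;V(S^\ve)')$ and $\partial_t\theta_P^{\delta}(h_P^\ve)$ in $L^2(0,T;V(P^\ve)')$.

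The main obstacle is the passage to the limit in the nonlinear terms: because the equations degenerate, the bound on $h_J^\ve$ in $L^2(0,T;H^1)$ carries no time-compactness, so Aubin--Lions cannot be applied directly to $h_J^\ve$. Instead I would use the compactness/monotonicity lemma of \cite{alt1983quasilinear}: from $\theta_J^\delta(h_J^\ve)$ bounded in $L^2$, $\partial_t\theta_J^\delta(h_J^\ve)$ bounded in $L^2(0,T;V(J^\ve)')$, $h_J^\ve$ bounded in $L^2(0,T;H^1(J^\ve))$, and the monotone dependence of $\theta_J^\delta(h_J^\ve)$ on $h_J^\ve$, one deduces $\int_0^T\!\!\int_{J^\ve}(\theta_J^\delta(h_J^\ve)-\chi_J)\,h_J^\ve\,dx\,dt\to0$, where $\chi_J$ is the weak limit of $\theta_J^\delta(h_J^\ve)$; hence $\theta_J^\delta(h_J^\ve)\to\chi_J$ strongly in $L^2((0,T)\times J^\ve)$ and a.e. Since $\theta_J$ is continuous and strictly increasing (so $(\theta_J^\delta)^{-1}$ is continuous) and $\delta h_J^\ve\to0$, a Minty-type identification gives $\chi_J=\theta_J(h_J)$ and, crucially, $h_J^\ve\to h_J$ a.e.; then $K_J(h_J^\ve)\to K_J(h_J)$ a.e. and boundedly, so $K_J(h_J^\ve)(\nabla h_J^\ve+e_3)\rightharpoonup K_J(h_J)(\nabla h_J+e_3)$ in $L^2$, while the compact trace embedding $H^1(J^\ve)\hookrightarrow\hookrightarrow L^2(\partial J^\ve)$ yields strong convergence of the traces and thus of $f(h_S^\ve)$ on $\Gamma^\ve_{S,0}$ and of the interface term on $\Gamma_P^\ve$. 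One then lets $\delta\to0$ (all bounds being $\delta$-independent, the same argument applies) and recovers the initial conditions from $\theta_J(h_J^\ve)\in C([0,T];V(J^\ve)')$ and the injectivity of $\theta_J$. This produces a weak solution in the sense of Definition~\ref{weak solution}, which moreover inherits the energy bound above.
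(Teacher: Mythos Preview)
Your strategy is aligned with the paper's: a Rothe--Galerkin discretization, the energy estimate via $\Theta_J$, a time-translate/Alt--Luckhaus compactness argument for $\theta_J(h_J^\ve)$, and a.e.\ convergence of $h_J^\ve$ through the continuous inverse $\theta_J^{-1}$. The paper does \emph{not} regularize $\theta_J$; instead it uses a semi-implicit scheme in which both the conductivities $K_J$ \emph{and} the cross-coupling on $\Gamma_P^\ve$ are frozen at the previous time step, so that the two discrete equations decouple into separate finite-dimensional monotone problems at each step. Your regularization $\theta_J^\delta=\theta_J+\delta\,\mathrm{id}$ is harmless but unnecessary and costs an extra limit $\delta\to0$.

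There is, however, a gap in your boundary-convergence step. The compact embedding $H^1(J^\ve)\hookrightarrow\hookrightarrow L^2(\partial J^\ve)$ is purely spatial; from a bound in $L^2(0,T;H^1(J^\ve))$ with no time-compactness on $h_J^\ve$ itself it does \emph{not} yield strong trace convergence in $L^2((0,T)\times\partial J^\ve)$. What you actually have is strong convergence of $\theta_S(h_S^\ve)$ in $L^2((0,T)\times S^\ve)$ together with $\|\nabla\theta_S(h_S^\ve)\|_{L^2}\le L_{\theta_S}\|\nabla h_S^\ve\|_{L^2}\le C$. Applying the trace inequality to $|\theta_S(h_S^\ve)-\theta_S(h_S)|^2$, i.e.\ the multiplicative estimate
\[
\|w\|_{L^2((0,T)\times\Gamma_{S,0}^\ve)}^2\le C\big(\|w\|_{L^2(S^\ve_T)}^2+\|w\|_{L^2(S^\ve_T)}\|\nabla w\|_{L^2(S^\ve_T)}\big),\qquad w=\theta_S(h_S^\ve)-\theta_S(h_S),
\]
gives $\theta_S(h_S^\ve)\to\theta_S(h_S)$ strongly on $(0,T)\times\Gamma_{S,0}^\ve$; then $h_S^\ve\to h_S$ a.e.\ on the boundary via $\theta_S^{-1}$, and $f(h_S^\ve)\to f(h_S)$ by dominated convergence. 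This is exactly how the paper closes the argument. For the interface term $\ve k_\Gamma\langle h_S^\ve-h_P^\ve,\phi\rangle_{\Gamma_P^\ve}$ weak trace convergence (which follows from the $H^1$ bound) already suffices, since the test function is fixed.
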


\begin{proof}
The existence of a solution can be  shown using the Rothe-Galerkin method,
see e.g.~\citep{alt1983quasilinear, kavcur2006method}. Since $\ve >0$ is fixed, for the simplicity of notation we  omit the superscript~$\ve$ in $h^\ve_S$ and $h^\ve_P$. With $i=1, \ldots, N$, where~$N = T/ \tau \in \mathbb N$ for some $\tau >0$, and~$m\in\mathbb{N}$, we consider
$$
h_{S,i}^m = \sum_{k=1}^m\alpha_{i,k}^mv_k \quad \text{ and } \quad h_{P,i}^m - a = \sum_{k=1}^m\beta_{i,k}^mw_k,
$$
where $	\{v_k\}_{k=1}^\infty$ and $\{w_k\}_{k=1}^\infty$ are orthogonal bases for $V(S^\ve)$ and  $V(P^\ve)$,  orthonormal in $L^2(S^\ve)$ and $L^2(P^\ve)$, respectively, and
the  discrete-in-time equations
\begin{equation}\label{discrete_space_time_soil}
	\begin{aligned}
		\frac{1}{\tau}\big\langle  \theta_S(h_{S,i}^m) - \theta_S(h_{S,i-1}^m), \phi\big\rangle_{S^\ve} + \big\langle K_S(h_{S,i-1}^m)(\nabla h_{S,i}^m + {e}_3), \nabla \phi \big\rangle_{S^\ve}\\
		+  \ve\, k_\Gamma \big\langle h_{S,i}^m - h_{P,i-1}^m, \phi \big\rangle_{\Gamma_P^\ve}
		+  \big\langle f(h_{S, i-1}^m), \phi \big\rangle_{\Gamma_{S,0}^\ve}  = 0,
	\end{aligned}
\end{equation}
\begin{equation}\label{discrete_space_time_root}
	\begin{aligned}
		\frac{1}{\tau}\big\langle  \theta_P (h_{P,i}^m) - {\theta}_{P}(h_{P,i-1}^m), \psi \big\rangle_{P^\ve}
		+ \big\langle H_r^\ve K_P(h_{P,i-1}^m)(\nabla h_{P,i}^m + {e}_3), \nabla \psi \big\rangle_{P^\ve}\\
		-  \ve  \, k_\Gamma \langle h_{S,i-1}^m - h_{P,i}^m, \psi \rangle_{\Gamma_P^\ve}
		+ \langle \mathcal{T}_\text{pot}, \psi \rangle_{\Gamma_{P,0}^\ve} = 0,
	\end{aligned}
\end{equation}
for all~$\phi\in V_m$,~$\psi\in U_m$, with $V_m= {\rm span}\{v_1, ..., v_m\} $ and~$U_m = {\rm span} \{w_1, ..., w_m\}$ and  $h_{S, 0}^m$ and $h_{P, 0}^m-a$ being the projections of~$h_{S,0}$ and $h_{P,0}-a$ onto~$V_m$ and~$U_m$ respectively. The regularity of~$h_{S,0}$ and~$h_{P,0}$ implies~$h_{S, 0}^m\to h_{S,0}$ in~$V(S^\ve)$ and $h_{P, 0}^m-a\to h_{P,0}-a$ in~$V(P^\ve)$ as~$m\to\infty$.

Similar arguments as in~\cite{alt1983quasilinear, Ptashnyk_2006}, imply that   for given $(h^m_{S,i-1}, h^m_{P, i-1})$, under Assumption~\ref{assumption}, there exists solution~$(h_{S,i}^m,h_{P,i}^m)$ to~\eqref{discrete_space_time_soil} and~\eqref{discrete_space_time_root}, for all~$i=1, \ldots, N$.
To derive a priori estimates
we consider $h_{S,i}^m$ and~$h_{P,i}^m -a$ as  test functions in~\eqref{discrete_space_time_soil} and~\eqref{discrete_space_time_root}. Summing over $i=1, \ldots, n$, for  $1\leq n\leq N$, multiplying by $\tau$, and  using Assumption~\ref{assumption}, inequality~\eqref{rslt_bg_sml_theta}, and properties of the telescoping sum, yields
\begin{equation*}
\begin{aligned}
 \sum_{J=S,P}\! \Big[\big\| {\Theta}_J\big(h_{J,n}^m\big)\big \|_{L^1(J^\ve)}
\! +  \frac{\kappa_{J,0}}2 \!\sum_{i=1}^n \! \tau  \|\nabla h_{J,i}^m\|^2_{L^2(J^\ve)} \!  +     \frac{k_\Gamma}2    \tau \ve  \big\| h_{J,n}^m\big\|^2_{L^2(\Gamma_P^\ve)} \Big]
\\\leq  C_\delta
 + \! \sum_{J=S,P}\! \Big[ \sum_{i=1}^n \! \tau \delta \big(  \big\| h_{J,i}^m\big\|^2_{L^2(\Gamma_{J,0}^\ve)}\! +  \ve  \big\| h_{J,i}^m\big\|^2_{L^2(\Gamma_{P}^\ve)} \big) \qquad \qquad \\
+ \big \|\Theta_J\big(h_{J,0}^m\big) \big \|_{L^1(J^\ve)} \!  +   \big(\frac{k_\Gamma}2 + \delta\big)    \tau \ve \big\| h_{J,0}^m\big\|^2_{L^2(\Gamma_P^\ve)} \Big],
\end{aligned}
\end{equation*}
for some $\delta >0$,
where  $\kappa_{S,0}=\min\{K_{R,0}, K_{B,0}\}$,  $\kappa_{P,0}= \rho g  K_{P,0} \min\{2\pi \ve r k_{\rm r}, k_{\rm ax}/ L_3 \}$,
 and  the Cauchy-Schwarz inequality and  properties of the telescoping sum imply
\begin{equation*}
\begin{aligned}
&\sum_{i=1}^n\tau \int_{\Gamma_{P}^\ve}\! \!\Big(\big\lvert h_{S,i}^m\big\rvert^2 -h_{P,i-1}^mh_{S,i}^m  -h_{S,i-1}^mh_{P,i}^m +\big\lvert h_{P,i}^m\big\rvert^2\Big)d\gamma\\
& \qquad \geq \frac{\tau }{2}\Big(\big\| h_{S,n}^m\big\|^2_{L^2(\Gamma_P^\ve)}  +\big\| h_{P,n}^m\big\|^2_{L^2(\Gamma_P^\ve)}-\big\| h_{S,0}^m\big\|^2_{L^2(\Gamma_P^\ve)} - \big\| h_{P,0}^m\big\|^2_{L^2(\Gamma_P^\ve)}\Big).
\end{aligned}
\end{equation*}
To estimate the boundary integrals, we use the trace theorem, the generalised Poincar\'e inequality, and~$h_{S,i}^m=0$ on $\Gamma_{S, L_3}$ and~$h_{P,i}^m=a$ on $\Gamma_{P, L_3}$, to obtain
\begin{equation}\label{trace_poincare_dirichlet_1}
\sum_{i=1}^n\tau \Big(\ve \| h_{J,i}^m\|^2_{L^2(\Gamma_P^\ve)} +  \| h_{J,i}^m\|^2_{L^2(\Gamma_{J,0}^\ve)} \Big)
\leq C_1\sum_{i=1}^n\tau \|\nabla h_{J,i}^m\|^2_{L^2(J^\ve)} + C_2a^2,
\end{equation}
for $J=S,P$, where  $C_1, C_2>0$ may depend on $\ve$ and $C_2 = 0$ when $J=S$.
Combining the estimates from above and using assumptions on initial conditions, together with the Poincar\'e inequality, yield
\begin{equation}\label{apriori_1}
 \max\limits_{1 \leq i \leq N} \|\Theta_J(h_{J,i}^m) \|_{L^1(J^\ve)} \leq C, \quad \sum_{i=1}^N\tau \Big[\| h_{J,i}^m\|^2_{L^2(J^\ve)} +  \|\nabla h_{J,i}^m\|^2_{L^2(J^\ve)}\Big] \leq C,
\end{equation}
for $J=S,P$, where $C>0$ may depend on $\varepsilon$.
Summing~\eqref{discrete_space_time_soil} over~$j = i+1, ..., i+l$, with~$l = 1,...,N-1$, and using the properties of the telescoping sum, then taking
$\phi = h_{S, i+l}^m - h_{S, i}^m$ as a test function, and summing over~$i = 1,...,N-l$, gives
$$
\begin{aligned}
&\sum_{i=1}^{N-l}\frac{1}{\tau}\big\langle \theta_{S}(h_{S,i+l}^m)- \theta_{S}(h_{S,i}^m), h_{S, i+l}^m - h_{S, i}^m\big \rangle_{S^\ve}\\
&\leq \sum_{i=1}^{N-l}\sum_{j=i+1}^{i+l} \Big|\big \langle K_S(h_{S,j-1}^m)(\nabla h_{S,j}^m + {e}_3), \nabla(h_{S, i+l}^m - h_{S, i}^m) \big \rangle_{S^\ve}\Big|\\
&\quad +\sum_{i=1}^{N-l}\sum_{j=i+1}^{i+l}\Big|\ve k_\Gamma \big\langle  h_{S,j}^m-h_{P,j-1}^m, h_{S, i+l}^m - h_{S, i}^m\big\rangle_{\Gamma_P^\ve} + \big\langle f(h_{S, j-1}^m), h_{S, i+l}^m - h_{S, i}^m\big\rangle_{\Gamma_{S,0}^\ve}\Big|.
\end{aligned}
$$
Performing similar calculations with
$\phi = h_{P, i+l}^m - h_{P, i}^m$ as test function in \eqref{discrete_space_time_root} and using  estimates on $h_{S,j}^m$ and $h_{P,j}^m$ in \eqref{apriori_1} imply
\begin{equation}\label{estim_diff_t_S}
\sum_{i=1}^{N-l}\tau\big\langle {\theta}_{J}(h_{J,i+l}^m)- {\theta}_{J}(h_{J,i}^m), h_{J, i+l}^m - h_{J, i}^m\big \rangle_{J^\ve} \leq C l \tau, \qquad \text{ for } \; J = S,P.
\end{equation}
To pass to the limit in~\eqref{discrete_space_time_soil} and~\eqref{discrete_space_time_root} as $m, N \to \infty$
we consider the piecewise constant interpolations in time
\begin{equation}\label{piecewise_interploations}
\overline{h}_{J,N}^m(t,x) = h_{J,i}^m(x) \; \text{ for }\; t\in(\tau(i-1),~\tau i] \text{ with }  i=1,...,N,\;  \text{ and } x \in J^\ve,
\end{equation}
and $\hat{h}_{J,N}^m(t, x) = \overline{h}_{J,N}^m(t-\tau,x) = h_{J,i-1}^m(x)$, where  $\hat{h}_{J,N}^m(t,x) = h_{J,0}^m(x)$  for~$t\in(0,\tau]$, for~$J=S,P$.
Then   \eqref{apriori_1} and  \eqref{estim_diff_t_S} yield
\begin{eqnarray}\label{eqtn_continuous_analogues}
\Big\langle {\theta}_{J}(\overline{h}_{J,N}^m(\cdot+\lambda))- {\theta}_{J}(\overline{h}_{J,N}^m), \, \overline{h}_{J,N}^m(\cdot+\lambda) - \overline{h}_{J,N}^m\Big\rangle_{(0,T-\lambda)\times J^\ve} \leq  C\lambda, \\
 \sup_{0\leq t\leq T}\big\| {\Theta}_J(\overline{h}_{J,N}^m(t))\big\|_{L^1(J^\ve)}  + \big\|\overline{h}_{J,N}^m\big\|_{L^2(0,T; H^1(J^\ve))}   + \big\|\hat{h}_{J,N}^m\big\|_{L^2(0,T; H^1(J^\ve))}   \leq C, \nonumber
\end{eqnarray}
for $J=S,P$, uniform with respect to~$m$ and~$N$, where~$\lambda\in(l\tau, (l+1)\tau)$ for some~$l=$~$0,...,N-1$.
Using estimates \eqref{eqtn_continuous_analogues} and equations  \eqref{discrete_space_time_soil} and \eqref{discrete_space_time_root} we obtain
\begin{equation}\label{equation_lemma_discrete_time_derivative}
\big\|\partial_{t}^\tau {\theta}_{S}\big(\overline{h}_{S,N}^m\big)\big\|_{L^2(0,T;V(S^\ve)^\prime)} +
\big\|\partial_{t}^\tau {\theta}_{P}\big(\overline{h}_{P,N}^m\big)\big\|_{L^2(0,T;V(P^\ve)^\prime)} \leq C,
\end{equation}
where
$$
\partial_{t}^\tau {\theta}_{J} (\overline{h}_{J,N}^m)= \frac{1}{\tau}\Big[ {\theta}_{J}(\overline{h}_{J,N}^m)- {\theta}_{J}(\hat{h}_{J,N}^m)\Big], \qquad \text{ for } J = S,P.
$$
Combining estimates \eqref{eqtn_continuous_analogues} and \eqref{equation_lemma_discrete_time_derivative} yields
\begin{equation}\label{weak_convergence_h}
\begin{aligned}
\hat{h}_{J,N}^m\rightharpoonup \hat{h}_J, \quad 		\overline{h}_{J,N}^m\rightharpoonup h_J &\;  \text{ weakly in }L^2(0,T; H^1(J^\ve)),  &&  J=S,P, \\
\partial_{t}^\tau {\theta}_{J}(\overline{h}_{J,N}^m)\rightharpoonup \mu_J  &  \; \text{ weakly in } L^2(0,T; V(J^\ve)^\prime),
 && \text{ as } m,N\to \infty.
\end{aligned}
\end{equation}
The boundedness, Lipschitz continuity and monotonicity  of~$ {\theta}_J$  imply
\begin{equation}\label{eqn_kolmogorov_compactness}
\begin{aligned}
&\big\| {\theta}_J(\overline{h}_{J,N}^m)\big\|_{L^2((0,T)\times J^\ve)} \leq C, \\
& \big\| \nabla {\theta}_J \big(\overline{h}_{J,N}^m\big) \big \|_{L^2((0,T)\times J^\ve)} \leq L_{\theta_J}
\|\nabla \overline{h}_{J,N}^m \|_{L^2((0,T)\times J^\ve)}, \\
&\big\| {\theta}_J \big(\overline{h}_{J,N}^m(\cdot + \lambda)\big)- {\theta}_J\big(\overline{h}_{J,N}^m\big)\big\|_{L^2((0,T-\lambda)\times J^\ve)}^2\\
&  \quad \leq L_{\theta_J}\big\langle \theta_J\big(\overline{h}_{J,N}^m(\cdot+\lambda)\big)- {\theta}_J\big(\overline{h}_{J,N}^m\big), \overline{h}_{J,N}^m(\cdot+\lambda)-\overline{h}_{J,N}^m\big\rangle_{J^\ve_{T-\lambda}} \leq  C \lambda,
\end{aligned}
\end{equation}
for $J=P,R,B$, where $C>0$ is independent of~$m$ and~$N$, and   $L_{\theta_J}$ are the Lipschitz constants for $\theta_J$.
Using~\citep[Theorem 1]{simon1986compact}, there exists $z_J \in  ~L^2((0,T)\times J^\ve)$,  for $J=P,R,B$,  such that
\begin{equation}\label{eqn_result_theta_strong_L2_space_time_convergence}
\begin{aligned}
{\theta}_J\big(\overline{h}_{J,N}^m\big)\to z_J\qquad\text{strongly in}~L^2((0,T)\times J^\ve) \quad \text{ as } \; m,N\to\infty.
\end{aligned}
\end{equation}
Since~$ {\theta}_J$ is strictly increasing and continuous, it admits a continuous inverse and
\begin{equation}\label{eqn_result_ae_convergence_h_to_?}
\overline{h}_{J,N}^m =  {\theta}_J^{-1}\big( {\theta}_J(\overline{h}_{J,N}^m)\big)\to {\theta}_J^{-1}(z_J) \quad \text{ a.e.~in } \; (0,T)\times J^\ve \quad \text{ as } \; m,N\to\infty,
\end{equation}
and,  because~$\overline{h}_{J,N}^m\rightharpoonup h_J$  in~$L^2(0,T; H^1(J^\ve))$ as~$m,N\to\infty$, we have $ {\theta}_J^{-1}\big(z_J\big) = h_J $ and~$z_J =  {\theta}_J\big(h_J\big)$, for $J=P,R,B$.
Applying the change of variables~$t=t+\lambda$, with~$\lambda = \tau$, in the last estimate in \eqref{eqn_kolmogorov_compactness} and using the monotonicity and Lipschitz continuity of $\theta_J$  yield
\begin{eqnarray}\label{eqn8_lemma_theta_strong_hat}
 \big\| {\theta}_{J}\big(\overline{h}_{J,N}^m\big) -  {\theta}_{J}\big(\hat{h}_{J,N}^m\big)\big\|_{L^2((0,T)\times J^\ve)}^2  \leq
L_{\theta_J}\Big[  \tau \big\langle {\theta}_{J}(h_{J,1}^m)- {\theta}_{J}(h_{J,0}^m), h_{J,1}^m-h_{J,0}^m\big\rangle_{J^\ve} \nonumber
\\
 + \int_\tau^T \!\!\!\langle {\theta}_{J}\big(\overline{h}_{J,N}^m\big)- {\theta}_{J}\big(\hat{h}_{J,N}^m\big), \overline{h}_{J,N}^m - \hat{h}_{J,N}^m\rangle_{J^\ve}dt \Big]
\leq   C\tau^{\frac 12},
\end{eqnarray}
 for $J=P,R,B$. Thus, using \eqref{eqn_result_ae_convergence_h_to_?}, we have
		\begin{equation}\label{eqn9_lemma_theta_strong_hat}
				 {\theta}_{J}\big(\hat{h}_{J,N}^m\big)\to {\theta}_{J}\big(h_{J}\big)\quad\text{strongly in }L^2((0,T)\times J^\ve) \, \text{ as } \,  m, N\to\infty,
		\end{equation}
and  $\hat{h}_{J,N}^m = \theta_J^{-1}(\theta_J(\hat{h}_{J,N}^m)) \to \theta_J^{-1}(\theta_J(h_J)) =h_J$ a.e.~in $(0,T)\times J^\ve$  as $m,N\to\infty$, and
\begin{equation}\label{eqn11_lemma_h_hat_weakly_h}
				\hat{h}_{J,N}^m  \rightharpoonup h_J \quad\text{weakly in } \; L^2\big(0,T; H^1(J^\ve)\big) \; \text{ as } \; m,N\to\infty, \; \text{ for } \;  J=P,S.
\end{equation}
 Boundedness of $\theta_J$ ensures $\partial_{t}^\tau {\theta}_{J}(\overline{h}_{J,N}^m)\in L^2((0,T)\times J^\ve)$ and
\begin{equation}\label{eqn_convergence_diff_quotient_to_time_derivative}
\begin{aligned}
\int_0^T\!\!\!\big\langle\partial_{t}^\tau\big( {\theta}_{J}\big(\overline{h}_{J,N}^m\big)\big), w\big\rangle_{V(J^\ve)^\prime} dt
	\to 
	- \int_0^T\!\!\!\big\langle {\theta}_{J}\big(h_{J}\big), \partial_{t}w\big\rangle_{V(J^\ve)^\prime} dt, \; \text{ as } m,N\to\infty,
			\end{aligned}
		\end{equation}
	for $w \in C_0^\infty(0,T; C_{\Gamma_{J, L_3}}^\infty(\overline{J^\ve}))$,   where  $C_{\Gamma_{J,L_3}}^\infty(\overline{J^\ve}) = \{v\in C^\infty(\overline{J^\ve})\, : \, v=0 \text{ on } \Gamma^\ve_{J,L_3}\}$, and  $J=P,S$. Considering $w = \kappa\varphi$  with  $\kappa\in C^\infty_0(0,T)$ and~$\varphi\in C_{\Gamma_{J, L_3}}^\infty(\overline{J^\ve})$ and using the convergence results in~\eqref{weak_convergence_h}, together with the definition of the weak derivative,  yields $ \mu_J = \partial_t {\theta}_J\big(h_{J}\big)$ for $J=P, S$.
Continuity of $K_J$ implies  $K_J\big(\hat{h}_{J,N}^m\big)\to K_J(h_J)$  a.e.~in $(0,T)\times J^\ve$  and, since~$|K_J(\hat{h}_{J,N}^m)|\leq K_{\rm max}$, for some $K_{\rm max}>0$,   by the dominated convergence theorem we have
		\begin{equation}\label{eqn_Kn_to_K_strongly}
		\begin{aligned}
				& K_J(\hat{h}_{J,N}^m)\to K_J(h_J) &&  \text{strongly in }  L^p((0,T)\times J^\ve),   \\
				& K_J(\hat{h}_{J,N}^m)(\nabla\overline{h}_{J,N}^m + {e}_3) \rightharpoonup K_J(h_J)(\nabla h_J + {e}_3)   && \text{weakly in }  L^q((0,T)\times J^\ve)^3,
				\end{aligned}
		\end{equation}
	as~$m,N\to\infty$, with $p>2$ and $q = 2p/(p+2)$, and $J =P,S$.
	Using the trace inequality applied to $|{\theta}_{S}\big(\hat{h}_{S,N}^m\big) -  {\theta}_{S}\big(h_{S}\big)|^2$, along with the Lipschitz continuity of $\theta_S$ and boundedness of $\hat{h}_{S,N}^m$ in $L^2(0,T; H^1(S^\ve))$,  yields
$$
\begin{aligned}
\big\| {\theta}_{S}\big(\hat{h}_{S,N}^m\big) -  {\theta}_{S}\big(h_{S}\big)\big\|_{L^2((0,T)\times \Gamma^\ve_{S,0})}^2 \leq
C \Big[\big\| {\theta}_{S}\big(\hat{h}_{S,N}^m\big) -  {\theta}_{S}\big(h_{S}\big)\big\|_{L^2( S^\ve_T)}^2 \\
+ \| {\theta}_{S}\big(\hat{h}_{S,N}^m\big) -  {\theta}_{S}\big(h_{S}\big)\big\|_{L^2(S^\ve_T)} \big( \|\nabla \hat{h}_{S,N}^m\|_{L^2(S^\ve_T)} +\|\nabla h_{S}\|_{L^2(S^\ve_T)}\big)  \Big].
\end{aligned}
$$
Combining this with  the convergence of~$ {\theta}_{S}(\hat{h}_{S,N}^m) $ in~$L^2((0,T)\times S^\ve)$ implies
\begin{equation}\label{eqn_strong_convergence_theta_hat_gamma}
{\theta}_{S}(\hat{h}_{S,N}^m)\to  {\theta}_{S}(h_{S})\qquad\text{strongly in }L^2((0,T)\times \Gamma^\ve_{S,0}),
\end{equation}
and $\hat{h}_{S,N}^m = {\theta}_S^{-1}( {\theta}_S(\hat{h}_{S,N}^m)) \to {\theta}_S^{-1}( {\theta}_{S}(h_{S})) = h_S$ a.e.~in $(0,T)\times\Gamma^\ve_{S,0}$,  as  $m,N\to\infty$.
Then the continuity and boundedness of~$f$ ensure the convergence of $ f(\hat{h}_{S,N}^m)$.

Taking~$\phi\in C_0^\infty(0,T; C_{\Gamma_{S,L_3}}^\infty(\overline{S^\ve}))$  and $\psi\in C_0^\infty(0,T; C_{\Gamma_{P,L_3}}^\infty(\overline{P^\ve}))$	as  test functions in~\eqref{discrete_space_time_soil} and~\eqref{discrete_space_time_root}, respectively,   and considering the limits as $m,N \to \infty$, we obtain that $(h_S, h_P)$ is a weak solution of~\eqref{richards_soil}-\eqref{bcs_root}.
\end{proof}

\begin{remark}
In some models  no water flux  at root tips has been considered, i.e.~$ -H_r^\ve K_P(h_{P})(\nabla h_{P} + {e}_3)\cdot \nu   = 0$   on  $(0,T]\times  \Gamma^\ve_{P,L_3}$. Theorem~\ref{theorem_microscopic_models_existence} holds also for such boundary conditions and the only difference  is in the estimation of the boundary integral.
From~\eqref{discrete_space_time_root}, applying the trace theorem and  Poincar\'e inequality,  we obtain
\begin{equation*}\label{eq8_dscrt_es}
\begin{aligned}
&\big\| {\Theta}_P\big(h_{P,n}^m\big)\big\|_{L^1(P^\ve)} + \sum_{i=1}^n \tau \Big[\|\nabla h_{P,i}^m\|^2_{L^2(P^\ve)} + \ve\| h_{P,i}^m\|^2_{L^2(\Gamma_P^\ve)} \Big] \\
& \qquad \leq C\Big[ 1 +   \sum_{i=1}^n\tau \ve \| h_{S,i-1}^m\|^2_{L^2(\Gamma_P^\ve)} + \big\| {\Theta}_{P}\big(h_{P,0}^m\big)\big\|_{L^1(P^\ve)}\Big].
\end{aligned}
\end{equation*}
Thus in the equation for $h_{S,i}^m$ the boundary integral involving $h_{P,i}^m$ can be estimated
$$
\sum_{i=1}^n\tau \ve \| h_{P,i}^m\|^2_{L^2(\Gamma_P^\ve)}\leq
C\Big[ 1 +  \sum_{i=1}^n\tau \ve \| h_{S,i-1}^m\|^2_{L^2(\Gamma_P^\ve)} \Big]
\leq C \Big[1+ \sum_{i=1}^n\tau  \| \nabla h_{S,i}^m\|^2_{L^2(S^\ve)} \Big].
$$
\end{remark}

\begin{theorem}\label{theorem_microscopic_models_uniqueness}
Under  Assumption~\ref{assumption} and, additionally, if $f$ is non-decreasing and~$K_R$, $K_B$, and $K_P$ are Lipschitz continuous, the weak solution~$(h_S^\ve, h_P^\ve)$ of model \eqref{richards_soil}-\eqref{bcs_root} is unique.
\end{theorem}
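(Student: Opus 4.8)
The plan is to follow the $L^1$-contraction strategy of~\cite{otto1996l1, Knabner_Otto_2000}, built on Kru\v{z}kov's variable-doubling in time. Let $(h_S^1,h_P^1)$ and $(h_S^2,h_P^2)$ be two weak solutions of \eqref{richards_soil}--\eqref{bcs_root} for the same data; the superscript $\ve$ is suppressed since $\ve>0$ is fixed. Because $\theta_R,\theta_B,\theta_P$ are strictly increasing by (A1), it is enough to prove $\theta_S^\ve(h_S^1)=\theta_S^\ve(h_S^2)$ and $\theta_P(h_P^1)=\theta_P(h_P^2)$ a.e., and this follows from the contraction estimate
\[
\int_{S^\ve}\!\big(\theta_S^\ve(h_S^1(\tau))-\theta_S^\ve(h_S^2(\tau))\big)^{+}dx+\int_{P^\ve}\!\big(\theta_P(h_P^1(\tau))-\theta_P(h_P^2(\tau))\big)^{+}dx\le 0\quad\text{for a.e. }\tau\in(0,T),
\]
together with the symmetric inequality obtained by interchanging the two solutions.

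To derive this, I would subtract \eqref{weak_formulation_soil} for $h_S^1$ and $h_S^2$, and \eqref{weak_formulation_plant} for $h_P^1$ and $h_P^2$, and test with a regularised positive-sign function of the difference. Since $\partial_t\theta_J(h_J^i)$ lies only in $L^2(0,T;V(J^\ve)^\prime)$, the natural choice $\mathrm{sgn}^+(h_J^1-h_J^2)$ is not in $H^1$, is inadmissible, and affords no chain rule for the time-derivative term; this is the main obstacle, and it is overcome by doubling the time variable. Fix a symmetric mollifier $\rho_n$ on $\mathbb R$, a cutoff $\zeta\in C_0^\infty(0,T)$ approximating $\chi_{(0,\tau)}$, and a smooth non-decreasing approximation $\mathrm{sgn}_\delta^+$ of the positive sign with $\mathrm{sgn}_\delta^+(r)=0$ for $r\le0$. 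In \eqref{weak_formulation_soil} for $h_S^1$ at time $t$ I take the test function $\rho_n(t-s)\,\zeta(s)\,\mathrm{sgn}_\delta^+\!\big(h_S^1(t,\cdot)-h_S^2(s,\cdot)\big)$, in \eqref{weak_formulation_soil} for $h_S^2$ at time $s$ the function $\rho_n(t-s)\,\zeta(t)\,\mathrm{sgn}_\delta^+\!\big(h_S^1(t,\cdot)-h_S^2(s,\cdot)\big)$, subtract, integrate in $(t,s)$, and do the same with the root equations; all of these vanish on the Dirichlet boundaries, as $h_S^i=0$ on $\Gamma_{S,L_3}^\ve$ and $h_P^i=a$ on $\Gamma_{P,L_3}^\ve$, so they are admissible. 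Using $\partial_t\rho_n(t-s)=-\partial_s\rho_n(t-s)$ to transfer the time derivative onto $\rho_n\zeta$ and then letting $n\to\infty$, $\zeta\uparrow\chi_{(0,\tau)}$, and $\delta\to0$ — and using $\mathrm{sgn}^+(\theta_J(h_J^1)-\theta_J(h_J^2))=\mathrm{sgn}^+(h_J^1-h_J^2)$ — the degenerate time-derivative contributions converge to $\int_{S^\ve}(\theta_S^\ve(h_S^1(\tau))-\theta_S^\ve(h_S^2(\tau)))^{+}dx+\int_{P^\ve}(\theta_P(h_P^1(\tau))-\theta_P(h_P^2(\tau)))^{+}dx$ for a.e.\ $\tau$, the $t=0$ terms vanishing because the initial data coincide.

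It then remains to verify the signs of the other terms. Writing $K_S^\ve(h_S^1)(\nabla h_S^1+e_3)-K_S^\ve(h_S^2)(\nabla h_S^2+e_3)=K_S^\ve(h_S^1)\nabla(h_S^1-h_S^2)+(K_S^\ve(h_S^1)-K_S^\ve(h_S^2))(\nabla h_S^2+e_3)$ and pairing with $(\mathrm{sgn}_\delta^+)'(h_S^1-h_S^2)\nabla(h_S^1-h_S^2)$, the first part gives a non-negative integral (as $K_S^\ve\ge0$ and $(\mathrm{sgn}_\delta^+)'\ge0$) which is simply discarded, while the second part, supported on $\{0<h_S^1-h_S^2<\delta\}$ and controlled there by the Lipschitz continuity of $K_R,K_B,K_P$ (the extra hypothesis), tends to $0$ as $\delta\to0$ by dominated convergence; the root diffusion term is handled the same way, $I_\ve K_P$ being a constant positive-definite matrix times the Lipschitz scalar factor. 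Adding the soil and root identities, the $\ve k_\Gamma$ interface contributions on $\Gamma_P^\ve$ combine into $\ve k_\Gamma\langle(h_S^1-h_S^2)-(h_P^1-h_P^2),\,\mathrm{sgn}_\delta^+(h_S^1-h_S^2)-\mathrm{sgn}_\delta^+(h_P^1-h_P^2)\rangle_{\Gamma_P^\ve}\ge0$ by monotonicity of $\mathrm{sgn}_\delta^+$, the transpiration terms on $\Gamma_{P,0}^\ve$ cancel, and the surface term $\langle f(h_S^1)-f(h_S^2),\,\mathrm{sgn}_\delta^+(h_S^1-h_S^2)\rangle_{\Gamma_{S,0}^\ve}\ge0$ precisely because $f$ is non-decreasing. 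Collecting these facts yields the contraction estimate above; together with its symmetric counterpart this gives $\theta_S^\ve(h_S^1)=\theta_S^\ve(h_S^2)$ and $\theta_P(h_P^1)=\theta_P(h_P^2)$, hence $h_S^1=h_S^2$, $h_P^1=h_P^2$ by strict monotonicity. The genuinely delicate point is the variable-doubling itself — the rigorous passage to the limits $n\to\infty$ and $\delta\to0$ in the doubled time-derivative and diffusion terms — the remainder being sign bookkeeping made possible by the monotonicity of $f$ and the structure of the root--soil coupling.
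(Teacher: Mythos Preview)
Your proposal is correct and follows essentially the same route as the paper: the $L^1$-contraction principle of Otto via Kru\v{z}kov time-doubling, with a regularised sign function, Lipschitz continuity of $K_R,K_B,K_P$ to kill the cross term in the flux, monotonicity of $\mathrm{sgn}_\delta^+$ for the $\Gamma_P^\ve$ coupling, and the non-decreasing assumption on $f$ for the surface term. The paper organises the time-doubling through an auxiliary lemma (the analogue of your mollifier step, phrased via Steklov averages and a convex primitive $\eta_{J,\delta}^\pm$), and it handles the diffusion cross term by Young's inequality to get an explicit $O(\delta)$ bound rather than your dominated-convergence argument on $\{0<h_S^1-h_S^2<\delta\}$; it also verifies $I\ge0$ for the interface term by a case analysis, whereas your one-line appeal to the monotonicity of $\mathrm{sgn}_\delta^+$ is cleaner and suffices.
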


The proof of Theorem~\ref{theorem_microscopic_models_uniqueness} employs the same method as  in~\citep{otto1996l1}. For this we first prove two inequalities.  Consider  $\sigma_{\delta}^+, \sigma_{\delta}^-:\mathbb{R}\to[0,1]$ and $\eta_{J, \delta}^+$,~$\eta_{J,\delta}^-:\mathbb{R}\to [0,\infty)$, for $\delta >0$,   given by
\begin{equation}\label{our_eta_prime}
\begin{aligned}
\sigma_{\delta}^+(z) = \begin{cases}
1 &\text{for }z\geq\delta,\\
\frac{z}{\delta} &\text{for }0<z<\delta,\\
0 &\text{for }z\leq0,
\end{cases}  \qquad \quad
\begin{aligned} &\sigma_{\delta}^-(z)=-\sigma_{\delta}^+(-z), \\
&\eta_{J,\delta}^\pm\big(h,v^0\big) = \int_{v^0}^h\sigma_\delta^\pm(z-v^0) {\theta}_J^{'}(z)dz,
\end{aligned}
\end{aligned}
\end{equation}
for $J=P,R,B$, where~$v^0\in V(S^\ve)$ if $J=R, B$ and~$v^0-a\in V(P^\ve)$ if $J=P$, and $\eta_{S,\delta}^\pm\big(h,v^0\big) = \eta_{R,\delta}^\pm\big(h,v^0\big)\chi_{R^\ve} + \eta_{B,\delta}^\pm\big(h,v^0\big)\chi_{B^\ve}$. Then for~$\kappa\in C^\infty_0(-\infty,T)$ and  for $v^0\in V(S^\ve)$ and~$w^0-a\in V(P^\ve)$ we have $\sigma_\delta^\pm(h_S-v^0)\kappa \in L^2(0,T;V(S^\ve))$ and $\sigma_\delta^\pm(h_P-w^0)\kappa\in L^2(0,T;V(P^\ve))$.

Similar as in the proof of Theorem~\ref{theorem_microscopic_models_existence}, we  omit the superscript $\ve$ in $h_S^\ve$ and $h^\ve_P$.

\begin{lemma}\label{lemma_uniqueness_lemma_1}
Let~$(h_S,h_P)$ be a solution to~\eqref{richards_soil}-\eqref{bcs_root}  and let $\kappa\in C^\infty_0(-\infty,T)$ be non-negative. Then  for  $v^0\in V(S^\ve)$, $w^0-a\in V(P^\ve)$,  and any~$\delta>0$,  we have
\begin{eqnarray}\label{inequality_uniqueness_lemma_1}
\int_0^T\!\! \Big[\Big\langle \eta_{S,\delta}^+\big(h_{S,0},v^0\big) -\eta_{S,\delta}^+(h_{S},v^0), \frac{d\kappa}{dt}\Big\rangle_{S^\ve}+
\Big\langle \eta_{P,\delta}^+\big(h_{P,0},w^0\big)-\eta_{P,\delta}^+(h_{P},w^0), \frac{d\kappa}{dt}\Big\rangle_{P^\ve} \nonumber\\
+ \Big(\big\langle K_S(h_S)(\nabla h_{S} + {e}_3),\! \nabla \sigma_\delta^+(h_S-v^0) \big \rangle_{S^\ve}\!\! + \big \langle H_r^\ve K_P(h_P)(\nabla h_{P} + {e}_3), \! \nabla \sigma_\delta^+(h_P-w^0)\big \rangle_{P^\ve} \nonumber \\
+ \ve   k_\Gamma  \big\langle h_S-h_P, \sigma_\delta^+(h_S-v^0) - \sigma_\delta^+(h_P-w^0) \big \rangle_{\Gamma_P^\ve} \nonumber  \\
+ \big\langle f(h_S), \sigma_\delta^+(h_S-v^0) \big\rangle_{\Gamma_{S,0}^\ve}
+ \big \langle \mathcal{T}_\text{pot},\sigma_\delta^+(h_P-w^0) \big\rangle_{\Gamma_{P,0}^\ve }\Big) \kappa (t) \Big]dt \leq 0.
\end{eqnarray}
The same inequality holds  with $\eta_{S,\delta}^-(h_{S},v^0)$ and  $\eta_{P,\delta}^-(h_{P},w^0)$ instead of $\eta_{S,\delta}^+(h_{S},v^0)$ and  $\eta_{P,\delta}^+(h_{P},w^0)$,  and   $\sigma_\delta^-(h_S-v^0)$ and  $\sigma_\delta^-(h_P-w^0)$ 	instead of $\sigma_\delta^+(h_S-v^0)$ and  $\sigma_\delta^+(h_P-w^0)$.
\end{lemma}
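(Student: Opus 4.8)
The plan is to test the weak formulations~\eqref{weak_formulation_soil} and~\eqref{weak_formulation_plant} with $\phi=\sigma_\delta^+(h_S-v^0)\kappa$ and $\psi=\sigma_\delta^+(h_P-w^0)\kappa$, respectively; these are admissible test functions, as noted just above, since $\sigma_\delta^+$ is Lipschitz with $\sigma_\delta^+(0)=0$ and $\kappa$ depends only on~$t$. Adding the two resulting identities and using $\nabla\phi=\kappa\,\nabla\sigma_\delta^+(h_S-v^0)$ together with the analogous relation for~$\psi$, every elliptic, interface and boundary term appears exactly as in~\eqref{inequality_uniqueness_lemma_1}; in particular the two coupling contributions on~$\Gamma_P^\ve$ combine into $\ve k_\Gamma\la h_S-h_P,\,\sigma_\delta^+(h_S-v^0)-\sigma_\delta^+(h_P-w^0)\ra_{\Gamma_P^\ve}\kappa$. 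It therefore remains only to rewrite the two time-derivative terms $\int_0^T\la\partial_t\theta_J(h_J),\sigma_\delta^+(h_J-v^0)\ra_{V(J^\ve)^\prime}\,\kappa\,dt$, $J=S,P$ (with $v^0$ replaced by $w^0$ when $J=P$).

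The core of the argument is a chain-rule / integration-by-parts formula for these degenerate terms. Since $\partial_h\eta_{J,\delta}^+(h,v^0)=\sigma_\delta^+(h-v^0)\theta_J^{\,\prime}(h)$, one expects $\partial_t\eta_{J,\delta}^+(h_J,v^0)=\sigma_\delta^+(h_J-v^0)\,\partial_t\theta_J(h_J)$, so that, integrating against~$\kappa$ and using $\kappa(T)=0$ together with the initial condition $h_J(0)=h_{J,0}$,
\begin{equation*}
\int_0^T \big\langle \partial_t\theta_J(h_J),\,\sigma_\delta^+(h_J-v^0)\big\rangle_{V(J^\ve)^\prime}\,\kappa\,dt \;\geq\; \int_0^T \big\langle \eta_{J,\delta}^+(h_{J,0},v^0)-\eta_{J,\delta}^+(h_J,v^0),\,\tfrac{d\kappa}{dt}\big\rangle_{J^\ve}\,dt.
\end{equation*}
Inserting this into the added identity yields precisely~\eqref{inequality_uniqueness_lemma_1}. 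Making this step rigorous is the main obstacle: we only know $\partial_t\theta_J(h_J)\in L^2(0,T;V(J^\ve)^\prime)$, not that $\partial_t h_J$ exists, so the chain rule cannot be applied pointwise. Following~\cite{alt1983quasilinear, Jaeger_Kacur_1995}, one regularises in time --- e.g.\ replacing $\theta_J(h_J)$ by a Steklov average, whose time derivative is a genuine $L^2$ function, proving the formula at that level, and then removing the regularisation. Here one has to verify that $\sigma_\delta^+(\,\cdot\,-v^0)$ is a non-decreasing, hence $\theta_J$-compatible, transformation so that the degenerate-parabolic integration-by-parts machinery applies, and to control the contribution at $t=0$; it is the lower semicontinuity inherent in this limit, and the way the initial datum enters when $\kappa(0)\neq0$, that give an inequality rather than an equality.

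The second statement follows from the same computation carried out with $\sigma_\delta^-(h_S-v^0)\kappa$ and $\sigma_\delta^-(h_P-w^0)\kappa$ as test functions: $\sigma_\delta^-$ is likewise Lipschitz with $\sigma_\delta^-(0)=0$, and $\partial_h\eta_{J,\delta}^-(h,v^0)=\sigma_\delta^-(h-v^0)\theta_J^{\,\prime}(h)$, so the chain-rule inequality above holds verbatim with $\eta_{J,\delta}^-$ and $\sigma_\delta^-$ in place of $\eta_{J,\delta}^+$ and $\sigma_\delta^+$, giving the claimed variant of~\eqref{inequality_uniqueness_lemma_1}.
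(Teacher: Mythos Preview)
Your proposal is correct and follows essentially the same approach as the paper: test with $\sigma_\delta^\pm(h_J-v^0)\kappa$, observe that all elliptic, interface and boundary terms match~\eqref{inequality_uniqueness_lemma_1}, and handle the time-derivative term by a Steklov-type regularisation combined with the convexity inequality $\eta_{J,\delta}^+(h_J(t),v^0)-\eta_{J,\delta}^+(h_J(t-\tau),v^0)\le \sigma_\delta^+(h_J(t)-v^0)\big[\theta_J(h_J(t))-\theta_J(h_J(t-\tau))\big]$, which is exactly the monotonicity fact you allude to. The only cosmetic difference is that the paper Steklov-averages the \emph{test function} $\zeta_\tau=\tfrac1\tau\int_t^{t+\tau}\sigma_\delta^+(h_J-v^0)\kappa\,ds$ rather than $\theta_J(h_J)$; this dual choice lets one write the dual pairing directly as a difference quotient of $\theta_J(h_J)$ and then invoke the convexity inequality before passing to the limit $\tau\to0$ via the Lebesgue differentiation theorem.
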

\begin{proof}
We prove~\eqref{inequality_uniqueness_lemma_1} and the proof for $\eta_{S,\delta}^-(h_{S},v^0)$, $\eta_{P,\delta}^-(h_{P},w^0)$,  $\sigma_\delta^-(h_S-v^0)$,  $\sigma_\delta^-(h_P-w^0)$ follows the same lines. Considering
\begin{equation}\label{def:zeta}
\zeta_\tau(t,x) = \frac{1}{\tau}\int_{t}^{t+\tau}\!\!\zeta(s,x)ds, \; \text{ where }  \;\zeta = \sigma_\delta^+ (h_S-v^0)\kappa\in L^2(0,T;V(S^\ve)),
\end{equation}
for~$t\in[0,T]$ and $\tau>0$,  and using $\kappa(T)=0$  we can write
\begin{equation}\label{eqn_ibp_argument}
\begin{aligned}
	\int_0^T\!\!\big\langle\partial_t {\theta}_S(h_{S}),\zeta_\tau\big\rangle_{V^\prime(S^\ve)} dt
				= \int_0^{T-\tau}\!\! \frac{1}{\tau}\big\langle {\theta}_S(h_{S,0})- {\theta}_S(h_{S}(t)),\zeta(t+\tau)\big\rangle_{S^\ve}dt
\\  - \int_0^T\!\!\frac{1}{\tau} \big\langle{\theta}_S(h_{S,0})- {\theta}_S(h_{S}(t)), \zeta(t)\big\rangle_{S^\ve} dt
\\  = \int_0^T\!\!\frac{1}{\tau}\big \langle {\theta}_S(h_S(t))- {\theta}_S(h_S(t-\tau)),\zeta(t) \rangle_{S^\ve}dt,
\end{aligned}
\end{equation}
where $ {\theta}_S(h_S)$ is extended  by $ {\theta}_S(h_S) =  {\theta}_S(h_{S,0})$ for $t<0$.
Since~$\theta_R, \theta_B$ are Lipschitz continuous and~$\theta_R, \theta_B$ and~$\sigma_\delta^+$ are monotone increasing, it follows that
		\begin{equation}\label{eqn3_uniqueness_lemma_1}
			\begin{aligned}
				\eta_{S,\delta}^+\big(h_S(t),v^0\big) - \eta_{S,\delta}^+\big(h_S(t-\tau),v^0\big)
				= \int_{h_S(t-\tau)}^{h_S(t)}\sigma_\delta^+(z-v^0) {\theta}_S^{'}(z)dz\\
				\leq
				\sigma_\delta^+\big(h_S(t)-v^0\big)\big[ {\theta}_S(h_S(t))- {\theta}_S(h_S(t-\tau))\big].
			\end{aligned}
		\end{equation}
Using \eqref{eqn3_uniqueness_lemma_1}  in  \eqref{eqn_ibp_argument}, together with the non-negativity of~$\kappa\in C^\infty_0(-\infty,T)$, yields
\begin{equation}\label{eqn_time_derivative_lower_bound_uniq_lemma_1}
\begin{aligned}
&\liminf_{\tau\to 0}\int_0^T\!\!\!\big\langle\partial_t {\theta}_S(h_{S}),\zeta_\tau\big\rangle_{V^\prime(S^\ve)} dt
\\
&\geq \liminf_{\tau\to 0} \int_0^T \frac{1}{\tau} \big\langle\eta_{S,\delta}^+(h_S(t),v^0) - \eta_{S,\delta}^+(h_S(t-\tau),v^0), \kappa(t) \big \rangle_{S^\ve} dt
\\
&= \liminf_{\tau\to 0}	\int_0^T\big\langle \eta_{S,\delta}^+(h_{S,0},v^0) - \eta_{S,\delta}^+(h_S(t),v^0), \frac{1}{\tau}\big(\kappa(t+\tau)-\kappa(t)\big) \big \rangle_{S^\ve}dt \\
& = \int_0^T \!\!\!\Big \langle\eta_{S,\delta}^+(h_{S,0},v^0) - \eta_{S,\delta}^+(h_S(t),v^0), \frac{d \kappa}{dt}\Big \rangle_{S^\ve} dt.
\end{aligned}
\end{equation}
The same calculations hold for $\partial_t \theta_P(h_P)$ and $\eta_{P,\delta}^+(h_P, w^0)$, considering $\zeta = \sigma_\delta^+ (h_P-w^0) \kappa$ in the definition of $\zeta_\tau$ in \eqref{def:zeta}. Using the Cauchy-Schwarz inequality,  change in the order of integration and that $\kappa\in C^\infty_0(-\infty,T)$, we obtain $\zeta_\tau$, $\nabla\zeta_\tau, \partial_t \zeta_\tau \in  L^2((0,T)\times J^\ve)$ for $J=S,P$. The Lebesgue differentiation theorem and  the regularity of $h_S \in  L^2(0,T;V(S^\ve))$, $h_P -a \in  L^2(0,T;V(P^\ve))$,  $v^0 \in V(S^\ve)$, and  $w^0 -a \in V(P^\ve)$,  imply $\zeta_\tau \to \zeta$ in $L^2(0,T;V(J^\ve))$ and, by applying the trace theorem, also in $L^2((0,T)\times\Gamma^\ve_{P})$ and $L^2((0,T)\times\Gamma^\ve_{J,0})$, for $J=S,P$, as~$\tau\to0$. Hence, testing~\eqref{weak_formulation_soil} and \eqref{weak_formulation_plant} with the corresponding $\zeta_\tau$ and  taking the limit as $\tau \to 0$ yield the inequality stated in the lemma.
\end{proof}

\begin{remark}
 Notice that in the proof of Theorem~\ref{theorem_microscopic_models_uniqueness} we  use the inequalities in Lemma~\ref{lemma_uniqueness_lemma_1}  for  $\kappa\in C^\infty_0(0,T)$, yielding
 $$
 \int_0^T\!\! \Big\langle \eta_{S,\delta}^+\big(h_{S,0},v^0\big), \frac{d\kappa}{dt}\Big\rangle_{S^\ve} =  \Big\langle \eta_{S,\delta}^+\big(h_{S,0},v^0\big), \kappa(T) - \kappa(0) \Big\rangle_{S^\ve}=0.
 $$
 The same holds   for   terms involving $\eta_{P,\delta}^+\big(h_{P,0},w^0\big)$, $\eta_{S,\delta}^-\big(h_{S,0},v^0\big)$, and $\eta_{P,\delta}^-\big(h_{P,0},w^0\big)$.
\end{remark}

\begin{proof}[Proof of Theorem~\ref{theorem_microscopic_models_uniqueness}]
	To prove the uniqueness result, assume there are two solutions~$(h_{S,1},h_{P,1})$ and~$(h_{S,2},h_{P,2})$,  consider a doubling of the time variable $(t_1,t_2)\in(0,T)^2$ and define functions~$h_{J,1}, h_{J,2}: (0,T)^2\times J^\ve \to \mathbb{R}$ such that
$$
				h_{J,1}(x,t_1,t_2) = h_{J,1}(x,t_1)\quad\text{and}\quad h_{J,2}(x,t_1,t_2) = h_{J,2}(x,t_2), \quad \text{ for } \; J=S,P.
$$
Considering \eqref{inequality_uniqueness_lemma_1} with  $v^0=h_{S,2}(t_2)$,~$w^0=h_{P,2}(t_2)$, and~$\kappa(t_2):t_1\to\kappa(t_1,t_2)$, for non-negative  $\kappa\in C_0^\infty((0,T)^2)$, we obtain
\begin{equation}\label{inequality_t2_constant_uniqueness_proof}
\begin{aligned}
& \int_0^T\!\!\Big[-\Big(\int_{S^\ve} \!\! \eta_{S,\delta}^+(h_{S,1},h_{S,2}(t_2))dx + \int_{P^\ve}\!\!\eta_{P,\delta}^+(h_{P,1},h_{P,2}(t_2)) dx \Big)\frac{d\kappa(t_2)}{dt_1} \\
				& \qquad +
				\big[\big \langle K_S(h_{S,1})\big(\nabla h_{S,1} + {e}_3\big),
				\nabla \sigma_\delta^+(h_{S,1}-h_{S,2}(t_2)) \big\rangle_{S^\ve} \\
				& \qquad \quad +
				\big \langle H_r^\ve K_P(h_{P,1})\big(\nabla h_{P,1} + {e}_3\big),
				\nabla \sigma_\delta^+(h_{P,1}-h_{P,2}(t_2)) \big\rangle_{P^\ve} \big]\kappa(t_2)\\
				&  +
				\ve  k_\Gamma  \big \langle h_{S,1}-h_{P,1},
				\sigma_\delta^+(h_{S,1}-h_{S,2}(t_2))-
				\sigma_\delta^+(h_{P,1}-h_{P,2}(t_2)) \big\rangle_{\Gamma_P^\ve} \kappa(t_2)\\
				& \qquad +
				\big[\big \langle f(h_{S,1}),\sigma_\delta^+(h_{S,1}-h_{S,2}(t_2))\big\rangle_{\Gamma^\ve_{S,0}}
				 \\ & \qquad \qquad +
				\big \langle\mathcal{T}_\text{pot},\sigma_\delta^+(h_{P,1}-h_{P,2}(t_2)) \big \rangle_{\Gamma^\ve_{P,0}} \big]\kappa(t_2) \Big ] dt_1
				\leq 0,			
			\end{aligned}
		\end{equation}
for a.e.~$t_2\in(0,T)$, whereas  considering   $v^0=h_{S,1}(t_1)$, $w^0=~h_{P,1}(t_1)$, and~$\kappa(t_1):t_2\to\kappa(t_1,t_2)$ in the inequality with $\sigma_\delta^-$ and  $\eta^-_{J, \delta}$, for $J=S,P$,    yields
\begin{equation}\label{inequality_t1_constant_uniqueness_proof}
\begin{aligned}
&\int_0^T\!\!\Big[-\Big(\int_{S^\ve}\!\!\eta_{S,\delta}^-(h_{S,2},h_{S,1}(t_1)) dx  + \int_{P^\ve}\!\!\eta_{P,\delta}^-(h_{P,2},h_{P,1}(t_1)) dx \Big) \frac{d\kappa(t_1)}{dt_2}
\\&
\qquad +  \big[ \big \langle  K_S(h_{S,2})\big(\nabla h_{S,2} + {e}_3\big),\nabla\sigma_\delta^-(h_{S,2}-h_{S,1}(t_1)) \big \rangle_{S^\ve}
\\& \qquad \quad  +  \big \langle H_r^\ve  K_P(h_{P,2})\big(\nabla h_{P,2} + {e}_3\big),\nabla\sigma_\delta^-(h_{P,2}-h_{P,1}(t_1)) \big \rangle_{P^\ve} \big]\, \kappa(t_1)\\
&  + \ve k_\Gamma  \big\langle h_{S,2}-h_{P,2}, \sigma_\delta^-(h_{S,2} - h_{S,1}(t_1))-\sigma_\delta^-(h_{P,2} -h_{P,1}(t_1)) \big\rangle_{\Gamma_P^\ve} \kappa(t_1)\\
&\qquad  + \big[\big\langle f(h_{S,2}), \sigma_\delta^-(h_{S,2} -h_{S,1}(t_1)) \big\rangle_{\Gamma^\ve_{S,0}}
\\& \qquad \qquad + \big\langle\mathcal{T}_\text{pot},\sigma_\delta^-(h_{P,2}-h_{P,1}(t_1)) \big\rangle_{\Gamma^\ve_{P,0}}\big]\kappa(t_1) \Big] dt_2 \leq 0,
\end{aligned}
\end{equation}
for a.e.~$t_1\in(0,T)$. Integrating~\eqref{inequality_t2_constant_uniqueness_proof} with respect to~$t_2$ and~\eqref{inequality_t1_constant_uniqueness_proof} with respect to~$t_1$, adding the resultant inequalities and using~$\sigma_{\delta}^-(z)=-\sigma_{\delta}^+(-z)$, we obtain
\begin{eqnarray}\label{uniqueness_inequality_doubled_time}
-\!\! \sum_{J=S,P}\!\!\!\big\langle \eta_{J,\delta}^+(h_{J,1},h_{J,2})\partial_{t_1}\kappa + \eta_{J,\delta}^-(h_{J,2},h_{J,1})\partial_{t_2}\kappa, 1 \big\rangle_{Q_J} \nonumber \\
+ \big\langle K_S(h_{S,1})\big(\nabla h_{S,1} + {e}_3\big) - K_S(h_{S,2})\big(\nabla h_{S,2} + {e}_3\big),   \nabla \sigma_\delta^+(h_{S,1}- h_{S,2})\kappa \big\rangle_{Q_S}\nonumber\\
	+ \big\langle H_r^\ve \big[K_P(h_{P,1})(\nabla h_{P,1} + {e}_3) - K_P(h_{P,2})(\nabla h_{P,2} + {e}_3)\big],  \nabla \sigma_\delta^+(h_{P,1}-h_{P,2})\kappa\big \rangle_{Q_P} \nonumber\\
+ \ve k_\Gamma \big \langle (h_{S,1}-h_{S,2})-(h_{P,1}-h_{P,2}), \big[\sigma_\delta^+(h_{S,1}-h_{S,2}) -\sigma_\delta^+(h_{P,1}-h_{P,2})\big]\kappa \big\rangle_{\tilde \Gamma_P^\ve} \nonumber\\
+ \big \langle f(h_{S,1})-f(h_{S,2}), \sigma_\delta^+(h_{S,1}-h_{S,2})\kappa \big \rangle_{(0,T)^2 \times \Gamma^\ve_{S,0}}\leq 0,
\end{eqnarray}
where $Q_S = (0,T)^2 \times S^\ve$,  $Q_P = (0,T)^2 \times P^\ve$, and $\tilde \Gamma_P^\ve = (0,T)^2 \times \Gamma_P^\ve$.
Using the definition of $\sigma^+_\delta$ we have
\begin{equation}\label{inequality_uniqueness_lower_bound_flux_term_double_time}
\begin{aligned}
&\big[K_S(h_{S,1})(\nabla h_{S,1} + {e}_3)-K_S(h_{S,2})(\nabla h_{S,2} + {e}_3)\big]\cdot \nabla\sigma_\delta^+(h_{S,1}-h_{S,2})\\
&\quad \geq (K_S(h_{S,1}) - \varsigma)  \big|\nabla (h_{S,1}-h_{S,2})\big|^2 \big(\sigma_\delta^+(h_{S,1}-h_{S,2})\big)^\prime \\
& \qquad - \frac 1 {2\varsigma} \big[1+|\nabla h_{S,2}|^2\big] \big| K_S(h_{S,1})-K_S(h_{S,2})\big|^2\ \big(\sigma_\delta^+(h_{S,1}-h_{S,2})\big)^\prime,
\end{aligned}
\end{equation}
where $\big(\sigma_\delta^+\big)'$ is non-negative and  singular as~$\delta\downarrow 0$.  The first term  on the right-hand side of~\eqref{inequality_uniqueness_lower_bound_flux_term_double_time}   is non-negative for  $0<\varsigma\leq  \min\{ K_{R,0}, K_{B,0}\}$  and the second term  is  non-zero only  if $0<h_{S,1}-h_{S,2}<\delta$  with
$$
\big| K_S(h_{S,1})-K_S(h_{S,2})\big|^2\big(\sigma_\delta^+(h_{S,1}- h_{S,2})\big)^\prime = \frac{| K_S(h_{S,1})-K_S(h_{S,2})|^2}{\delta} \leq L^2_{K_S} \delta,
$$
where $L_{K_S} = \max \{L_{K_R}, L_{K_B}\}$ and $L_{K_J}$ is the Lipschitz constant for $K_J$, with $J=R,B$.
A similar estimate holds for $H^\ve_r[K_P(h_{P,1})(\nabla h_{P,1} + {e}_3) - K_P(h_{P,2})(\nabla h_{P,2} + {e}_3)] \cdot  \nabla \sigma_\delta^+(h_{P,1}-h_{P,2})$.  The definition of $\sigma^+_\delta$ yields
\begin{equation*}
\begin{aligned}
&\big[(h_{S,1}-h_{S,2})-(h_{P,1}-h_{P,2})\big]\big[\sigma_\delta^+(h_{S,1}-h_{S,2}) - \sigma_\delta^+(h_{P,1}-h_{P,2})\big]\\
&\quad  \geq \big[(h_{S,1}-h_{S,2})^+-(h_{P,1}-h_{P,2})^+\big]\big[\sigma_\delta^+(h_{S,1}- h_{S,2}) - \sigma_\delta^+(h_{P,1}-h_{P,2})\big] = I,
\end{aligned}
\end{equation*}
where $u^+ = \max\{ u, 0 \}$.
When  $h_{S,1}-h_{S,2}\leq0$, $h_{P,1}-h_{P,2}\leq0$,  or $h_{S,1} - h_{S,2} \geq \delta$ and $ h_{P,1}- h_{P,2} \geq \delta$, we directly have  $I\geq 0$. If    $h_{S,1}-h_{S,2} \geq \delta$ and  $0<h_{P,1}-h_{P,2} < \delta$, then $\sigma_\delta^+(h_{S,1}-h_{S,2}) = 1$ and $ \sigma_\delta^+(h_{P,1}-h_{P,2}) < 1$, resulting in
$$
I \geq \big[(h_{S,1}-h_{S,2})^+-(h_{P,1}-h_{P,2})^+\big] - \big[(h_{S,1}-h_{S,2})^+-(h_{P,1}-h_{P,2})^+\big]= 0.
$$
In the case where  $\delta > h_{S,1}-h_{S,2} > h_{P,1}-h_{P,2} > 0$ we have $\sigma_\delta^+(h_{S,1}- h_{S,2}) > \sigma_\delta^+(h_{P,1} -h_{P,2})$ and hence $I\geq 0$. Analogous arguments imply  $I\geq  0$  when $h_{P,1}-h_{P,2} > h_{S,1}-h_{S,2} > 0$.

Using \eqref{uniqueness_inequality_doubled_time}, the estimates above, and the fact that $f$ is non-decreasing, yields
\begin{equation}\label{uniqueness_inequality2_doubled_time}
\begin{aligned}
- \sum_{J=S,P}\Big[\big \langle\eta_{J,\delta}^+(h_{J,1},h_{J,2})\partial_{t_1}\kappa + \eta_{J,\delta}^-(h_{J,2},h_{J,1})\partial_{t_2}\kappa, 1 \big \rangle_{Q_J}
\\ + \delta C \big(1+\|\nabla h_{J,2}\,  \kappa\|^2_{L^2(Q_J)} \big)\Big]& \leq 0.
\end{aligned}
\end{equation}
The monotonicity of~$ {\theta}_S$ and $\sigma_\delta^+$  and the non-negativity of~$\sigma_\delta^+$ ensure
\begin{equation*}
\Big\|\int_{h_{S,2}}^{h_{S,1}}\!\!\! \!\! \sigma_\delta^+(z-h_{S,2}) {\theta}_S^{'}(z)dz\Big\|^p_{L^p(Q_S)}
\!\!\! \leq \big\|\sigma_\delta^+(h_{S,1}-h_{S,2})( {\theta}_S(h_{S,1}) -  {\theta}_S(h_{S,2}))\big\|^p_{L^p(Q_S)},
\end{equation*}
for $1<p<\infty$. A similar estimate is obtained for $\sigma^{-}_\delta$ by using  that~$\sigma^-_\delta(z)=-\sigma^+_\delta(-z)$.  The boundedness of $\theta_S$ and $\sigma^+_\delta$ implies  $ \eta_{S,\delta}^+(h_{S,1},h_{S,2})$ and  $\eta_{S,\delta}^-(h_{S,2},h_{S,1})$ are uniformly bounded  in $L^p(Q_S)$ and, since~$0\leq \sigma_\delta^+\leq 1$, we have
$$
|\eta_{S,\delta}^+(h_{S,1},h_{S,2})| \leq ( {\theta}_S(h_{S,1}) -  {\theta}_S(h_{S,2}))^+, \; \; |\eta_{S,\delta}^-(h_{S,2},h_{S,1})| \leq ( {\theta}_S(h_{S,1}) -  {\theta}_S(h_{S,2}))^+.
$$
Additionally, with $(t_1, t_2, x) \in Q_S$, we have $({\theta}_S(h_{S,1}(t_1,x)) -  {\theta}_S(h_{S,2}(t_2,x)))^+ \!=\! 0$ if   $h_{S,1}(t_1,x)\leq h_{S,2}(t_2,x)$, and $\sigma_\delta^+(z-h_{S,2}(t_2,x)) = 0$ for  $z\in\big(h_{S,1}(t_1,x),h_{S,2}(t_2,x)\big)$, $ \sigma_\delta^+(z - h_{S,2}(t_2,x)) = 1$  for  $z\geq h_{S,2}+\delta(t_2, z)$. Then the Lipschitz continuity of ${\theta}_S$ implies
\begin{equation*} 
\begin{aligned}
\big|\eta_{S,\delta}^+(h_{S,1}(t_1,x),h_{S,2}(t_2,x))-\big( {\theta}_S(h_{S,1}(t_1,x)) -  {\theta}_S(h_{S,2}(t_2,x))\big)^+\big|\\
\quad \leq L_{ {\theta}_S}	\Big|\int_{h_{S,2}(t_2,x)}^{h_{S,1}(t_1,x)}\big[\sigma_\delta^+(z -h_{S,2}(t_2,x))-1 \big] dz\Big| & \leq L_{ {\theta}_S} \delta.
\end{aligned}
\end{equation*}
Using similar arguments for $h_{P,1}$ and $h_{P,2}$ and the fact that~$\sigma^-(z)=-\sigma^+(-z)$ yields
\begin{equation}\label{eta_plus_convergence_ae}
\begin{aligned}
&  \eta_{J,\delta}^+(h_{J,1},h_{J,2})\to \big( {\theta}_J(h_{J,1}) -  {\theta}_J(h_{J,2})\big)^+, \\
&   \eta_{J,\delta}^-(h_{J,2},h_{J,1})\to \big( {\theta}_J(h_{J,1}) -  {\theta}_J(h_{J,2})\big)^+,
\end{aligned}
\end{equation}
pointwise a.e.~in $Q_J$ as $\delta \to 0$, for $J=S,P$. The dominated convergence theorem implies the strong convergence of $\eta_{J,\delta}^+(h_{J,1},h_{J,2})$ and $\eta_{J,\delta}^-(h_{J,2},h_{J,1})$
in $L^p((0,T)^2\times J^\ve)$, as~$\delta\to0$, for $J=S,P$ and $1<p< \infty$. Taking in~\eqref{uniqueness_inequality2_doubled_time} the limits as $\delta\to 0$  we obtain
\begin{equation}\label{uniqueness_inequality3_doubled_time}
-\Big\langle \! \sum_{J=S,P}\int_{J^\ve}\!\!\big( {\theta}_J(h_{J,1}) -  {\theta}_J(h_{J,2})\big)^+ dx,
\partial_{t_1}\kappa + \partial_{t_2}\kappa \Big\rangle_{(0,T)^2} \leq 0.
\end{equation}
For any non-negative~$\kappa\in C_0^\infty(0,T)$,  extended by zero for $t\leq 0$ and $t\geq T$,   there exists~$\varrho^*\in(0, T/2)$ such that $\kappa(t) = 0$ if  $0<t<\varrho^*$ or $t>T-\varrho^*$. For a non-negative~$\vartheta\in C_0^\infty(\mathbb{R})$ of unit mass we have that $\vartheta(r/{\varrho}) = 0$, for $\lvert r\rvert\geq\varrho$ and some $\varrho>0$, and  the function
\begin{equation}\label{kappa_varrho}
\kappa_{\varrho}(t_1,t_2):=\frac{1}{\varrho}\vartheta\Big(\frac{t_1-t_2}{\varrho}\Big)\kappa\Big(\frac{t_1 + t_2}{2}\Big),
\end{equation}
is admissible in~\eqref{uniqueness_inequality3_doubled_time}, for $\varrho<2\varrho^*$.
Applying the change of variables~$\tau = t_1-t_2$ and denoting $t=t_1$, yield
\begin{equation}\label{uniqueness_inequality4_doubled_time_changed_variable}
- \int_{\mathbb{R}}\! \frac{1}{\varrho}\vartheta\Big(\frac{\tau}{\varrho}\Big)\int_{0}^T\! \!\!\sum_{J=S,P}\int_{J^\ve}\!\!\!\big( {\theta}_J(h_{J,1}) -  {\theta}_J(h_{J,2}^\tau)\big)^+ dx \, \partial_t\kappa^{\frac \tau 2} \, dtd\tau \leq 0,
\end{equation}
with the abbreviation~$h_J^\tau(t) = h_J(t-\tau)$ for $J=S,P$. To take the limit  as~$\tau\to0$, we first show $ {\theta}_J(h_{J,2}^\tau)\to {\theta}_J(h_{J,2})$ strongly in~$L^2((0,T)\times J^\ve)$, where $J=S,P$. Assume~$\tau>0$ and consider
$$\zeta(t):=(h_{S,2}(t)-h_{S,2}(t-\tau))\chi_{[0,T)}(t), \quad \text{ and } \quad
\zeta_\tau(t):=\frac{1}{\tau}\int_t^{t+\tau}\zeta(s)ds.
$$
The regularity of $h_{S,2}$ ensures that $\zeta_\tau$ is an admissible test function in~\eqref{weak_formulation_soil}.
Using an integration by parts  and the regularity of~$\zeta_\tau$, we obtain
\begin{equation*}
\big\langle\partial_t {\theta}_S(h_{S,2}), \zeta_\tau\big\rangle_{V(S^\ve)^\prime, T}
= \Big\langle\frac{1}{\tau}\big( {\theta}_S(h_{S,2})- {\theta}_S(h_{S,2}(\cdot-\tau))\big),h_{S,2}-h_{S,2}(\cdot-\tau)\Big\rangle_{S^\ve_T},
\end{equation*}
where we used that $ {\theta}_S(h_{S,2}(t))= {\theta}_S(h_{S,0})$ for~$t<0$ and $\chi_{[0,T)}(t)=0$ for~$t\geq T$.	An analogous argument holds also for~$\tau<0$. Thus from equation~\eqref{weak_formulation_soil} we have
\begin{equation}\label{uniqueness_inequality_for_convergence_of_h_minus_tau}
\begin{aligned}
&\big \langle  {\theta}_S(h_{S,2})- {\theta}_S(h_{S,2}^\tau), h_{S,2}-h_{S,2}^\tau \rangle_{S^\ve_T} \leq
\tau\Big[\big| \big \langle K_S(h_{S,2})(\nabla h_{S,2} + {e}_3), \nabla \zeta_\tau \big \rangle_{S^\ve_T}\big| \\
& + \ve \, k_\Gamma \, \big( \big|\langle h_{S,2}, \zeta_\tau \rangle_{\Gamma^\ve_{P,T}} \big| +
\big|\langle  h_{P,2}, \zeta_\tau\rangle_{\Gamma^\ve_{P,T} }\big| \big) +
\big | \big \langle f(h_{S,2}), \zeta_\tau \big \rangle_{(0,T)\times \Gamma^\ve_{S,0}} \big|\Big].
\end{aligned}
\end{equation}
By the Lebesgue differentiation theorem, we have $\zeta_\tau \to \zeta$ strongly in $L^2(0,T;V(S^\ve))$, and by the trace theorem, also  in $L^2((0,T)\times\Gamma^\ve_{P})$ and $L^2((0,T)\times\Gamma^\ve_{S,0})$.
Hence  the right hand side of~\eqref{uniqueness_inequality_for_convergence_of_h_minus_tau}	converges to $0$ as~$\tau\to0$ and from the Lipschitz continuity and monotonicity of~$ {\theta}_S$ we obtain
\begin{equation}\label{uniqueness_strong_convergence_h_tau_h_soil}
\big\| {\theta}_S(h_{S,2})- {\theta}_S(h_{S,2}^\tau)\big\|^2_{L^2((0,T)\times S^\ve)} \!\! \leq
C\big\langle \theta_S(h_{S,2})- \theta_S(h_{S,2}^\tau), h_{S,2}-h_{S,2}^\tau \big \rangle_{S^\ve_T} \!\! \to 0,
\end{equation}
as $\tau \to 0$. An identical argument is employed to show
\begin{equation}\label{uniqueness_strong_convergence_h_tau_h_root}
\big\| {\theta}_P(h_{P,2})- {\theta}_P(h_{P,2}^\tau)\big\|_{L^2((0,T)\times P^\ve)} \to  0 \quad \text{ as } \tau  \to 0.
\end{equation}
Combining~\eqref{uniqueness_strong_convergence_h_tau_h_soil} and~\eqref{uniqueness_strong_convergence_h_tau_h_root} with the continuity of~$\partial_t\kappa$ and taking~$\tau \to0$ in~\eqref{uniqueness_inequality4_doubled_time_changed_variable} imply
\begin{equation}\label{uniqueness_inequality5}
- \int_{0}^T  \sum_{J=S,P}\int_{J^\ve}\big( {\theta}_J(h_{J,1}) -  {\theta}_J(h_{J,2})\big)^+ dx \, \partial_t\kappa(t) \, dt \leq 0.
\end{equation}
Applying integration by parts and using  that~$\kappa$ is compactly supported  yield
\begin{equation*}
\int_{0}^T\!\!\!\kappa(t)\,  \partial_t\Big[\int_{S^\ve} \!\!\big( {\theta}_S(h_{S,1}) -  {\theta}_S(h_{S,2})\big)^+ dx
+ \int_{P^\ve} \!\!\big( {\theta}_P(h_{P,1}) -  {\theta}_P(h_{P,2})\big)^+ dx\Big]dt \leq 0,
\end{equation*}
and, since this holds for all non-negative~$\kappa\in C_0^\infty(0,T)$, it follows that
\begin{equation*}
\begin{aligned}
& \sum_{J=S,P}\int_{J^\ve}\!\!\big( \theta_J(h_{J,1}(t,x)) -  \theta_J(h_{J,2}(t,x))\big)^+ dx
\\
& \qquad \leq \sum_{J=S,P}\int_{J^\ve}\!\!\big( \theta_J(h_{J,1}(0,x)) -  \theta_J(h_{J,2}(0,x))\big)^+ dx =0,
\end{aligned}
\end{equation*}
for a.e.~$t\in(0,T)$ and  $\big( {\theta}_J(h_{J,1}) -  {\theta}_J(h_{J,2})\big)^+=0$ a.e.~in~$(0,T)\times J^\ve$, for $J=S,P$.
Using~\eqref{inequality_uniqueness_lemma_1}  with $v^0=h_{S,1}(t_1),w^0=h_{P,1}(t_1)$,  and~$\kappa(t_1)$, and the corresponding inequality for $\sigma_\delta^-$ and $\eta^-_{J, \delta}$  with~$v^0=h_{S,2}(t_2),w^0=h_{P,2}(t_2)$ and~$\kappa(t_2)$ and performing the same calculations as above yield $\big( \theta_J(h_{J,2}) -  \theta_J(h_{J,1})\big)^+=0$  a.e.~in~$(0,T)\times J^\ve$, for $J=S,P$. Thus, since ${\theta}_S$ and ${\theta}_P$ are strictly increasing, we obtain the uniqueness of~$h_S$ and~$h_P$.
\end{proof}

\begin{proposition}\label{proposition_non-positivity proof}
Under Assumption~\ref{assumption} and additionally if~$K_J(z)= K_{J}(0)$ for~$z\geq0$, where $J=R,B,P$,   and $f(z)\geq 0$ for~$z\geq 0$,   solutions of~\eqref{richards_soil}--\eqref{bcs_root}  are non-positive.
\end{proposition}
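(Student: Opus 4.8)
The plan is to use the test function $\sigma_\delta^+(h_S^\ve)$ in \eqref{weak_formulation_soil} and $\sigma_\delta^+(h_P^\ve-a)$ in \eqref{weak_formulation_plant} (recall $a\le 0$, so $h_P^\ve-a$ measures the positive excursion of $h_P^\ve$ above the level $a$, and since we want to show $h_P^\ve\le 0$ it is convenient first to work with the level $a$ and then sharpen), integrate over $(0,\tau)$, and exploit the structure established in Lemma~\ref{lemma_uniqueness_lemma_1} with the ``$0$-comparison function'' $v^0\equiv 0$, $w^0\equiv a$. More precisely, I would mimic the time-discretisation argument behind Lemma~\ref{lemma_uniqueness_lemma_1}: the term $\langle\partial_t\theta_S^\ve(h_S^\ve),\sigma_\delta^+(h_S^\ve)\kappa\rangle$ is bounded below, up to terms controlled by the initial data, by $\frac{d}{dt}\int_{S^\ve}\eta_{S,\delta}^+(h_S^\ve,0)\,dx$ against $\kappa$, using inequality \eqref{rslt_bg_sml_theta}-type monotonicity. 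Passing $\delta\to 0$, $\eta_{S,\delta}^+(h_S^\ve,0)\to(\theta_S^\ve(h_S^\ve))^+ = \theta_S^\ve((h_S^\ve)^+)$ (here $\theta_S^\ve(0)=0$ by (A1) is essential), so the time-derivative contributes $\frac{d}{dt}\int_{S^\ve}\theta_S^\ve((h_S^\ve)^+)\,dx$, which is non-negative integrated against suitable $\kappa$ and vanishes at $t=0$ because $h_{S,0}\le 0$.

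The key point is that all the remaining terms have a favourable sign \emph{on the set where the truncation is active}. The elliptic term gives $\langle K_S^\ve(h_S^\ve)(\nabla h_S^\ve+e_3),\nabla\sigma_\delta^+(h_S^\ve)\rangle_{S^\ve}$; since $(\sigma_\delta^+)'\ge0$ and is supported on $\{0<h_S^\ve<\delta\}$, on that set $K_S^\ve(h_S^\ve)=K_{J,\mathrm{sat}}$ by the new hypothesis, so this term equals $\int K_{J,\mathrm{sat}}(\sigma_\delta^+)'(h_S^\ve)(|\nabla h_S^\ve|^2+\partial_{x_3}h_S^\ve)$; the gradient-squared part is $\ge0$, and the $\partial_{x_3}h_S^\ve$ part, being an integral of $\frac{d}{dx_3}\Sigma_\delta(h_S^\ve)$ with $\Sigma_\delta'=K_{\mathrm{sat}}\sigma_\delta^+$, is a boundary term that is $O(\delta)$ and disappears in the limit (here one uses $h_S^\ve=0$ on $\Gamma^\ve_{S,L_3}$ and $\sigma_\delta^+(0)=0$). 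For the root equation the anisotropic $I_\ve K_P$ is handled identically since $\kappa_P(h_P^\ve)$ is also saturated-constant for $h_P^\ve-a\ge0$ near the threshold (one needs the hypothesis phrased for the relevant range; if only $K_J(z)=K_{J,\mathrm{sat}}$ for $z\ge0$ is assumed then the argument gives $h_P^\ve\le 0$ directly rather than $\le a$, which is what we want anyway). The exchange term $\ve k_\Gamma\langle h_S^\ve-h_P^\ve,\sigma_\delta^+(h_S^\ve)-\sigma_\delta^+(h_P^\ve)\rangle_{\Gamma_P^\ve}$ is $\ge0$ by monotonicity of $\sigma_\delta^+$ — exactly the computation of the quantity ``$I$'' in the uniqueness proof. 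The Neumann data contribute $\langle f(h_S^\ve),\sigma_\delta^+(h_S^\ve)\rangle_{\Gamma^\ve_{S,0}}$, which is $\ge0$ because $f(z)\ge0$ for $z\ge0$ and $\sigma_\delta^+(h_S^\ve)=0$ where $h_S^\ve\le0$; and $\langle\mathcal T_{\mathrm{pot}},\sigma_\delta^+(h_P^\ve-a)\rangle_{\Gamma^\ve_{P,0}}\ge0$ since $\mathcal T_{\mathrm{pot}}\ge0$ and $\sigma_\delta^+\ge0$.

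Collecting these, after $\delta\to0$ one arrives at an inequality of the form $-\int_0^T\!\!\big(\sum_{J}\int_{J^\ve}(\theta_J(\cdot))^+\,dx\big)\,\partial_t\kappa\,dt\le 0$ for all non-negative $\kappa\in C_0^\infty(0,T)$ (plus a non-negative contribution from initial data that is $0$ since $h_{S,0},h_{P,0}-a\le0$), which by the same integration-by-parts/density argument used at the end of the proof of Theorem~\ref{theorem_microscopic_models_uniqueness} forces $\int_{J^\ve}(\theta_J(h_J^\ve(t)))^+\,dx\le\int_{J^\ve}(\theta_J(h_{J,0}))^+\,dx=0$ for a.e.\ $t$. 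Since each $\theta_J$ is strictly increasing with $\theta_J(0)=0$, $(\theta_J(h_J^\ve))^+=0$ is equivalent to $h_J^\ve\le0$ a.e., giving the claim.

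I expect the main obstacle to be the rigorous justification of the time-derivative lower bound, i.e.\ reproducing for the test functions $\sigma_\delta^+(h_S^\ve)\kappa$ and $\sigma_\delta^+(h_P^\ve-a)\kappa$ the regularisation-in-time argument of Lemma~\ref{lemma_uniqueness_lemma_1} (the Steklov-average $\zeta_\tau$ construction and the passage $\tau\to0$), so that $\liminf_{\tau\to0}\langle\partial_t\theta_J(h_J^\ve),\zeta_\tau\rangle\ge\int\langle\eta_{J,\delta}^+(h_{J,0},\cdot)-\eta_{J,\delta}^+(h_J^\ve,\cdot),\partial_t\kappa\rangle$; once this is in place together with the sign analysis above, the rest is routine. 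A minor technical care is needed in handling the $e_3$-drift term in the elliptic part uniformly in $\delta$ via an antiderivative of $K_{\mathrm{sat}}\sigma_\delta^+$, and in making sure the saturation hypothesis on $K_J$ is invoked only on the active set $\{0<h_J^\ve<\delta\}$ so that no regularity of $K_J$ beyond continuity is used.
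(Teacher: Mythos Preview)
Your approach is valid but takes a different route from the paper. The paper tests directly with $h_S^+$ and $h_P^+$ (the latter is admissible since $h_P^\ve = a \le 0$ on $\Gamma^\ve_{P,L_3}$), handles the time derivative via the primitive $\Psi_J(h) = \int_0^h \theta_J'(z)z^+\,dz$ and the elementary inequality $\Psi_J(h_J(s)) - \Psi_J(h_J(s-\tau)) \le (\theta_J(h_J(s))-\theta_J(h_J(s-\tau)))h_J^+(s)$, and for the $e_3$-drift observes that $K_J(h_J)\partial_{x_3}h_J^+ = K_{J,\text{sat}}\partial_{x_3}h_J^+$ everywhere, giving the non-negative boundary term $K_{J,\text{sat}}\int_{\Gamma^\ve_{J,0}}h_J^+$. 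No $\delta$-regularisation, no limit. Your route through $\sigma_\delta^+$ and the machinery of Lemma~\ref{lemma_uniqueness_lemma_1} works too once you commit to level $0$ for $h_P^\ve$ (not $a$), but two details need adjusting: (i) the drift term is not $O(\delta)$ --- with your antiderivative one gets $\Sigma_\delta = K_{\text{sat}}\sigma_\delta^+$, whose trace on $\Gamma^\ve_{S,0}$ tends to $K_{\text{sat}}\chi_{\{h_S^\ve>0\}}$; the point is that it is \emph{non-negative}, not small; (ii) Lemma~\ref{lemma_uniqueness_lemma_1} as stated asks $w^0 - a \in V(P^\ve)$, which fails for $w^0 = 0$ when $a<0$, so you must re-run its proof noting directly that $\sigma_\delta^+(h_P^\ve)\kappa \in L^2(0,T;V(P^\ve))$ because $\sigma_\delta^+(a)=0$. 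The paper's version is shorter; yours buys re-use of the Steklov-average lower bound already established.
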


\begin{proof}
Using $h_S^+$ and~$h_P^+$ as test functions in~\eqref{weak_formulation_soil} and~\eqref{weak_formulation_plant} and  adding the resulting equations yield
\begin{equation}\label{test_weak_form_h_plus}
\begin{aligned}
& \big[ \big\langle\partial_t {\theta}_S\big(h_{S}\big),h_S^+\big\rangle_{V(S^\ve)^\prime, T}  + \langle K_S(h_S)\big(\nabla h_{S} + {e}_3\big), \nabla{h_S^+}\rangle_{S^\ve_T} \big]\\
& \quad +  \big[ \big\langle\partial_t {\theta}_P\big(h_{P}\big),h_P^+\big\rangle_{V(P^\ve)^\prime, T}  + \langle H_r^\ve K_P(h_P)\big(\nabla h_{P} + {e}_3\big), \nabla{h_P^+}\rangle_{P^\ve_T} \big]\\
&  + \ve  k_\Gamma  \langle h_S- h_P, h_S^+ - h_P^+ \rangle_{\Gamma_{P,T}^\ve} + \langle f(h_S), h_S^+ \rangle_{\Gamma_{S,0, T}^\ve} + \langle \mathcal{T}_\text{pot}, h_P^+ \rangle_{\Gamma_{P,0, T}^\ve} =0.
\end{aligned}
\end{equation}
Notice that $h_P^+\in$~$V(P^\ve)$, since $h_P = a\leq0$ on $\Gamma_{P,L_3}$. We first show
\begin{equation}\label{eqn_upper_bound_psi result}
\Psi_J(h_J(s))	-
\Psi_J(h_J(s-\tau))\leq\big[\theta_J\big(h_J(s)\big)- \theta_J\big(h_J(s-\tau)\big)\big]h_J^+(s),
\end{equation}
for all~$s\in(0,T]$ and $J=S,P$, where $\Psi_J:\mathbb{R}\to[0,\infty)$ is given by $\Psi_J(h_J) = \int_0^{h_J} {\theta}'_J(z)z^+dz$. Integrating by parts and using $ {\theta}_J(0) = 0$ yield
$$
\Psi_J(h_J) = \begin{cases}
{\theta}_J(h_J)h_J - \int_0^{h_J} {\theta}_J(z)dz &\text{ if }h_J>0,\\
0  &\text{ otherwise}.
\end{cases}
$$
For~$\tau>0$ and~$s\in(0,T]$,  if~$h_J(s)>h_J(s-\tau)>0$, where~$h_J(s-\tau)=h_{J,0}$ for~$\tau>s$, we have
\begin{equation}\label{eqn1_upper_bound_Psi}
\begin{aligned}
&\Psi_J(h_J(s))	- \Psi_J(h_J(s-\tau)) \leq \theta_J(h_{J}(s))h_J(s) -  \theta_J(h_{J}(s-\tau))h_J(s-\tau)
\\ &- (h_{J}(s)-h_{J}(s-\tau)) \theta_J(h_{J}(s-\tau))\leq \big( \theta_J(h_{J}(s))- \theta_J(h_{J}(s-\tau))\big)h_J^+(s).
\end{aligned}
\end{equation}
Following a similar line of argument, result~\eqref{eqn1_upper_bound_Psi} is also obtained for~$h_J(s-\tau)>h_J(s)>0$. If~$h_J(s)>0>h_J(s-\tau)$, then since~$\theta_J(h_J(s-\tau))<0$ it follows
$$
\Psi_J(h_J(s)) - \Psi_J(h_J(s-\tau)) \leq \theta_J\big(h_{J}(s)\big)h_J(s) \leq \big( {\theta}_J(h_{J}(s))- {\theta}_J(h_{J}(s-\tau))\big)h_J^+(s).
$$
If~$h_J(s-\tau)>0>h_J(s)$, then since~$\Psi_J(h_J(s)) = 0$ and~$h_J^+(s)=0$ we have
$$
\Psi_J(h_J(s)) -\Psi_J(h_J(s-\tau)) \leq 0 =\big[\theta_J(h_J(s))- \theta_J(h_J(s-\tau))\big]h_J^+(s).
$$
Combining estimates above yields \eqref{eqn_upper_bound_psi result}. Multiplying each side of~\eqref{eqn_upper_bound_psi result} by~$1/\tau$, using that $\partial_t {\theta}_J(h_{J})\in L^2(0,T;V(J^\ve)^\prime)$ and $h_J^+\in L^2\big(0,T;V(J^\ve)\big)$ and that the initial condition~$h_{J,0}\leq0$ implies~$\Psi_J(h_{J,0})=0$, and taking the limit as $\tau \to 0$ yields
\begin{equation}\label{eqn4_result_positive_time_derivative_soil}
\int_0^T\big\langle\partial_t {\theta}_J(h_{J}),h_J^+\big\rangle_{V(J^\ve)^\prime}\, dt \geq \int_{J^\ve}\Psi_J(h_J(T))\, dx \geq 0, \qquad \text{ for } \; \;  J=S,P.
\end{equation}
The definition of~$h_S^+,h_S^-,h_P^+$ and~$h_P^-$, implies
\begin{equation*}
h_Sh_S^+ + h_Ph_P^+ - h_Ph_S^+ - h_Sh_P^+ = \big(h_S^+-h_P^+\big)^2 - h_P^-h_S^+ -h_S^-h_P^+ \geq  0.
\end{equation*}
Combining the results above, together  with $\mathcal{T}_\text{pot}>0$ and $f\big(h_S\big)h_S^+\geq0$, it follows, from~\eqref{test_weak_form_h_plus},
\begin{equation}\label{eqn2_h_non_positive}
\int_0^T\!\!\Big[\!\int_{S^\ve} \!\!\!\! K_S(h_S)\big[|\nabla h_{S}^+|^2 + \partial_{x_3} h_{S}^+\big]dx + \!\int_{P^\ve} \!\!\!\! H_r^\ve K_P(h_P)\big[|\nabla h_{P}^+|^2 + \partial_{x_3} h_{P}^+\big]dx\Big]dt \leq 0.
\end{equation}
Using $K_J(h_J)= K_{J}(0)>0$ for $h_J\geq 0$ and  the boundary conditions on $\Gamma^\ve_{J, L_3}$, gives
\begin{equation}\label{eqn3_h_non_positive}
\int_0^T\!\!\!\int_{J^\ve} \!\!\! K_J(h_J)\partial_{x_3} h_{J}^+dxdt= K_{J}(0)\int_0^T\!\!\!\int_{J^\ve}\!\!\! \partial_{x_3} h_{J}^+dxdt = K_{J}(0)\int_0^T\!\!\! \int_{\Gamma_{J, 0}^\ve} \!\!\! h_{J}^+dxdt,
\end{equation}
for $J=S,P$.	Thus from~\eqref{eqn2_h_non_positive} and \eqref{eqn3_h_non_positive} we obtain
\begin{equation*}\label{eqn5_h_non_positive}
 \int_0^T\!\!\!\int_{J^\ve} \!\! K_J\big(h_J\big)\big\lvert\nabla h_{J}^+\big\rvert^2 dxdt = 0, \qquad
K_{J}(0)\int_0^T\!\!\!\int_{\Gamma_{J, 0}^\ve}\!\!\! h_{J}^+dxdt = 0, \qquad J=S,P,
\end{equation*}
and  that $h_S$ and~$h_P$ are non-positive over~$(0,T)\times S^\ve$ and~$(0,T)\times P^\ve$ respectively.
\end{proof}
\begin{remark}
Theorems~\ref{theorem_microscopic_models_existence} and~\ref{theorem_microscopic_models_uniqueness} were proven for $h_S:(0,T)\times S^\ve\to\mathbb{R}$ and~$h_P:(0,T)\times P^\ve\to\mathbb{R}$. Physically realistic functions for water content~$\theta_R$, $\theta_B$, and~$\theta_P$ and hydraulic conductivity~$K_R$, $K_B$, and~$K_P$ are  usually not defined for positive values of pressure head and, in the cases where they are, they often fail to satisfy Assumption~\ref{assumption}, see~\citep{van1980closed,janott2011one,bittner2012functional}. In the Appendix, we provide functions for~$\theta_S$,~$K_S$,~$\theta_P$ and~$K_P$, which extend the expressions used in~\citep{van1980closed,janott2011one,bittner2012functional}, to positive values of pressure head, in a way that the criteria of Assumption~\ref{assumption}, Theorem~\ref{theorem_microscopic_models_uniqueness} and Proposition~\ref{proposition_non-positivity proof} are satisfied. These extensions can therefore be assumed throughout the proofs of Theorems~\ref{theorem_microscopic_models_existence} and~\ref{theorem_microscopic_models_uniqueness}. Moreover, since Proposition~\ref{proposition_non-positivity proof} shows that $h_S$ and~$h_P$ will remain non-positive, for any non-positive initial condition, the question of how realistic these extensions are for positive values of pressure head is not a concern.
\end{remark}

\section{Derivation of macroscopic model}\label{section:macro_model}
To derive macroscopic equations from the microscopic model for the water transport in vegetated soil, we  apply the two-scale convergence and periodic unfolding method, see e.g.~\cite{Allaire_1992,  cioranescu2018periodic,  Neuss-Radu_1996, Nguetseng_1989}. To pass to the limit as $\ve \to 0$  we first derive a priori estimates uniform in $\ve >0$.

\begin{lemma}\label{lemma:two-scale_convergence_1}
	Under Assumption~\ref{assumption}, solutions~$(h_S^\varepsilon, h_P^\varepsilon)$ of~\eqref{richards_soil}--\eqref{bcs_root}  satisfy the following estimates
\begin{equation}\label{apriori_1_1}
\begin{aligned}
\big\|h_S^\varepsilon\big\|^2_{L^2(0,T;V(S^\varepsilon))}  + \big\|h_P^\varepsilon\big\|^2_{L^2( P^\varepsilon_T)} +
\big\|\partial_{x_3}h_P^\varepsilon\big\|^2_{L^2(P^\varepsilon_T)} +
\varepsilon\big\|\nabla_{\hat{x}} h_P^\varepsilon\big\|^2_{L^2(P^\varepsilon_T)} & \leq C, \\
\ve \|h_S^\ve\|^2_{L^2(\Gamma_{P,T}^\ve)}  + \ve \|h_P^\ve\|^2_{L^2(\Gamma_{P,T}^\ve)} +  \|h_S^\ve\|^2_{L^2(\Gamma_{S,0, T}^\ve)} +  \|h_P^\ve\|^2_{L^2(\Gamma_{P, 0, T}^\ve)} & \leq C, \\
\sup_{0\leq t\leq T}\| {\Theta}_S^\varepsilon(\cdot,{h}^\ve_{S}(t))\|_{L^1(S^\ve)} +
\sup_{0\leq t\leq T}\|	 {\Theta}_P(h^\ve_{P}(t)) \|_{L^1(P^\ve)} & \leq C,
\end{aligned}
\end{equation}
where~$\nabla_{\hat{x}}h_P^\varepsilon=(\partial_{x_1}h_P^\varepsilon,\partial_{x_2}h_P^\varepsilon)^\top$,~$ {\Theta}_S^\varepsilon$ and~$ {\Theta}_P$    as in~\eqref{big_theta}, and~$C>0$ is independent of $\ve$.
\end{lemma}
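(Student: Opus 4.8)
The plan is to re-run the energy estimate from the proof of Theorem~\ref{theorem_microscopic_models_existence}, but now tracking every power of $\ve$ and replacing each Poincar\'e and trace inequality by a version whose constant is independent of $\ve$ (rigorously one argues at the level of the Rothe--Galerkin approximations $h^m_{J,i}$ and passes to the limit $m,N\to\infty$ with the convergences already established there; I describe the formal continuous version). First I would test \eqref{weak_formulation_soil} with $\phi=h_S^\ve\chi_{(0,t)}$ and \eqref{weak_formulation_plant} with $\psi=(h_P^\ve-a)\chi_{(0,t)}$, add the two identities, and use the chain-rule inequality underlying \eqref{rslt_bg_sml_theta}, i.e.\ $\int_0^t\langle\partial_t{\theta}_J(h_J^\ve),h_J^\ve\rangle_{V(J^\ve)^\prime}\,ds\ge\int_{J^\ve}{\Theta}_J(h_J^\ve(t))\,dx-\int_{J^\ve}{\Theta}_J(h_{J,0})\,dx$ (the $a$-shift in the root test function contributes only $\ve$-uniformly bounded lower-order terms, since ${\theta}_P$ and $a$ are bounded). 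Using (A1), (A2), that $e_3$ enters only the third component, and that $I_\ve K_P$ is diagonal with horizontal part $\sim\ve\,\kappa_P$ and $O(1)$ vertical part, this yields
\begin{equation*}
\begin{aligned}
&\sum_{J=S,P}\int_{J^\ve}{\Theta}_J\big(h_J^\ve(t)\big)\,dx
+K_{S,0}\|\nabla h_S^\ve\|^2_{L^2(S^\ve_t)}
+c\,\|\partial_{x_3}h_P^\ve\|^2_{L^2(P^\ve_t)}\\
&\qquad
+c\,\ve\,\|\nabla_{\hat x}h_P^\ve\|^2_{L^2(P^\ve_t)}
+\ve\,k_\Gamma\,\|h_S^\ve-h_P^\ve\|^2_{L^2((0,t)\times\Gamma_P^\ve)}
\le C_0+|\mathcal G|+|\mathcal B_\Gamma|+|\mathcal B_0|,
\end{aligned}
\end{equation*}
with $c>0$, $C_0>0$ independent of $\ve$; here $C_0\sim\sum_J\|{\Theta}_J(h_{J,0})\|_{L^1(J^\ve)}+C a^2$ (uniform since ${\theta}_J$ is bounded and the data are restrictions of fixed functions to $J^\ve$), $\mathcal G$ gathers the two gravity cross terms, $\mathcal B_0$ the forcing on $\Gamma_{S,0}^\ve$ and $\Gamma_{P,0}^\ve$, and $\mathcal B_\Gamma=\ve k_\Gamma a\int_0^t\!\!\int_{\Gamma_P^\ve}(h_S^\ve-h_P^\ve)$ is the residual left on $\Gamma_P^\ve$ once the nonnegative square has been isolated.

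The crux is to absorb $\mathcal G,\mathcal B_\Gamma,\mathcal B_0$ into the left-hand side using only $\ve$-uniform inequalities, and here the cylindrical microstructure is the key asset. Since each vertical fibre $\{\hat x\}\times(-L_3,0)$ lies in $J^\ve$ with Dirichlet data at $x_3=-L_3$, one gets the $\ve$-free bounds $\|v\|_{L^2(J^\ve)}\le L_3\|\partial_{x_3}v\|_{L^2(J^\ve)}$ and $\|v\|^2_{L^2(\Gamma_{J,0}^\ve)}\le L_3\|\partial_{x_3}v\|^2_{L^2(J^\ve)}$ for $v\in V(J^\ve)$; for the lateral microstructure surface I would use the rescaled cell trace inequality $\ve\|v\|^2_{L^2(\Gamma_P^\ve)}\le C\big(\|v\|^2_{L^2(J^\ve)}+\ve^2\|\nabla_{\hat x}v\|^2_{L^2(J^\ve)}\big)$, obtained by rescaling the trace inequality on $\Gamma_P\subset\partial Y_P$ and integrating in $x_3$ and over the cells, together with the measure scalings $|\Gamma_P^\ve|\le C\ve^{-1}$, $|\Gamma_{S,0}^\ve|,|\Gamma_{P,0}^\ve|\le C$. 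Then $\mathcal G$ is controlled by the bounds on $K_S^\ve,\kappa_P$ and Young's inequality (losing only $Ct$); $\mathcal B_0$ by $|f|\le f_m$, boundedness of $\mathcal T_{\text{pot}}$, the fibre trace applied to $h_S^\ve$ and $h_P^\ve-a$, and Young; and $\mathcal B_\Gamma$ by writing $h_P^\ve=(h_P^\ve-a)+a$, using $\ve|\Gamma_P^\ve|\le C$ on the constant part and the scaled trace followed by the fibre Poincar\'e on the remaining $\ve\|h_S^\ve\|^2_{L^2(\Gamma_P^\ve)}$ and $\ve\|h_P^\ve-a\|^2_{L^2(\Gamma_P^\ve)}$. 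With the Young parameters small enough, every gradient contribution is absorbed into $K_{S,0}\|\nabla h_S^\ve\|^2$, $c\|\partial_{x_3}h_P^\ve\|^2$ and $c\,\ve\|\nabla_{\hat x}h_P^\ve\|^2$; because ${\Theta}_J$ never reappears on the right, the resulting right-hand side is an $\ve$- and $t$-independent constant and no Gr\"onwall argument is needed.

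Taking the supremum over $t\in[0,T]$ then yields the two $L^1$-bounds on ${\Theta}_S^\ve(\cdot,h_S^\ve(t))$ and ${\Theta}_P(h_P^\ve(t))$, while the dissipation terms give $\|\nabla h_S^\ve\|_{L^2(S^\ve_T)}$, $\|\partial_{x_3}h_P^\ve\|_{L^2(P^\ve_T)}$ and $\ve^{1/2}\|\nabla_{\hat x}h_P^\ve\|_{L^2(P^\ve_T)}$; the $L^2(0,T;V(S^\ve))$-bound on $h_S^\ve$ and the $L^2(P^\ve_T)$-bound on $h_P^\ve$ follow from the fibre Poincar\'e inequality (after subtracting $a$ in the root), and the four surface bounds in \eqref{apriori_1_1} follow from the fibre trace and the scaled trace applied to the already-controlled bulk norms, again splitting off the constant $a$ where needed. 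I expect the main obstacle to be the $\ve$-uniformity bookkeeping itself: one must use in concert the $\ve$-free Poincar\'e and top-trace inequalities granted by the cylindrical geometry, the exact $\ve$-scalings of $I_\ve K_P$ and of the interface coefficient $\ve k_\Gamma$, and the $\ve^{-1}$ growth of $|\Gamma_P^\ve|$, so that every constant ends up independent of $\ve$ — this is precisely where the present estimate improves on the $\ve$-dependent bound \eqref{apriori_1} from the proof of Theorem~\ref{theorem_microscopic_models_existence}.
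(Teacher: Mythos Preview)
Your proposal is correct and follows the same overall strategy as the paper: test \eqref{weak_formulation_soil} and \eqref{weak_formulation_plant} with $h_S^\ve$ and $h_P^\ve-a$, use the $\Theta_J$-chain-rule inequality, and replace each trace and Poincar\'e step by an $\ve$-independent version. The one genuine technical difference is how you obtain the $\ve$-uniform constants for the soil subdomain. The paper invokes an extension operator $h_S^\ve\mapsto\hat h_S^\ve\in V(\Omega)$ with $\ve$-independent norm bounds (see \eqref{eqn16_lemma_2scale_convergence_1}), and then applies the standard Poincar\'e and trace inequalities on the fixed domain $\Omega$ to control $\|h_S^\ve\|_{L^2(S^\ve)}$ and $\|h_S^\ve\|_{L^2(\Gamma_{S,0}^\ve)}$. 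You instead exploit the cylindrical structure directly via the one-dimensional ``fibre'' Poincar\'e and trace inequalities in $x_3$, which is more elementary and avoids the extension machinery altogether; for $P^\ve$ both proofs use essentially the same rescaled unit-cell arguments. Your route is arguably cleaner for this lemma, while the paper's choice has the side benefit that the extension $\hat h_S^\ve$ is reused later in the two-scale convergence arguments (Lemma~\ref{lem:conver_hS}). You also make the residual boundary term $\mathcal B_\Gamma=\ve k_\Gamma a\int_{\Gamma_P^\ve}(h_S^\ve-h_P^\ve)$ explicit and estimate it via the scaled trace plus $\ve|\Gamma_P^\ve|\le C$, which the paper absorbs more tersely; your treatment is fine.
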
	
\begin{proof}
Considering $h_S^\varepsilon$ and $h_P^\varepsilon-a$ as test functions in \eqref{weak_formulation_soil} and \eqref{weak_formulation_plant}, respectively, adding the resulting equations, using Assumption~\ref{assumption} on~$f$ and $K_J$, for $J=B,R,P$, and  $a\leq 0$, yields
\begin{equation}\label{eqn1_lemma_2scale_convergence_1}
\begin{aligned}
&\langle\partial_t {\theta}_S^\varepsilon(x, h_S^\varepsilon), h_S^\varepsilon \rangle_{V(S^\ve)^\prime, \tau} +\langle\partial_t {\theta}_P(h_P^\varepsilon), h_P^\varepsilon-a\rangle_{V(P^\ve)^\prime, \tau}  \\
& +  \frac{K_{S,0}}2\|\nabla h_S^\varepsilon\|^2_{L^2(S^\ve_\tau)} +
\frac{\tilde K_{P,0}}2\| I_{\sqrt{\varepsilon}}\nabla h_P^\varepsilon\|^2_{L^2(P^\ve_\tau)}  +
\varepsilon k_\Gamma \| h_S^\varepsilon-h_P^\varepsilon\|^2_{L^2(\Gamma^\ve_{P,\tau})}
\\&	\leq
\delta \big[	\|h_S^\varepsilon\|^2_{L^2(\Gamma_{S,0, \tau}^\varepsilon)} +  \|h_P^\varepsilon\|^2_{L^2(\Gamma_{P,0, \tau}^\varepsilon)} \big] + \frac 1{4\delta} \big[ \|K_S^\varepsilon(x, h_S^\varepsilon)\|^2_{L^2(S^\ve_\tau)}
\\& \qquad  + k_{r,ax}\| I_{\sqrt{\varepsilon}} K_P(h_P^\varepsilon)\|^2_{L^2(P^\ve_\tau)}
+ \|f(h_S^\ve)\|^2_{L^2(\Gamma_{S,0, \tau}^\varepsilon)} \!\! + \|\mathcal T_{\rm pot}\|^2_{L^2(\Gamma_{P,0, \tau}^\varepsilon)}\big],
\end{aligned}
\end{equation}
for $\tau \in (0, T]$ and  $0<\delta \leq \min\{K_{S,0}, \tilde K_{P,0}\}/2$, where $K_{S,0} = \min\{ K_{R,0}, K_{B,0}\}$ and $\tilde K_{P,0} =  k_{r,ax} K_{P,0}$, with $k_{r,ax}= \rho g  \min\{2\pi r k_{\rm r}, k_{\rm ax}/L_3\}$.
The definition of $\Theta_S^\ve$ and $\Theta_P$, implies
\begin{equation}\label{eqn4_lemma_2scale_convergence_1}
\begin{aligned}
\int_0^\tau \!\!\big\langle\partial_t {\theta}_S^\varepsilon(x, h_S^\varepsilon), h_S^\varepsilon \big\rangle_{V(S^\ve)^\prime} dt \geq & \int_{S^\varepsilon}\!\! {\Theta}_S^\varepsilon(x, h_S^\varepsilon(\tau))dx - \int_{S^\varepsilon}\!\! {\Theta}_S^\varepsilon(x, h_{S,0})dx,  \qquad  \\
\int_0^\tau \!\! \big\langle\partial_t {\theta}_P(h_P^\varepsilon), h_P^\varepsilon-a\big\rangle_{V(P^\ve)^\prime} dt & \geq
\int_{P^\varepsilon} \!\! {\Theta}_P(h_P^\varepsilon(\tau))dx - \int_{P^\varepsilon}\!\! {\Theta}_P(h_{P,0})dx\\
& -
a\int_{P^\varepsilon} \!\! {\theta}_P(h_P^\varepsilon(\tau))dx  + a\int_{P^\varepsilon} \!\!{\theta}_P(h_{P,0})dx ,
\end{aligned}
\end{equation}
for $\tau \in (0,T]$.
To estimate the third integral on the right hand-side of the second inequality in~\eqref{eqn4_lemma_2scale_convergence_1} we can use the continuity of $\theta_P$ and estimate
$$
| {\theta}_P(h_P^\varepsilon)| \leq \delta {\Theta}_P(h_P^\varepsilon) + \sup\limits_{|\sigma|\leq 1/2} |{\theta}_P(\sigma)|,
$$
which follows from the definition of ${\Theta}_P$ or, as in our case, use the boundedness of  $\theta_P$. The assumption on the microscopic structure of $S^\ve$ ensures existence of an extension~$\hat h_S^\varepsilon \in L^2(0,T;V(\Omega))$ of~$h_S^\varepsilon$ from~$(0,T)\times S^\varepsilon$ to~$(0,T)\times\Omega$  such that
\begin{equation}\label{eqn16_lemma_2scale_convergence_1}
\|\hat h_S^\varepsilon \|^2_{L^2(\Omega_T)} \leq C\| h_S^\varepsilon\|^2_{L^2(S^\ve_T)} , \quad
\| \nabla\hat h_S^\varepsilon\|^2_{L^2(\Omega_T)} \leq C\| \nabla h_S^\varepsilon \|^2_{L^2(S^\ve_T)},
\end{equation}
where $C>$ is independent of $\ve$, see e.g.~\citep[Theorem 2.10]{cioranescu2012homogenization}. By the trace theorem and~\eqref{eqn16_lemma_2scale_convergence_1}, we have
\begin{equation}\label{eqn17_lemma_2scale_convergence_1}
\begin{aligned}
\|h_S^\varepsilon\|^2_{L^2((0,T)\times \Gamma_{S,0}^\ve)}  \leq \| \hat h_S^\varepsilon \|^2_{L^2((0,T)\times \Gamma_0)}
\leq C\big[\|\hat h_S^\varepsilon\|^2_{L^2(\Omega_T)}  + \|\nabla \hat h_S^\varepsilon\|^2_{L^2(\Omega_T)}\big] \\\leq
C\big[\|h_S^\varepsilon\|^2_{L^2(S^\ve_T)}  + \|\nabla h_S^\varepsilon\|^2_{L^2(S^\ve_T)}\big].
\end{aligned}
\end{equation}
Applying the trace theorem over the unit cell $P =Y_P\times (-L_3,0)$  and the $\ve$-scaling in $(y_1,y_2)$-variables, together with the definition of the domain $P^\ve$, yields
\begin{equation}\label{eqn19_lemma_2scale_convergence_1}
\| h_P^\varepsilon\|^2_{L^2((0,T)\times \Gamma^\ve_{P,0})} \leq
C\big(\| h_P^\varepsilon\|^2_{L^2((0,T)\times P^\ve)} + \| I_\varepsilon\nabla h_P^\varepsilon\|^2_{L^2((0,T)\times P^\ve)}\big).
\end{equation}
Thus,  using \eqref{eqn4_lemma_2scale_convergence_1}, \eqref{eqn17_lemma_2scale_convergence_1}, and \eqref{eqn19_lemma_2scale_convergence_1} in inequality  \eqref{eqn1_lemma_2scale_convergence_1} implies
\begin{equation}\label{eqn20_lemma_2scale_convergence_1}
\begin{aligned}
\big\| {\Theta}_S^\varepsilon\big(\cdot, h_S^\varepsilon(\tau)\big)\big\|_{L^1(S^\ve)}
& + \big \| {\Theta}_P\big(h_P^\varepsilon(\tau)\big) \big \|_{L^1(P^\ve)} + \|\nabla h_S^\varepsilon\|^2_{L^2(S^\ve_\tau)}\\
&  	+ \| I_{\sqrt{\varepsilon}}\nabla h_P^\varepsilon\|^2_{L^2(P^\ve_\tau)}
\leq  \delta \Big[\| h_P^\varepsilon\|_{L^2(P^\ve_\tau)}^2  + \|h_S^\varepsilon\|_{L^2(S^\ve_\tau)}^2 \Big] + C_\delta,
\end{aligned}
\end{equation}
for $\tau \in (0,T]$ and   $0<\delta \leq \min\{K_{S,0}, \tilde K_{P,0}\}/(4C)$.
Applying  the generalised Poincar\'e inequality	   over~$P$ and using the boundary condition on~$\Gamma_{P,L_3}$ and the standard scaling argument,  for the first term on the right hand side of~\eqref{eqn20_lemma_2scale_convergence_1} we have
\begin{equation}\label{eqn21_lemma_2scale_convergence_1}
\begin{aligned}
\| h_P^\varepsilon\|^2_{L^2(P^\ve_\tau)} \leq C\big[\| I_\varepsilon\nabla h_P^\varepsilon\|^2_{L^2( P^\ve_\tau)} + T |a|^2 |\Gamma^\ve_{P,L_3}|\big],
\end{aligned}
\end{equation}
where~$C>0$ is independent of~$\varepsilon$. Similarly  for the second term on the right hand side of~\eqref{eqn20_lemma_2scale_convergence_1}, the generalised Poincar\'e inequality, the extension properties, and  $h_S^\varepsilon=0$ at $\Gamma^\ve_{S,L_3}$, imply
\begin{equation}\label{eqn23_lemma_2scale_convergence_1}
\begin{aligned}
\| h_S^\varepsilon\|_{L^2(S^\ve_\tau)} \leq \|\hat{h}_S^\varepsilon \|_{L^2(\Omega_\tau)} \leq C_1 \| \nabla\hat{h}_S^\varepsilon \|_{L^2(\Omega_\tau)} \leq C_2 \|\nabla h_S^\varepsilon\|_{L^2(S^\ve_\tau)},
\end{aligned}
\end{equation}
where~$C_1, C_2>0$ are independent of~$\varepsilon$.
Using now \eqref{eqn21_lemma_2scale_convergence_1} and \eqref{eqn23_lemma_2scale_convergence_1} in \eqref{eqn20_lemma_2scale_convergence_1}, and considering an appropriate $\delta$, yields the first  and the last estimates stated in the lemma.

The trace theorem over~$Y$, together with the standard scaling argument,   implies
\begin{equation}\label{eqn15_lemma_2scale_convergence_1}
\ve  \|h_J^\varepsilon\|^2_{L^2(\Gamma_{P,T}^\ve)} \leq C \big(\| h_J^\varepsilon\|^2_{L^2( J_T^\ve)}
+  \ve^2 \|\nabla_{\hat x} h_J^\varepsilon\|^2_{L^2(J_T^\ve)}\big), \quad \text{ for } J=S,P,
\end{equation}
with constant $C>0$ independent of~$\varepsilon$.
This, together with the  first estimate in \eqref{apriori_1_1},   yields  the estimates for the $L^2((0,T)\times \Gamma_P^\ve)$-norm.  Estimates \eqref{eqn17_lemma_2scale_convergence_1} and \eqref{eqn19_lemma_2scale_convergence_1} ensure the uniform in~$\ve$ boundedness of $h_S^\ve$ in $L^2((0,T)\times \Gamma_{S,0}^\ve)$ and $h_P^\ve$ in $L^2((0,T)\times \Gamma_{P,0}^\ve)$ respectively.
\end{proof}

Estimates in \eqref{apriori_1_1}, together with the properties of the extension \eqref{eqn16_lemma_2scale_convergence_1} and  of the two-scale convergence, see e.g.~\cite{Allaire_1992,   Nguetseng_1989, Neuss-Radu_1996}, imply the following convergence results
\begin{lemma} \label{lem:conver_hS}
There exist $h_S\in L^2(0,T;V(\Omega))$ and $h_{S,1} \in L^2((0,T)\times \Omega; H^1_{\rm per} (Y))$ such that
\begin{equation}\label{eqn_two_scale_convergence_weak_convergence_soil_extension}
\begin{aligned}
& \hat h_S^\varepsilon \rightharpoonup h_S && \text{ weakly in }  L^2(0,T;V(\Omega)), \\
& \hat h_S^\varepsilon \rightharpoonup h_S, \;\;  \nabla \hat h_S^\varepsilon \rightharpoonup \nabla h_S + \nabla_{y,0} h_{S,1}    && \text{ two-scale},  \\
& h_S^\ve  \rightharpoonup h_S   && \text{ two-scale  on }   (0,T)\times \Gamma_P^\ve,
\end{aligned}
\end{equation}
as $\ve \to 0$, where $\nabla_{y,0} u = (\partial_{y_1} u, \partial_{y_2} u, 0)^T$.
\end{lemma}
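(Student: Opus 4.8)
The plan is to combine the uniform bounds of Lemma~\ref{lemma:two-scale_convergence_1} with the extension estimates~\eqref{eqn16_lemma_2scale_convergence_1} and the standard compactness properties of two-scale convergence, together with its analogue on oscillating surfaces. First, \eqref{eqn16_lemma_2scale_convergence_1} and the first estimate in~\eqref{apriori_1_1} show that $\hat h_S^\varepsilon$ is bounded in $L^2(0,T;V(\Omega))$; since each $\hat h_S^\varepsilon$ lies in $L^2(0,T;V(\Omega))$ and $V(\Omega)$ is a closed, hence weakly closed, subspace of $H^1(\Omega)$, the Banach--Alaoglu theorem yields a subsequence with $\hat h_S^\varepsilon\rightharpoonup h_S$ weakly in $L^2(0,T;V(\Omega))$, which is the first assertion. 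By the Nguetseng--Allaire two-scale compactness theorem, after passing to a further subsequence we also have $\hat h_S^\varepsilon\to h_0$ and $\nabla\hat h_S^\varepsilon\to\xi$ two-scale, for some $h_0\in L^2((0,T)\times\Omega\times Y)$ and $\xi\in L^2((0,T)\times\Omega\times Y)^3$; since $\hat h_S^\varepsilon\rightharpoonup h_S$ weakly in $L^2((0,T)\times\Omega)$, necessarily $h_0=h_S$ and it is independent of $y$.

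Next I would identify the structure of $\xi$. Testing the two-scale limit against functions $\varphi(t,x)\,\psi(\hat x/\varepsilon)$ and $\varepsilon\varphi(t,x)\,\psi(\hat x/\varepsilon)$ with $\varphi\in C_0^\infty((0,T)\times\Omega)$ and $\psi\in C_{\rm per}^\infty(Y)$ — here the cylindrical microstructure enters, since the cell $Y=(0,1)^2$ is two-dimensional and the oscillating test functions carry no $y_3$-dependence — and integrating by parts in the usual way, one shows first that $\xi(t,x,\cdot)-\nabla h_S(t,x)$ is $y$-divergence free and curl free in $Y$, and then that $\xi=\nabla h_S+\nabla_{y,0}h_{S,1}$ for some $h_{S,1}\in L^2((0,T)\times\Omega;H^1_{\rm per}(Y))$, which may be normalised to have zero mean over $Y$. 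Equivalently, this is the periodic unfolding argument applied to $\mathcal T_\varepsilon\hat h_S^\varepsilon$: one has $\|\mathcal T_\varepsilon\hat h_S^\varepsilon\|_{L^2}\le C$ and $\|\nabla_y\mathcal T_\varepsilon\hat h_S^\varepsilon\|_{L^2}=\varepsilon\|\mathcal T_\varepsilon(\nabla\hat h_S^\varepsilon)\|_{L^2}\to0$, so the weak limit of $\mathcal T_\varepsilon\hat h_S^\varepsilon$ is $y$-independent and equals $h_S$, while $\tfrac1\varepsilon\big(\mathcal T_\varepsilon\hat h_S^\varepsilon-\mathcal M_Y(\mathcal T_\varepsilon\hat h_S^\varepsilon)\big)$ converges weakly to $h_{S,1}$, giving the second line of~\eqref{eqn_two_scale_convergence_weak_convergence_soil_extension}.

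For the boundary convergence I would invoke two-scale convergence on the oscillating surface $\Gamma_P^\varepsilon$, for which the appropriate uniform bound is precisely $\varepsilon\|h_S^\varepsilon\|^2_{L^2(\Gamma_{P,T}^\varepsilon)}\le C$ from~\eqref{apriori_1_1}, the factor $\varepsilon$ compensating for $|\Gamma_P^\varepsilon|\sim\varepsilon^{-1}$. The compactness theorem for surface two-scale convergence then gives, along a subsequence, a limit $h_S^\Gamma\in L^2((0,T)\times\Omega\times\Gamma_P)$ with $h_S^\varepsilon\rightharpoonup h_S^\Gamma$ two-scale on $(0,T)\times\Gamma_P^\varepsilon$. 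It remains to show $h_S^\Gamma=h_S$, that is, that the boundary limit is $y$-independent and coincides with the bulk limit. I would do this by transferring the bulk $H^1$-control to the surface: applying the boundary unfolding operator $\mathcal T_\varepsilon^b$ to $h_S^\varepsilon$ and noting that $\mathcal T_\varepsilon^b h_S^\varepsilon$ is the trace on $\Gamma_P$ of $\mathcal T_\varepsilon\hat h_S^\varepsilon$, one uses the trace inequality on the fixed cell $Y$ together with the facts established above — $\mathcal T_\varepsilon\hat h_S^\varepsilon$ bounded in $L^2((0,T)\times\Omega;H^1(Y))$ with $y$-gradient tending to zero — to conclude that $\mathcal T_\varepsilon^b h_S^\varepsilon$ converges to the ($y$-constant) trace of $h_S$, hence $h_S^\Gamma=h_S$.

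I expect this last identification of the surface two-scale limit to be the main obstacle: a priori the trace of $h_S^\varepsilon$ on $\Gamma_P^\varepsilon$ could carry $\varepsilon$-oscillations of its own, and only the uniform control of $\nabla h_S^\varepsilon$ in $L^2(S^\varepsilon)$ — equivalently, the vanishing of the $y$-gradient of the unfolded sequence — rules this out. Everything else is a routine application of the two-scale compactness machinery, the only other point requiring a little care being the bookkeeping for the two-dimensional cell $Y=(0,1)^2$ and the resulting $\nabla_{y,0}$-form of the corrector term.
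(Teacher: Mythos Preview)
Your proposal is correct and follows essentially the same approach as the paper, which does not give an explicit proof but simply invokes the a priori estimates~\eqref{apriori_1_1}, the extension property~\eqref{eqn16_lemma_2scale_convergence_1}, and the standard two-scale and surface two-scale compactness results of Allaire, Nguetseng, and Neuss-Radu. You have unpacked precisely these ingredients---including the identification of the surface limit via the $H^1$-control, which is the content of the cited references---so there is nothing to add.
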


The next lemma provides the convergence result for the sequence~$\{h_P^\varepsilon\}$.
\begin{lemma}\label{lemma:two-scale_convergence_2}
For an extension~$\widetilde{h_P^\varepsilon}$ of~$h_P^\varepsilon$  by zero from~$(0,T)\times P^\varepsilon$ into~$(0,T)\times\Omega$, there exists $h_P\in L^2((0,T)\times\Omega)$, with $\partial_{x_3} h_P \in L^2((0,T)\times \Omega)$, such that, as $\ve \to 0$,
\begin{equation}\label{conver_hp_1}
\begin{aligned}
& \widetilde{h_P^\varepsilon}\rightharpoonup h_P \, \chi_{Y_P} && \text{ two-scale}, \\
& h_P^\ve  \rightharpoonup h_P  && \text{ two-scale on } \;  (0,T)\times \Gamma_P^\ve, \\
& \widetilde{h_P^\varepsilon}\rightharpoonup\frac{|Y_P|}{\lvert Y\rvert}h_P, \quad
\widetilde{\partial_{x_3}h_P^\varepsilon}\rightharpoonup\frac{|Y_P|}{\lvert Y\rvert}\partial_{x_3}h_P && \text{ weakly in } \;  L^2((0,T)\times\Omega),\\
& \varepsilon\widetilde{\nabla_{\hat{x}}h_P^\varepsilon}\rightharpoonup 0 && \text{ weakly in } \; L^2((0,T)\times\Omega)^2.
\end{aligned}
\end{equation}
\end{lemma}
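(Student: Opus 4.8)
The plan is to combine the uniform bounds of Lemma~\ref{lemma:two-scale_convergence_1} with two-scale compactness and a family of integration-by-parts identities carried out on $P^\varepsilon$ itself -- never on the zero-extension $\widetilde{h_P^\varepsilon}$, whose distributional gradient would carry an unwanted surface contribution on $\Gamma_P^\varepsilon$. The last assertion is immediate: since $\varepsilon\|\nabla_{\hat x}h_P^\varepsilon\|^2_{L^2(P^\varepsilon_T)}\le C$, the sequence $\varepsilon\,\widetilde{\nabla_{\hat x}h_P^\varepsilon}=\sqrt{\varepsilon}\,\big(\sqrt{\varepsilon}\,\widetilde{\nabla_{\hat x}h_P^\varepsilon}\big)$ is a null sequence times a sequence bounded in $L^2((0,T)\times\Omega)^2$, hence converges to $0$ strongly, a fortiori weakly, in $L^2((0,T)\times\Omega)^2$. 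The remaining bounds -- $\widetilde{h_P^\varepsilon}$ and $\widetilde{\partial_{x_3}h_P^\varepsilon}$ bounded in $L^2((0,T)\times\Omega)$ and $\sqrt{\varepsilon}\,h_P^\varepsilon$ bounded in $L^2(\Gamma^\varepsilon_{P,T})$ -- give, up to a subsequence, two-scale limits $h_0,g_0\in L^2((0,T)\times\Omega\times Y)$ of $\widetilde{h_P^\varepsilon}$ and $\widetilde{\partial_{x_3}h_P^\varepsilon}$, and a two-scale limit $h_0^\Gamma\in L^2((0,T)\times\Omega\times\Gamma_P)$ of $h_P^\varepsilon$ on $(0,T)\times\Gamma_P^\varepsilon$. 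Since the zero-extensions vanish off $P^\varepsilon$, both $h_0$ and $g_0$ are supported in $\Omega\times Y_P$.

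The core step is to show that $h_0$ does not depend on $y$. For a test field $\Phi\in C_0^\infty\big((0,T)\times\Omega;C^\infty_{\rm per}(Y;\mathbb R^2)\big)$ whose support in $y$ is a compact subset of $Y_P$, Green's formula on the disjoint cylinders forming $P^\varepsilon$, together with $\varepsilon\,\mathrm{div}_{\hat x}[\Phi(\cdot,\hat x/\varepsilon)]=\varepsilon(\mathrm{div}_{\hat x}\Phi)(\cdot,\hat x/\varepsilon)+(\mathrm{div}_y\Phi)(\cdot,\hat x/\varepsilon)$, yields
\[
\int_0^T\!\!\int_{P^\varepsilon}\varepsilon\,\nabla_{\hat x}h_P^\varepsilon\cdot\Phi\Big(\cdot,\tfrac{\hat x}{\varepsilon}\Big)\,dx\,dt
=-\int_0^T\!\!\int_{P^\varepsilon}h_P^\varepsilon\Big[\varepsilon(\mathrm{div}_{\hat x}\Phi)\Big(\cdot,\tfrac{\hat x}{\varepsilon}\Big)+(\mathrm{div}_y\Phi)\Big(\cdot,\tfrac{\hat x}{\varepsilon}\Big)\Big]\,dx\,dt ,
\]
with no boundary term, since $\Phi$ vanishes near $\Gamma_P^\varepsilon$, the caps $\Gamma^\varepsilon_{P,0},\Gamma^\varepsilon_{P,L_3}$ do not contribute to the transverse components, and the incomplete cells are avoided for $\varepsilon$ small. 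The left-hand side and the $\varepsilon\,\mathrm{div}_{\hat x}\Phi$ term tend to $0$, while the two-scale convergence of $\widetilde{h_P^\varepsilon}$ sends the last term to $-|Y|^{-1}\int_0^T\!\int_\Omega\!\int_{Y_P}h_0\,\mathrm{div}_y\Phi$; hence $\nabla_y h_0=0$ in $\mathcal D'\big((0,T)\times\Omega\times Y_P\big)$ and, $Y_P$ being connected, $h_0(t,x,y)=h_P(t,x)$ on $\Omega\times Y_P$. This gives $\widetilde{h_P^\varepsilon}\rightharpoonup h_P\chi_{Y_P}$ two-scale and, by averaging in $y$, $\widetilde{h_P^\varepsilon}\rightharpoonup(|Y_P|/|Y|)\,h_P$ weakly in $L^2((0,T)\times\Omega)$.

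An analogous integration by parts in $x_3$ over $P^\varepsilon$, with test functions $\phi(t,x)\varphi(\hat x/\varepsilon)$, $\phi\in C_0^\infty((0,T)\times\Omega)$, $\varphi\in C^\infty(\overline{Y_P})$, again produces no boundary term: the lateral surface $\Gamma_P^\varepsilon$ has normal with vanishing third component, and the caps are avoided because $\phi$ has compact support. In the limit,
\[
\int_0^T\!\!\int_\Omega\Big(\int_{Y_P}g_0\,\varphi\,dy\Big)\phi\,dx\,dt=-\int_0^T\!\!\int_\Omega\Big(\int_{Y_P}\varphi\,dy\Big)h_P\,\partial_{x_3}\phi\,dx\,dt .
\]
Choosing $\varphi$ with $\int_{Y_P}\varphi\neq0$ shows $\partial_{x_3}h_P\in L^2((0,T)\times\Omega)$; letting $\varphi$ vary then forces $g_0=\partial_{x_3}h_P\,\chi_{Y_P}$, and averaging in $y$ gives $\widetilde{\partial_{x_3}h_P^\varepsilon}\rightharpoonup(|Y_P|/|Y|)\,\partial_{x_3}h_P$ weakly in $L^2((0,T)\times\Omega)$.

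For the surface convergence I repeat the identity of the second paragraph but now with $\Phi\in C_0^\infty\big((0,T)\times\Omega;C^\infty(\overline{Y_P};\mathbb R^2)\big)$ extended $Y$-periodically, keeping the boundary term $\int_0^T\!\int_{\Gamma_P^\varepsilon}\varepsilon\,h_P^\varepsilon\,\Phi(\cdot,\hat x/\varepsilon)\cdot\nu\,d\sigma\,dt$ ($\nu$ the transverse outer unit normal). Passing to the limit, using the divergence theorem $\int_{Y_P}\mathrm{div}_y\Phi=\int_{\Gamma_P}\Phi\cdot\nu\,d\sigma_y$, the identification $h_0=h_P\chi_{Y_P}$, and the two-scale convergence of $h_P^\varepsilon$ on $\Gamma_P^\varepsilon$ (with the correct $\varepsilon$-weighting of the surface measure), one obtains $\int_0^T\!\int_\Omega\!\int_{\Gamma_P}(h_0^\Gamma-h_P)\,\Phi\cdot\nu\,d\sigma_y\,dx\,dt=0$; since $\Phi\cdot\nu|_{\Gamma_P}$ may be taken to be an arbitrary smooth function on $\Gamma_P$, this yields $h_0^\Gamma=h_P$, i.e.\ $h_P^\varepsilon\rightharpoonup h_P$ two-scale on $(0,T)\times\Gamma_P^\varepsilon$. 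Uniqueness of $h_P$ and $\partial_{x_3}h_P$ makes the passage to a subsequence harmless. The crux throughout is to keep the extension-by-zero under control by working on $P^\varepsilon$, so that the surface term on $\Gamma_P^\varepsilon$ is either absent (for $y$-test functions supported in $Y_P$) or is precisely the quantity producing the trace identity, and to track the $\varepsilon$-scaling of $d\sigma$ on $\Gamma_P^\varepsilon$ and of the outer normal of $Y_P$ so that boundary two-scale convergence and the divergence theorem are applied consistently.
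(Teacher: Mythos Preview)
Your proof is correct and follows essentially the same approach as the paper: two-scale compactness from the uniform bounds, then an integration-by-parts identity on $P^\varepsilon$ with test fields compactly supported in $Y_P$ to show $y$-independence of the two-scale limit. The differences are only in level of detail: the paper handles the $x_3$-derivative in one line via the observation $\widetilde{\partial_{x_3}h_P^\varepsilon}=\partial_{x_3}\widetilde{h_P^\varepsilon}$ (valid because the microstructure is cylindrical), whereas you redo this via test functions; and for the surface limit the paper simply cites the boundary two-scale convergence literature, whereas you supply the explicit integration-by-parts argument keeping the $\Gamma_P^\varepsilon$ term. Your more self-contained treatment is perfectly valid, though the paper's shortcut for $\partial_{x_3}$ is worth noting as the natural simplification here.
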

\begin{proof}
From Lemma~\ref{lemma:two-scale_convergence_1} we have that the sequence~$\widetilde{h_P^\varepsilon}$ is bounded in~$L^2((0,T)\times \Omega)$ and, hence,  converges two-scale to~$h_P\in L^2((0,T)\times\Omega\times Y)$ with $h_P(t,x,y)=0$ for~$y\in Y\setminus\overline Y_P$, see e.g.~\cite{Allaire_1992}. From estimates in Lemma~\ref{lemma:two-scale_convergence_1} we also  have that~$\varepsilon^{1/2}\widetilde{\nabla_{\hat x} h_P^\varepsilon}$  converges two-scale and weakly in~$L^2((0,T)\times \Omega)^2$, and
\begin{equation}\label{eqn1_lemma_2scale_convergence_2}
\begin{aligned}
\varepsilon\int_0^T\!\!\!\!\int_{P^\varepsilon} \nabla_{\hat x} h_P^\varepsilon \, \psi\Big(t, x,\frac{\hat x}{\varepsilon}\Big) dxdt= \varepsilon\int_0^T\!\!\!\!\int_{\Omega} \widetilde{\nabla_{\hat x} h_P^\varepsilon} \, \psi\Big(t, x,\frac{\hat x}{\varepsilon}\Big) dxdt\to 0~\text{as }\varepsilon\to 0,
\end{aligned}
\end{equation}
for $\psi\in L^2(0,T;C_0^\infty(\Omega\times Y_P))^2$.  This, together with the two-scale convergence  of $\widetilde{h_P^\varepsilon}$  and with
\begin{equation}\label{eqn2_lemma_2scale_convergence_2}
\begin{aligned}
&	\varepsilon\int_0^T\!\!\!\!\int_{P^\varepsilon} \nabla_{\hat x} h_P^\varepsilon \, \psi\Big(t, x,\frac{\hat x}{\varepsilon}\Big) dxdt
= - \int_0^T\!\!\!\!\int_{P^\varepsilon}  h_P^\varepsilon \,\big[\ve {\rm div}_{\hat x}  \psi + {\rm div}_y  \psi \big]dxdt\\
&	= - \int_0^T\!\!\!\!\int_{\Omega}\widetilde{h_P^\varepsilon}\big[ \ve {\rm div}_{\hat x} \psi + {\rm div}_y \psi \big]dxdt
\to - \frac{1}{\lvert Y\rvert}\int_0^T\!\!\!\!\int_\Omega\int_{Y_P}h_P \, {\rm div}_y \psi \,  dydxdt,
\end{aligned}
\end{equation}
as $\ve \to 0$, implies~$h_P(t,x,y)=h_P(t,x)$ for $(t,x, y) \in (0,T)\times \Omega \times Y_P$. Choosing $\psi(t,x,y)=\psi(t,x)$, with $\psi \in C_0((0,T)\times \Omega)$, in the definition of the two-scale convergence of~$\widetilde{h_P^\varepsilon}$ gives
\begin{equation*}
\begin{aligned}
\lim_{\varepsilon\to0}\int_0^T\!\!\!\!\int_{\Omega}\widetilde{h_P^\varepsilon}(t,x)\psi(t, x)dxdt =
\int_0^T\!\!\!\!\int_{\Omega}\frac{|Y_P|}{\lvert Y\rvert}h_P(t,x)\psi(t,x)dxdt
\end{aligned}
\end{equation*}
and hence  the third convergence in~\eqref{conver_hp_1}.
The microscopic structure of $P^\ve$  implies $ \widetilde{\partial_{x_3}h_P^\varepsilon} = \partial_{x_3}\widetilde{h_P^\varepsilon}$. Then the estimate for  $\partial_{x_3}h_P^\varepsilon$, see Lemma~\ref{lemma:two-scale_convergence_1}, ensures  weak convergence of~$\widetilde{\partial_{x_3}h_P^\varepsilon}$  in~$L^2((0,T)\times\Omega)$ and, using the weak convergence of $\widetilde{h^\ve_P}$, we obtain the fourth  convergence in~\eqref{conver_hp_1}.
A priori estimates in \eqref{apriori_1_1} and properties of the two-scale convergence on oscillating boundaries, see e.g.~\cite{Allaire_1996, Neuss-Radu_1996}, ensure the second convergence result in~\eqref{conver_hp_1}. The uniform in $\ve$ estimate for $\varepsilon^\frac{1}{2}\widetilde{\nabla_{\hat{x}}h_P^\varepsilon}$, see~\eqref{apriori_1_1},  ensures  weak convergence of $\varepsilon^\frac{1}{2}\widetilde{\nabla_{\hat{x}}h_P^\varepsilon}$ in~$L^2((0,T)\times\Omega)^2$, and hence  the last convergence result in~\eqref{conver_hp_1}.
\end{proof}

To pass to the limit in the nonlinear terms in~\eqref{weak_formulation_soil} and \eqref{weak_formulation_plant}, we prove the strong two-scale convergence of ${\theta}_S^\varepsilon(\cdot, h_S^\varepsilon)$  and  ${\theta}_{P}(h_P^\varepsilon)$, by  showing the equicontinuity of the corresponding sequences and using  the Aubin-Lions-Simon compactness lemma~\cite{simon1986compact}.
\begin{lemma}\label{lemma:two_scale_convergence_3}
Under Assumption~\ref{assumption}, for  solutions of \eqref{richards_soil}-\eqref{bcs_root}  we have
\begin{equation}\label{equicont_theta_s}
\begin{aligned}
\int_0^{T-\lambda} \big \langle  \theta_S^\varepsilon(\cdot, h_S^\varepsilon(t+\lambda))- \theta^\ve_{S}(\cdot, h_S^\varepsilon(t)), h_S^\varepsilon(t+\lambda) - h_S^\varepsilon(t)\big\rangle_{S^\ve} dt \leq  C\lambda, \\
\int_0^{T-\lambda} \big\langle  {\theta}_P(h_P^\varepsilon(t+\lambda))- {\theta}_{P}(h_P^\varepsilon(t)), h_P^\varepsilon(t+\lambda) - h_P^\varepsilon(t)\big\rangle_{P^\ve}dt \leq  C\lambda,
\end{aligned}
\end{equation}
where $0<\lambda<T$ and   $C>0$ is independent of $\ve$.
\end{lemma}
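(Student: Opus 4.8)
The plan is to prove both time‑translation estimates directly from the weak formulations \eqref{weak_formulation_soil} and \eqref{weak_formulation_plant}, testing with the time increments of the solutions and using only the $\ve$‑uniform a priori bounds of Lemma~\ref{lemma:two-scale_convergence_1}. The difference with the discrete estimate \eqref{estim_diff_t_S}, obtained in the course of proving Theorem~\ref{theorem_microscopic_models_existence}, is that here every constant has to be tracked to be independent of $\ve$.

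First I would treat the soil equation. Since $\partial_t\theta_S^\ve(\cdot,h_S^\ve)\in L^2(0,T;V(S^\ve)')$ and $\theta_S^\ve(\cdot,h_S^\ve)\in L^2((0,T)\times S^\ve)$, the fundamental theorem of calculus in $V(S^\ve)'$ gives, for a.e.\ $t\in(0,T-\lambda)$ and every $\psi\in V(S^\ve)$,
\[
\la\theta_S^\ve(\cdot,h_S^\ve(t+\lambda))-\theta_S^\ve(\cdot,h_S^\ve(t)),\psi\ra_{S^\ve}=\int_t^{t+\lambda}\la\partial_s\theta_S^\ve(\cdot,h_S^\ve(s)),\psi\ra_{V(S^\ve)'}\,ds .
\]
Taking $\psi=h_S^\ve(t+\lambda)-h_S^\ve(t)\in V(S^\ve)$, integrating over $t\in(0,T-\lambda)$ and interchanging the order of the $s$‑ and $t$‑integrations, the left‑hand side of the first inequality in \eqref{equicont_theta_s} equals $\int_0^T\la\partial_s\theta_S^\ve(\cdot,h_S^\ve(s)),\Phi^\ve(s)\ra_{V(S^\ve)'}\,ds$, where
\[
\Phi^\ve(s)=\int_{\max\{0,\,s-\lambda\}}^{\min\{s,\,T-\lambda\}}\big(h_S^\ve(t+\lambda)-h_S^\ve(t)\big)\,dt\ \in\ V(S^\ve)
\]
is an average over an interval of length at most $\lambda$. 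By the Cauchy--Schwarz inequality and a second interchange of the integration order,
\[
\|\Phi^\ve\|^2_{L^2(S^\ve_T)}+\|\nabla\Phi^\ve\|^2_{L^2(S^\ve_T)}\le C\lambda^2\big(\|h_S^\ve\|^2_{L^2(S^\ve_T)}+\|\nabla h_S^\ve\|^2_{L^2(S^\ve_T)}\big)\le C\lambda^2
\]
by the first estimate in \eqref{apriori_1_1}. Substituting the (time‑localised) weak formulation \eqref{weak_formulation_soil} to rewrite $\la\partial_s\theta_S^\ve(\cdot,h_S^\ve),\Phi^\ve\ra_{V(S^\ve)'}$ in terms of the elliptic and boundary terms, each of these is estimated by Cauchy--Schwarz using $\|K_S^\ve(\cdot,h_S^\ve)(\nabla h_S^\ve+e_3)\|_{L^2(S^\ve_T)}\le C$, $|f|\le f_m$, the $\ve$‑uniform trace inequalities \eqref{eqn15_lemma_2scale_convergence_1}, \eqref{eqn17_lemma_2scale_convergence_1} applied to $\Phi^\ve$, and $\sqrt\ve\,\|h_S^\ve-h_P^\ve\|_{L^2(\Gamma_{P,T}^\ve)}\le C$ from \eqref{apriori_1_1}; together with $\|\Phi^\ve\|_{L^2(S^\ve_T)}+\|\nabla\Phi^\ve\|_{L^2(S^\ve_T)}\le C\lambda$ this yields the first inequality of \eqref{equicont_theta_s}.

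For the root equation I would proceed in the same way, testing \eqref{weak_formulation_plant} with $h_P^\ve(t+\lambda)-h_P^\ve(t)=(h_P^\ve(t+\lambda)-a)-(h_P^\ve(t)-a)\in V(P^\ve)$ and reducing the left‑hand side of the second inequality in \eqref{equicont_theta_s} to $\int_0^T\la\partial_s\theta_P(h_P^\ve(s)),\Psi^\ve(s)\ra_{V(P^\ve)'}\,ds$, with $\Psi^\ve$ defined as $\Phi^\ve$. The point needing care is the $\ve$‑degeneracy of the root permeability. Writing $I_\ve=I_{\sqrt\ve}^2$ with $I_{\sqrt\ve}=\mathrm{diag}(\sqrt\ve,\sqrt\ve,1)$ one factorises
\[
I_\ve K_P(h_P^\ve)(\nabla h_P^\ve+e_3)=I_{\sqrt\ve}\big[\widetilde K_P(h_P^\ve)\,(I_{\sqrt\ve}\nabla h_P^\ve+e_3)\big],
\]
where $\widetilde K_P$ collects the $\ve$‑independent factors of $I_\ve K_P$ and is bounded; the corresponding term then equals $\la\widetilde K_P(h_P^\ve)(I_{\sqrt\ve}\nabla h_P^\ve+e_3),I_{\sqrt\ve}\nabla\Psi^\ve\ra_{P^\ve}$, in which $I_{\sqrt\ve}\nabla h_P^\ve$ is bounded in $L^2(P^\ve_T)$ uniformly in $\ve$ by \eqref{apriori_1_1}, and $\|I_{\sqrt\ve}\nabla\Psi^\ve\|_{L^2(P^\ve_T)}\le C\lambda$ because the same anisotropic weight sits on both factors. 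The two remaining boundary terms are handled with $\sqrt\ve\,\|h_P^\ve-h_S^\ve\|_{L^2(\Gamma_{P,T}^\ve)}\le C$, the $\ve$‑uniform trace inequalities \eqref{eqn15_lemma_2scale_convergence_1} and \eqref{eqn19_lemma_2scale_convergence_1} applied to $\Psi^\ve$ (using $\|\Psi^\ve\|_{L^2(P^\ve_T)}+\|I_{\sqrt\ve}\nabla\Psi^\ve\|_{L^2(P^\ve_T)}\le C\lambda$), and the boundedness of $\mathcal T_{\mathrm{pot}}$. All contributions are $O(\lambda)$ uniformly in $\ve$, which gives the second inequality.

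The main obstacle is exactly this last point for the root equation: the horizontal gradient of $h_P^\ve$ is controlled only by $C\ve^{-1/2}$, so the naive pairing of $I_\ve K_P(\nabla h_P^\ve+e_3)$ with $\nabla\Psi^\ve$ is not obviously uniform in $\ve$, and one must exploit that $\Psi^\ve$ is assembled from $h_P^\ve$ itself so that the anisotropic scaling $I_{\sqrt\ve}$ distributes symmetrically over the two factors, while at the same time checking that every trace and extension constant entering the boundary terms is $\ve$‑independent. A secondary, more routine difficulty is the rigorous justification of the fundamental‑theorem‑of‑calculus identity with the $t$‑dependent test function $h_J^\ve(t+\lambda)-h_J^\ve(t)$ and of the interchange of the $s$‑ and $t$‑integrations; this can be done either by first mollifying the equation in time, or simply by noting that, after substituting the weak formulation, all integrands belong to $L^1$ so that Fubini's theorem applies.
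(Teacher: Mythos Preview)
Your proposal is correct and follows essentially the same approach as the paper: test the weak formulation with the Steklov-type average $\Phi^\ve(s)=\int_{\max\{0,s-\lambda\}}^{\min\{s,T-\lambda\}}(h_S^\ve(t+\lambda)-h_S^\ve(t))\,dt$ (which coincides, up to the explicit factor $\lambda$, with the paper's $\psi_\lambda^\ve$), rewrite the time-derivative term via integration by parts/Fubini, and bound the remaining elliptic and boundary contributions by the $\ve$-uniform estimates of Lemma~\ref{lemma:two-scale_convergence_1}. Your explicit factorisation $I_\ve=I_{\sqrt\ve}^2$ and the observation that the anisotropic weight distributes symmetrically over the two gradient factors is exactly the mechanism that makes the root estimate $\ve$-uniform; the paper leaves this implicit under ``analogous calculations'', so your write-up is in fact more transparent on this point.
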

\begin{proof}
Considering as a test function in~\eqref{weak_formulation_soil} the following function
$$ \psi^\varepsilon_\lambda(h_S^\ve):= \frac{1}{\lambda}\int_{t-\lambda}^t \!\!\big(h_S^\varepsilon(s+\lambda)-h_S^\varepsilon(s)\big)\chi_{(0,T-\lambda]}(s) ds,
$$
using  an integration by parts, and applying the Cauchy-Schwarz inequality  yield
\begin{equation*} 
\begin{aligned}
&\big \langle  {\theta}_{S}^\varepsilon(\cdot, h_S^\varepsilon(\cdot+\lambda))- {\theta}_{S}^\varepsilon(\cdot, h_S^\varepsilon), h_S^\varepsilon(\cdot+\lambda)-h_S^\varepsilon\big \rangle_{S^\ve_{T-\lambda}}= \lambda \big\langle\partial_t {\theta}_{S}^\varepsilon(\cdot, h_S^\varepsilon),\psi^\varepsilon_\lambda\big\rangle_{V^\prime(S^\ve), T-\lambda} \\
&  \leq
\lambda \big[\|K_S^\varepsilon(\cdot, h_S^\varepsilon)\nabla h_S^\varepsilon\|_{L^2( S^\varepsilon_{T-\lambda})} + \|K_S^\varepsilon(\cdot, h_S^\varepsilon)\|_{L^2(S^\varepsilon_{T-\lambda})} \big] \|\nabla \psi^\varepsilon_\lambda\|_{L^2( S^\varepsilon_{T-\lambda})}
\\
&\qquad  +  \lambda  \ve  k_\Gamma\big[\|h_S^\varepsilon\|_{L^2(\Gamma_{P, T-\lambda}^\varepsilon)}
 +\|h_P^\varepsilon\|_{L^2( \Gamma_{P, T-\lambda}^\varepsilon)}\big]
\|\psi^\varepsilon_\lambda\|_{L^2(\Gamma_{P, T-\lambda}^\varepsilon)} \\
& \qquad + \lambda\|f(h_S^\varepsilon)\|_{L^2((0,T-\lambda)\times \Gamma_{S,0}^\varepsilon)} \|\psi^\varepsilon_\lambda\|_{L^2((0,T-\lambda)\times \Gamma_{S,0}^\varepsilon)}.
\end{aligned}
\end{equation*}
A priori estimates in  Lemma~\ref{lemma:two-scale_convergence_1} and   the uniform boundedness of $K_S^\varepsilon$ and $f$ imply the first estimate in~\eqref{equicont_theta_s}. Analogous calculations, with $\psi_\lambda^\ve(h_P^\varepsilon)$  as a test function in~\eqref{weak_formulation_plant}, yield the second estimate in~\eqref{equicont_theta_s}.
\end{proof}

To show the strong two-scale convergence of ${\theta}_S^\varepsilon(x, h_S^\varepsilon)$ we use the  unfolding operator
$\mathcal{T}^\varepsilon:L^p((0,T)\times S^\varepsilon)\to L^p((0,T)\times\Omega\times \hat S)$, for $1<p<\infty$, given by
$$
\mathcal{T}^\varepsilon(h_S^\varepsilon)(t,x,y)=h_S^\varepsilon\Big(t,\varepsilon\Big[\frac{\hat{x}}{\varepsilon}\Big]_{Y}+\varepsilon y, x_3\Big) \qquad \text{ for } \; t \in (0,T), \; x \in \Omega, \; y \in \hat S,
$$
where     $\hat S = Y \setminus \overline Y_P$, $\hat{x}=(x_1,x_2)$,~$y=(y_1,y_2)$,  and~$[\frac{\hat{x}}{\varepsilon}]_{Y}$ is the unique integer combination, see e.g.~\citep{cioranescu2018periodic}. We have a similar definition  for $\mathcal{T}^\varepsilon:L^p((0,T)\times P^\varepsilon)\to L^p((0,T)\times\Omega\times Y_P)$.

\begin{lemma}\label{lemma:two-scale_convergence_theta_S}
Under Assumption~\ref{assumption}, for  solutions of \eqref{richards_soil},~\eqref{bcs_soil}  we have
$$
\begin{aligned}
 & \mathcal{T}^\varepsilon(h_S^\varepsilon) \to h_S && \text{ a.e.~in }  (0,T)\times \Omega \times \hat S, \\
 & {\theta}_S^\varepsilon(x, h_S^\varepsilon)\to {\theta}_S(y,h_S)=\chi_{\hat R}(y) {\theta}_{R}(h_S) + \chi_{\hat B}(y) {\theta}_{B}(h_S) \quad && \text{strongly two-scale}.
\end{aligned}
$$
\end{lemma}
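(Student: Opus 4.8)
The plan is to transfer everything onto the fixed cylinder $(0,T)\times\Omega\times\hat S$ by means of the unfolding operator $\mathcal T^\varepsilon$, prove compactness of the unfolded sequence $\mathcal T^\varepsilon\big(\theta_S^\varepsilon(\cdot,h_S^\varepsilon)\big)$ via the Aubin--Lions--Simon lemma~\cite{simon1986compact}, and then identify the limit by inverting the nonlinearity and invoking the weak two-scale convergence of $h_S^\varepsilon$ from Lemma~\ref{lem:conver_hS}. First I would combine the equicontinuity estimate \eqref{equicont_theta_s} of Lemma~\ref{lemma:two_scale_convergence_3} with the Lipschitz continuity and monotonicity of $\theta_R,\theta_B$, exactly as in \eqref{eqn_kolmogorov_compactness}, to obtain
\begin{equation*}
\big\|\theta_S^\varepsilon(\cdot,h_S^\varepsilon(\cdot+\lambda))-\theta_S^\varepsilon(\cdot,h_S^\varepsilon)\big\|^2_{L^2(S^\varepsilon_{T-\lambda})}\le L_{\theta_S}\,\big\langle\theta_S^\varepsilon(\cdot,h_S^\varepsilon(\cdot+\lambda))-\theta_S^\varepsilon(\cdot,h_S^\varepsilon),\,h_S^\varepsilon(\cdot+\lambda)-h_S^\varepsilon\big\rangle_{S^\varepsilon_{T-\lambda}}\le C\lambda,
\end{equation*}
with $C$ independent of $\varepsilon$.

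Setting $G^\varepsilon:=\mathcal T^\varepsilon\big(\theta_S^\varepsilon(\cdot,h_S^\varepsilon)\big)$ on $(0,T)\times\Omega\times\hat S$, the $Y$-periodicity of $\theta_S(\cdot,z)$ and the fact that $\mathcal T^\varepsilon$ freezes the macroscopic variable give $G^\varepsilon=\theta_S\big(y,\mathcal T^\varepsilon(h_S^\varepsilon)\big)$; hence $G^\varepsilon$ is bounded in $L^\infty$ by Assumption~\ref{assumption}(A1), while $\nabla_y G^\varepsilon=\theta_S'\big(y,\mathcal T^\varepsilon(h_S^\varepsilon)\big)\,\varepsilon\,\mathcal T^\varepsilon(\nabla_{\hat x}h_S^\varepsilon)$ on each of $(0,T)\times\Omega\times\hat R$ and $(0,T)\times\Omega\times\hat B$, and $\partial_{x_3}G^\varepsilon=\theta_S'\big(y,\mathcal T^\varepsilon(h_S^\varepsilon)\big)\,\mathcal T^\varepsilon(\partial_{x_3}h_S^\varepsilon)$, both bounded uniformly in $\varepsilon$ by the a priori estimates of Lemma~\ref{lemma:two-scale_convergence_1}. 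Since $\mathcal T^\varepsilon$ is, up to the boundary-layer cells of vanishing measure, an $L^2$-isometry commuting with translations in $t$, the estimate above yields uniform time-equicontinuity of $G^\varepsilon$ in $L^2$; equicontinuity in $y$ and $x_3$ follows from the bounds on $\nabla_y G^\varepsilon$ and $\partial_{x_3}G^\varepsilon$ (the $y$-translates crossing the fixed interface $\partial\hat R$ contributing only an $o(1)$ boundary-layer term); and equicontinuity in $\hat x$, although $G^\varepsilon$ is only piecewise constant in $\hat x$ on the $\varepsilon$-cells, is controlled uniformly in $\varepsilon$ by the $\varepsilon$-independent $H^1(\Omega)$-bound \eqref{eqn16_lemma_2scale_convergence_1} on the Sobolev extension $\hat h_S^\varepsilon$ of $h_S^\varepsilon$ (splitting a generic shift into an integer-cell shift, controlled by $\|\nabla\hat h_S^\varepsilon\|_{L^2}$, plus a sub-cell shift, controlled by the discrete $\ell^2$-bound on the cell-to-cell increments of $G^\varepsilon$, again through $\|\nabla\hat h_S^\varepsilon\|_{L^2}$). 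By the Aubin--Lions--Simon compactness lemma~\cite{simon1986compact}, $\{G^\varepsilon\}$ is then relatively compact in $L^2\big((0,T)\times\Omega\times\hat S\big)$, so, up to a subsequence, $G^\varepsilon\to g$ strongly in $L^2$ and a.e.

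Finally I would identify $g$. Since $\theta_R$ and $\theta_B$ are strictly increasing and continuous, $\theta_S(y,\cdot)$ admits a continuous inverse, whence $\mathcal T^\varepsilon(h_S^\varepsilon)=\theta_S^{-1}\big(y,G^\varepsilon\big)\to\theta_S^{-1}(y,g)$ a.e.\ in $(0,T)\times\Omega\times\hat S$, the limit being finite a.e.\ (otherwise Fatou's lemma contradicts the uniform $L^2$-bound on $\mathcal T^\varepsilon(h_S^\varepsilon)$ from Lemma~\ref{lemma:two-scale_convergence_1}). On the other hand, the two-scale convergence of $\hat h_S^\varepsilon$ in \eqref{eqn_two_scale_convergence_weak_convergence_soil_extension} means $\mathcal T^\varepsilon(\hat h_S^\varepsilon)\rightharpoonup h_S$ weakly in $L^2\big((0,T)\times\Omega\times Y\big)$, and since $\hat h_S^\varepsilon=h_S^\varepsilon$ on $S^\varepsilon$ this restricts to $\mathcal T^\varepsilon(h_S^\varepsilon)\rightharpoonup h_S$ weakly in $L^2\big((0,T)\times\Omega\times\hat S\big)$; as the a.e.\ limit of an $L^2$-bounded sequence coincides with its weak $L^2$-limit, we get $\theta_S^{-1}(y,g)=h_S$, that is, $\mathcal T^\varepsilon(h_S^\varepsilon)\to h_S$ a.e.\ (the first claim) and $g=\theta_S(y,h_S)$, so $\mathcal T^\varepsilon\big(\theta_S^\varepsilon(\cdot,h_S^\varepsilon)\big)\to\theta_S(y,h_S)$ strongly in $L^2$, which is precisely the asserted strong two-scale convergence; alternatively the second convergence follows from $\mathcal T^\varepsilon(h_S^\varepsilon)\to h_S$ a.e.\ together with the continuity and boundedness of $\theta_R,\theta_B$ by dominated convergence. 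The main obstacle is the compactness step: reconciling the $\varepsilon$-scale piecewise-constant structure of the unfolded sequence in the macroscopic variable $\hat x$ with an $\varepsilon$-uniform modulus of continuity, and, because $\theta_S(y,\cdot)^{-1}$ is merely continuous and not Lipschitz, carrying out the identification of the limits through a.e.\ convergence and the weak two-scale limit rather than by a direct transfer of strong convergence.
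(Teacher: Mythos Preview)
Your proposal is correct and follows essentially the same strategy as the paper: unfold, verify Kolmogorov--Simon equicontinuity in $(t,x,y)$ using the time-shift estimate \eqref{equicont_theta_s} together with the gradient bounds from Lemma~\ref{lemma:two-scale_convergence_1}, apply \cite{simon1986compact}, and identify the limit by inverting the strictly increasing $\theta_J$ and matching with the weak two-scale limit from Lemma~\ref{lem:conver_hS}. The only presentational difference is that the paper works on $(0,T)\times\Omega\times\hat R$ and $(0,T)\times\Omega\times\hat B$ separately from the outset (so that $\theta_J$ is smooth in the $y$-slab and the interface $\Gamma_R$ never intervenes), whereas you work on $\hat S$ globally and handle the jump of $\theta_S(y,\cdot)$ across $\partial\hat R$ as an $o(1)$ boundary-layer contribution; the paper's split is cleaner and makes your boundary-layer remark unnecessary.
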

\begin{proof}
First we show the strong convergence of  $ {\theta}_{R}(\mathcal{T}^\varepsilon(h_S^\varepsilon))$  in~$L^2((0,T)\times\Omega\times \hat R)$ and of ${\theta}_{B}(\mathcal{T}^\varepsilon(h_S^\varepsilon))$  in~$L^2((0,T)\times\Omega\times \hat B)$. The Lipschitz continuity of $\theta_J$, with $J=R,B$,  properties of the unfolding operator, see e.g.~\cite{cioranescu2018periodic}, and Lemma~\ref{lemma:two_scale_convergence_3} imply
\begin{equation*}\label{relative_compact_x}
\begin{aligned}
&\Big\|\int_{t_1}^{t_2} \!\! {\theta}_J\big(\mathcal{T}^\varepsilon(h_S^\varepsilon)(t, \cdot+r, \cdot)\big) dt  - \int_{t_1}^{t_2} \!\! {\theta}_J\big(\mathcal{T}^\varepsilon(h_S^\varepsilon)\big)dt\Big\|_{L^2(\Omega\times \hat J)}^2\\
&  \leq  C_1\big\|{\theta}_J\big(h_S^\varepsilon(\cdot, \cdot +r)\big) - {\theta}_J\big(h_S^\varepsilon\big)\big\|^2_{L^2((t_1, t_2)\times J^\ve)}
\leq C_2|r|^2 \|\nabla{h_S^\varepsilon}\|_{L^2((t_1, t_2)\times J^\varepsilon)}^2 \leq  C |r|^2,\\
&\Big\|\int_{t_1}^{t_2}\!\! {\theta}_J\big(\mathcal{T}^\varepsilon(h_S^\varepsilon)(t, \cdot, \cdot+r_1)\big)dt - \int_{t_1}^{t_2}\!\! {\theta}_J\big(\mathcal{T}^\varepsilon(h_S^\varepsilon)\big)dt\Big\|_{L^2(\Omega\times \hat J)}^2\\
&    \leq C_1 |r_1|^2 \| \nabla_y \mathcal{T}^\varepsilon\big( {\theta}_J(h_S^\varepsilon) \big) \big\|_{L^2((t_1, t_2)\times \Omega \times \hat J)}^2
\leq C_2 |r_1|^2 \ve^2 \|\nabla{h_S^\varepsilon}\|_{L^2((t_1, t_2)\times J^\varepsilon)}^2 \leq  C |r_1|^2,\\
& \big \| {\theta}_{J}(\mathcal{T}^\varepsilon(h_S^\varepsilon)(\cdot+\lambda, \cdot, \cdot))
- {\theta}_{J}(\mathcal{T}^\varepsilon(h_S^\varepsilon))\big\|^2_{L^2(\Omega_{T-\lambda} \times \hat J)}
\\ & \quad  \leq C_1\int_0^{T-\lambda}\!\!\!\big\langle  {\theta}_{S}^\varepsilon(\cdot, h_S^\varepsilon(t+\lambda))- {\theta}_{S}^\varepsilon(\cdot, h_S^\varepsilon(t)), h_S^\varepsilon(t + \lambda)-h_S^\varepsilon(t)\big \rangle_{S^\ve} dt \leq  C\lambda,
\end{aligned}
\end{equation*}
for~$r\in\mathbb{R}^3$, $r_1 \in \mathbb R^2$, and $\lambda > 0$, where $J=R,B$ and  $C_1, C_2, C >0$ are independent of $\ve$.
Combining the estimates from above and using the compactness result in~\citep{simon1986compact}, yield the existence of $z_R \in L^2((0,T)\times\Omega\times \hat R)$ and $z_B\in L^2((0,T)\times\Omega\times \hat B)$  such that
\begin{equation}\label{two-scale:strong_convergence_something_R}
{\theta}_{J}\big(\mathcal{T}^\varepsilon(h_S^\varepsilon)\big)\to z_J\quad\text{strongly in } \;\; L^2((0,T)\times\Omega\times \hat J), \quad \text{ for } \; \;  J=R,B.
\end{equation}
Since~$ {\theta}_{B}$ and~$ {\theta}_{R}$ are strictly increasing and continuous, we have
\begin{equation}\label{two-scale:unfolded_a_e_convergence_RB}
\mathcal{T}^\varepsilon(h_S^\varepsilon)=  {\theta}_{J}^{-1}\big( {\theta}_{J}\big(\mathcal{T}^\varepsilon(h_S^\varepsilon)\big)\big)\to {\theta}_{J}^{-1}(z_J)\quad\text{a.e.~in }(0,T)\times\Omega\times \hat J,
\end{equation}
for $ J=R,B$. The two-scale convergence of $h_S^\ve$ implies $\mathcal{T}^\varepsilon(h_S^\varepsilon)\rightharpoonup h_S$ in $L^2((0,T)\times\Omega\times \hat S)$, see e.g.~\cite{cioranescu2018periodic}.
Using~\eqref{two-scale:strong_convergence_something_R}  and~\eqref{two-scale:unfolded_a_e_convergence_RB}, together with $\mathcal{T}^\varepsilon( {\theta}_{J}(h_S^\varepsilon))=   {\theta}_{J}(\mathcal{T}^\varepsilon(h_S^\varepsilon))$,  ensures
$\mathcal{T}^\varepsilon\big( {\theta}_{J}(h_S^\varepsilon)\big) \to  {\theta}_{J}(h_S)$ strongly in  $L^2\big((0,T)\times\Omega\times \hat J\big)$
and  ${\theta}_{J}(h_S^\varepsilon)\to {\theta}_{J}(h_S)$ strongly two-scale,  for $J=R,B$. Then  ${\theta}_S^\varepsilon(x,h_S^\varepsilon) = \chi_{\hat R}(\hat x/\ve) {\theta}_{R}(h_S^\varepsilon) + \chi_{\hat B}(\hat x/\ve) {\theta}_{B}(h_S^\varepsilon)$ converges strongly two-scale to ${\theta}_S(\cdot,h_S) \in L^2((0,T)\times \Omega\times \hat S)$.
\end{proof}	
\begin{lemma}\label{lemma:strong_conv_hP}
Under Assumption~\ref{assumption}, for  solutions of \eqref{richards_root},~\eqref{bcs_root}  we have
\begin{equation}\label{twoscale_conv_hp}
\begin{aligned}
& \mathcal T^\ve (h^\ve_P) \to h_P && \text{ a.e.~in } \, (0,T)\times \Omega\times Y_P, \\
& {\theta}_P(h_P^\varepsilon) \to {\theta}_P(h_P)\chi_{Y_P} && \text{ strongly  two-scale}.
\end{aligned}
\end{equation}
\end{lemma}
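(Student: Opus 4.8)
The plan is to deduce both assertions from the strong $L^2$-convergence of the unfolded sequence $\mathcal{T}^\ve(\theta_P(h_P^\ve))=\theta_P(\mathcal{T}^\ve(h_P^\ve))$ in $L^2((0,T)\times\Omega\times Y_P)$. Since $\theta_P$ is bounded, this sequence is bounded in $L^2((0,T)\times\Omega\times Y_P)$, so by the Aubin--Lions--Simon lemma of~\citep{simon1986compact} --- whose hypotheses reduce to uniform-in-$\ve$ equicontinuity in time together with relative compactness of the time-averaged functions in $L^2(\Omega\times Y_P)$, the latter following from equicontinuity in the spatial variables $x_3$, $y$, $\hat x$ --- it suffices to verify equicontinuity, uniformly in $\ve$, under translations in each of $t$, $x_3$, $y$ and $\hat x$.

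Three of these are immediate from the a priori bounds. For $t$, the Lipschitz continuity and monotonicity of $\theta_P$ together with Lemma~\ref{lemma:two_scale_convergence_3} give $\|\theta_P(h_P^\ve(\cdot+\lambda))-\theta_P(h_P^\ve)\|_{L^2((0,T-\lambda)\times P^\ve)}^2\le C\lambda$, which passes to $\theta_P(\mathcal{T}^\ve(h_P^\ve))$ through the standard properties of $\mathcal{T}^\ve$. For $x_3$ one uses that $\partial_{x_3}\mathcal{T}^\ve(h_P^\ve)=\mathcal{T}^\ve(\partial_{x_3}h_P^\ve)$ is bounded in $L^2$ by~\eqref{apriori_1_1}, and for $y$ one has $\nabla_y\mathcal{T}^\ve(h_P^\ve)=\ve\,\mathcal{T}^\ve(\nabla_{\hat x}h_P^\ve)$ with $\|\ve\nabla_{\hat x}h_P^\ve\|_{L^2(P^\ve_T)}\le C\ve^{1/2}\to0$, again by~\eqref{apriori_1_1}; in both cases composing with the Lipschitz map $\theta_P$ gives the desired modulus.

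The essential point is equicontinuity in the slow variable $\hat x$, and here the $\ve$-scaled horizontal permeability obstructs the approach of Lemma~\ref{lemma:two-scale_convergence_theta_S}: there a horizontal translate of $h_S^\ve$ is estimated by $|r|\,\|\nabla_{\hat x}h_S^\ve\|_{L^2}$, whereas here only $\ve^{1/2}\nabla_{\hat x}h_P^\ve$ is bounded, so the naive bound would be of order $|r|^2/\ve$. Instead one invokes the variable-doubling method. Unfolding~\eqref{richards_root}--\eqref{bcs_root} exhibits $\mathcal{T}^\ve(h_P^\ve)(\cdot,\hat x,\cdot,\cdot)$, for a.e.\ fixed $\hat x$, as the solution of a Richards-type problem on $(0,T)\times(-L_3,0)\times Y_P$ in which $\hat x$ enters only through the soil trace $\mathcal{T}^\ve(h_S^\ve)$ in the Robin condition on $\Gamma_P$ and through the initial datum, the data $\mathcal{T}_{\text{pot}}$ and $a$ being $\hat x$-independent. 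Running the doubling-of-the-time-variable / $L^1$-contraction argument of the proof of Theorem~\ref{theorem_microscopic_models_uniqueness} between $\mathcal{T}^\ve(h_P^\ve)(\cdot,\hat x+r,\cdot,\cdot)$ and $\mathcal{T}^\ve(h_P^\ve)(\cdot,\hat x,\cdot,\cdot)$, retaining the extra term produced by the difference of the data, and integrating in $\hat x$, leads to
\[
\big\|\theta_P(\mathcal{T}^\ve(h_P^\ve))(\cdot,\cdot+r,\cdot,\cdot)-\theta_P(\mathcal{T}^\ve(h_P^\ve))\big\|_{L^1((0,T)\times\Omega\times Y_P)}\ \le\ C\,\omega(|r|),
\]
where $\omega(|r|)$ is the $\hat x$-translation modulus of the unfolded data $\mathcal{T}^\ve(h_S^\ve)$ on $\Gamma_P$ and $\mathcal{T}^\ve(h_{P,0})$; since $h_S^\ve$ is bounded in $L^2(0,T;V(S^\ve))$ \emph{without} $\ve$-degeneracy, $\omega$ is controlled uniformly in $\ve$ with $\omega(|r|)\to0$ as $r\to0$, using the uniform extension of Lemma~\ref{lemma:two-scale_convergence_1} (and the natural requirement that $h_{P,0}$ be the restriction of a fixed function). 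Interpolating this $L^1$-equicontinuity against the uniform $L^\infty$-bound on $\theta_P$ upgrades it to $L^2$. Carrying out this step carefully --- writing down the unfolded cell problem solved by $\mathcal{T}^\ve(h_P^\ve)$ and controlling the boundary terms over the oscillating surface $\Gamma_P^\ve$ in the contraction argument --- is where I expect the bulk of the work, and the main difficulty, to lie.

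With equicontinuity in all four variables and the uniform $L^2$-bound, the compactness lemma yields a subsequence along which $\theta_P(\mathcal{T}^\ve(h_P^\ve))\to z$ strongly in $L^2((0,T)\times\Omega\times Y_P)$, hence a.e.; as $\theta_P$ is continuous and strictly increasing it has a continuous inverse, so $\mathcal{T}^\ve(h_P^\ve)=\theta_P^{-1}(\theta_P(\mathcal{T}^\ve(h_P^\ve)))\to\theta_P^{-1}(z)$ a.e.\ in $(0,T)\times\Omega\times Y_P$. The weak two-scale convergence $\widetilde{h_P^\ve}\rightharpoonup h_P\,\chi_{Y_P}$ of Lemma~\ref{lemma:two-scale_convergence_2}, which in unfolding terms reads $\mathcal{T}^\ve(h_P^\ve)\rightharpoonup h_P$ weakly in $L^2((0,T)\times\Omega\times Y_P)$, forces $\theta_P^{-1}(z)=h_P$, i.e.\ $\mathcal{T}^\ve(h_P^\ve)\to h_P$ a.e., which is the first claim; and, extending $\theta_P(\mathcal{T}^\ve(h_P^\ve))$ by $0$ onto $Y\setminus\overline Y_P$, the dominated convergence theorem (dominant $\max\{|\theta_{P,\min}|,\theta_{P,\max}\}$) gives $\theta_P(h_P^\ve)\to\theta_P(h_P)\,\chi_{Y_P}$ strongly two-scale, which is the second.
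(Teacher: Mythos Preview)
Your proposal is correct and follows essentially the same route as the paper: equicontinuity in $t$, $x_3$, and $y$ comes from the a priori estimates and Lemma~\ref{lemma:two_scale_convergence_3}, while the crucial $\hat x$-equicontinuity is obtained via the time-doubling $L^1$-contraction argument of Theorem~\ref{theorem_microscopic_models_uniqueness}, controlling the difference through the boundary datum $h_S^\ve$ on $\Gamma_P^\ve$ (which enjoys the non-degenerate gradient bound) and the initial datum. The only tactical difference is that the paper runs the contraction argument directly in the physical domain $P_\varsigma^\ve$ using integer-$\ve$ translates $h_P^{\ve,l}(t,\hat x+\ve l,x_3)$ and then converts to the unfolded setting via $l_k=k+[z/\ve]$, $k\in\{0,1\}^2$, rather than unfolding first as you suggest; the two formulations are equivalent.
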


\begin{proof}
To prove strong convergence of ${\theta}_P(h_P^\varepsilon)$ we first  show the equicontinuity of
$\mT^\ve(\theta_P(h^\ve_P))$ by using  similar arguments as in the proof of the uniqueness result in Theorem~\ref{theorem_microscopic_models_uniqueness}.
Consider $\hat \Omega = \Omega \cap \{ x_3 =0\}$ and
$\Omega_\varsigma =\{ x \in \Omega : {\rm dist} ( \hat x, \partial \hat \Omega ) \geq \varsigma\}$, with $\varsigma>0$ and $\hat x = (x_1, x_2)$, and define
$$
\begin{aligned}
\hat P^\ve_\varsigma =  \bigcup\limits_{\xi \in \Xi_\varsigma^\ve} \ve(Y_P + \xi), \quad  \hat \Gamma^\ve_{P_\varsigma} =  \bigcup\limits_{\xi \in \Xi_\varsigma^\ve} \ve(\Gamma_P + \xi),\;
\end{aligned}
$$
and $P^\ve_\varsigma = \hat P^\ve_\varsigma \times (-L_3, 0)$, $\Gamma^\ve_\varsigma = \hat \Gamma^\ve_{P_\varsigma} \times (-L_3, 0)$,
where $\Xi^\ve_\varsigma = \{ \xi \in \Xi^\ve :  \ve(Y + \xi)\times (-L_3, 0) \subset \Omega_\varsigma\}$.
For  $l\in\mathbb{Z}^2$, such that~$|l\varepsilon|<\varsigma$, and  $\psi^{-l}(t,x) = \psi(t,\hat{x}-l\varepsilon, x_3)$, where $\psi\in C^1_0(0,T;V(P_\varsigma^\varepsilon))$, we have  $\psi^{-l}\in C^1_0(0,T;V(P_{\varsigma, l}^\varepsilon))$, with
$P_{\varsigma, l}^\varepsilon=\{x+(\varepsilon l_1, \varepsilon l_2, 0)^T: x\in P_\varsigma^\varepsilon\}$.
Using $\psi^{-l}$ in the weak formulation of~\eqref{richards_root} and \eqref{bcs_root} over $P_{\varsigma, l}^\varepsilon$, integrating  by parts in the time derivative  and changing variables from~$x$ to~$x-(\varepsilon l_1, \varepsilon l_2, 0)^\top$, for  $h_P^{\ve,l}(t,x) = h_P^\ve(t, \hat x+ \ve l, x_3)$, we obtain
\begin{equation}\label{l_plant_variational_problem}
\begin{aligned}
\langle\partial_t{\theta}_P(h_P^{\varepsilon, l}), \psi\rangle_{V(P^\ve_\varsigma)^\prime, T} +
\langle H_r^\varepsilon K_P(h_P^{\varepsilon, l})(\nabla h_P^{\varepsilon, l} + e_3), \nabla \psi \rangle_{P^\ve_{\varsigma, T}}
\\ + \varepsilon k_\Gamma \langle h_P^{\varepsilon, l} - h_S^{\varepsilon, l}, \psi \rangle_{\Gamma^\ve_{P_\varsigma, T}} +  \langle \mathcal{T}_\text{pot}, \psi \rangle_{(0,T)\times\Gamma^\ve_{P_\varsigma, 0}}  & =0.
\end{aligned}
\end{equation}
 Consider now the functions~$\sigma_\delta^+$, $\sigma_\delta^-$,
$\eta_{P,\delta}^+(h_P^{\varepsilon, l},w^0)$, and~$\eta_{P,\delta}^-(h_P^\varepsilon,w^0)$, with $w^0-a\in V(P_\varsigma^\varepsilon)$, as in~\eqref{our_eta_prime}.
The arguments similar to those in the proof of Theorem~\ref{theorem_microscopic_models_uniqueness} and Lemma~\ref{lemma_uniqueness_lemma_1} yield that  $\zeta^+_\tau = \int_t^{t+\tau}\!\!\sigma_\delta^+(h_P^{\varepsilon, l} - w^0)\kappa(s) ds$ and $\zeta^-_\tau = \int_t^{t+\tau}\!\!\sigma_\delta^-(h_P^{\varepsilon} - w^0)\kappa(s) ds$,  for $\tau >0$ and $\kappa\in C_0^\infty(0,T)$,
are admissible test functions in~\eqref{l_plant_variational_problem} and in the weak formulation of \eqref{richards_root} over~$P_\varsigma^\varepsilon$ respectively, and we obtain the following inequalities
\begin{equation}\label{var_form_eps_l}
\begin{aligned}
- \big\langle \eta_{P,\delta}^+(h_P^{\varepsilon, l},w^0),\frac{d\kappa}{dt}\big\rangle_{P_{\varsigma, T}^\varepsilon}
+ \big\langle H_r^\varepsilon K_P(h_P^{\varepsilon, l})(\nabla h_{P}^{\varepsilon, l} + e_3),
\nabla \sigma_\delta^+(h_P^{\varepsilon, l} - w^0)\kappa\big\rangle_{P^\ve_{\varsigma, T}}  \\
 +
\varepsilon \, k_\Gamma \big\langle h_P^{\varepsilon, l}-h_S^{\varepsilon, l}, \sigma_\delta^+(h_P^{\varepsilon, l}- w^0)\kappa\big \rangle_{\Gamma^\ve_{P_\varsigma, T}}  + \big\langle  \mathcal{T}_\text{pot}, \sigma_\delta^+(h_P^{\varepsilon, l}- w^0)\kappa \big\rangle_{\Gamma_{P_\varsigma, 0, T}^\ve}   \leq 0, \\
- \big\langle \eta_{P,\delta}^-(h_P^\varepsilon,w^0), \frac{d\kappa}{dt}\big\rangle_{P_{\varsigma, T}^\varepsilon}
+
\big\langle H_r^\varepsilon K_P(h_P^\varepsilon)(\nabla h_P^{\varepsilon} + e_3), \nabla \sigma_\delta^-(h_P^{\varepsilon}- w^0)\kappa \big\rangle_{P^\ve_{\varsigma, T}}\\
+ \varepsilon \, k_\Gamma \big\langle h_P^{\varepsilon}-h_S^{\varepsilon},\sigma_\delta^-(h_P^{\varepsilon}- w^0)\kappa \big\rangle_{\Gamma^\ve_{P_\varsigma, T}}
 + \big\langle \mathcal{T}_\text{pot}, \sigma_\delta^-(h_P^{\varepsilon}- w^0)\kappa \big \rangle_{\Gamma^\ve_{P_\varsigma, 0, T}}  \leq 0.
\end{aligned}
\end{equation}
Considering  a doubling of the time variable~$(t_1,t_2)\in(0,T)^2$,  with
$h_P^{\varepsilon,l}(x,t_1, t_2)=h_P^{\varepsilon,l}(x,t_1)$, $w^0 = h_P^\varepsilon(x,t_2)$, and $\kappa = \kappa(t_2):t_1\to \kappa(t_1,t_2)$
in the first inequality,  and $h_P^{\varepsilon}(x,t_1, t_2)=h_P^{\varepsilon}(x,t_2)$, $w^0 = h_P^{\varepsilon, l}(x,t_1)$, and
$\kappa = \kappa(t_1):t_2\to \kappa(t_1,t_2)$ in the second inequality in~\eqref{var_form_eps_l}, with non-negative $\kappa\in C_0^\infty((0,T)^2)$, and   adding the resulting inequalities yield
\begin{equation*}
\begin{aligned}
&\int_{(0,T)^2}\!\Big[-\int_{P_\varsigma^\varepsilon}\! \Big(\eta_{P,\delta}^+(h_P^{\varepsilon, l},h_P^\varepsilon)\partial_{t_1}\kappa + \eta_{P,\delta}^-(h_P^\varepsilon,h_P^{\varepsilon, l})\partial_{t_2}\kappa\Big) dx\\
			&\qquad +
\big\langle H_r^\varepsilon K_P(h_P^{\varepsilon, l})(\nabla h_P^{\varepsilon, l} + e_3) - H_r^\varepsilon K_P\big(h_P^\varepsilon\big)(\nabla h_P^\varepsilon + e_3), \nabla \sigma_\delta^+(h_P^{\varepsilon, l}- h_P^\varepsilon)\kappa \big\rangle_{P^\ve_{\varsigma}}\\
&\qquad + \varepsilon\,  k_\Gamma \big\langle (h_P^{\varepsilon, l}-h_P^\varepsilon)-(h_S^{\varepsilon, l}-h_S^\varepsilon), \sigma_\delta^+(h_P^{\varepsilon, l}- h_P^\varepsilon)\kappa \big\rangle_{\Gamma_{P_\varsigma}^\ve}\Big]dt_1dt_2\leq 0.
\end{aligned}
\end{equation*}
Taking~$\delta\to 0$ in the above inequality and applying the arguments similar to the one used in the proof of \eqref{uniqueness_inequality3_doubled_time}, 
give
\begin{eqnarray}\label{DCT_est_two_scale_h_P}
-\big\langle (\theta_P(h_P^{\varepsilon, l}) - \theta_P(h_P^\varepsilon))^+, \partial_{t_1}\kappa + \partial_{t_2}\kappa\big\rangle_{(0,T)^2 \times P^\ve_\varsigma}
+
\varepsilon  k_\Gamma \big\langle(h_P^{\varepsilon, l}-h_P^\varepsilon)^+, \kappa \big\rangle_{(0,T)^2 \times \Gamma^\ve_{P_\varsigma}} \nonumber
\\ \leq \varepsilon  k_\Gamma
\big\langle (h_S^{\varepsilon, l}-h_S^\varepsilon)^+\text{sign}^+(h_P^{\varepsilon, l}-h_P^{\varepsilon}), \kappa \big\rangle_{(0,T)^2 \times \Gamma^\ve_{P_\varsigma}}.
\end{eqnarray}
The  function $\kappa_\varrho$ defined as in \eqref{kappa_varrho}
is admissible in~\eqref{DCT_est_two_scale_h_P}, in place of~$\kappa$, and applying the change of variables~$\tau = t_1-t_2$ and denoting~$t=t_1$ we obtain
\begin{equation}\label{hp_2scale_inequality2_doubled_time_changed_variable}
\begin{aligned}
\int_{\mathbb{R}}\!\frac{1}{\varrho}\vartheta\big(\frac{\tau}{\varrho}\big)\!
\int_0^T\!\!\Big[&-\int_{P_\varsigma^\varepsilon} \!\!\big(\theta_P(h_P^{\varepsilon, l}(t)) - \theta_P(h_P^\varepsilon(t-\tau))\big)^+\partial_t\kappa\big(t-\frac \tau2\big)dx\\
&+ \varepsilon  k_\Gamma\int_{\Gamma_{P,\varsigma}^\varepsilon}\!\!\!(h_P^{\varepsilon, l}(t)-h_P^\varepsilon(t-\tau))^+\kappa\big(t - \frac \tau 2\big) d\gamma\Big] dtd\tau\\
\leq \varepsilon  k_\Gamma \!\int_{\mathbb{R}}\!\frac{1}{\varrho}\vartheta\big(\frac{\tau}{\varrho}\big)&\int_0^T \!\!\!\int_{\Gamma_{P,\varsigma}^\varepsilon}\!\!\!\!\! (h_S^{\varepsilon, l}(t)-h_S^\varepsilon(t-\tau))^+\text{sign}^+(h_P^{\varepsilon, l}-h_P^{\varepsilon})\kappa\big(t - \frac \tau 2\big) d\gamma dtd\tau.
\end{aligned}
\end{equation}
Taking~$\varrho\to0$, applying the integration by parts,
and using the compact support of~$\kappa$, imply
\begin{equation}\label{hp_2scale_inequality3_doubled_time_changed_variable}
\begin{aligned}
\int_{0}^T\!\! \! \!\!\kappa(t) \partial_t\!\!\int_{P_\varsigma^\varepsilon} \!\!\! \!\big(\theta_P(h_P^{\varepsilon, l}(t)) - \theta_P(h_P^\varepsilon(t))\big)^+\! dxdt  \leq  \varepsilon k_\Gamma  \!\! \int_{0}^T\!\! \! \!\! \kappa(t)\!\! \int_{\Gamma_{P,\varsigma}^\varepsilon} \!\!\!\! \!\!\!|h_S^{\varepsilon, l}(t)-h_S^\varepsilon(t)| d\gamma dt.
\end{aligned}
\end{equation}
Exchanging $h_P^{\varepsilon, l}$ and $h_P^{\varepsilon}$ in the calculations above yields~\eqref{hp_2scale_inequality3_doubled_time_changed_variable} for $(\theta_P(h_P^{\varepsilon}) - \theta_P(h_P^{\varepsilon, l}))^+$.
Applying the trace theorem over the unit cell~$Y$, together with the standard scaling argument,   and using estimates for $h^\ve_S$ in Lemma~\ref{lemma:two-scale_convergence_1}, yield
\begin{equation}\label{estim_hPl-hP}
\int_{0}^T\!\!\!\!\kappa(t) \, \partial_t\int_{P_\varsigma^\varepsilon} |\theta_P(h_P^{\varepsilon, l}(t,x)) - \theta_P(h_P^\varepsilon(t,x))| dxdt \leq  C \ve l
\end{equation}
for any non-negative $\kappa \in C^\infty_0(0,T)$. Using the regularity of  initial conditions and Lipschitz continuity of $\theta_P$, from \eqref{estim_hPl-hP} we obtain
$$
\|\theta_P(h_P^{\varepsilon, l}(t)) - \theta_P(h_P^\varepsilon(t))\|_{L^1(P^\ve_\varsigma)} \leq  C \ve l, \quad \text{ for } \;  t \in (0,T].
$$
Considering $|z|\leq \varsigma$ and using the definition of the unfolding operator  imply
 \begin{equation}\label{equicont}
 \begin{aligned}
 &\int_{\Omega_T}\! \int_{Y_P}\!\! |\theta_P(\mT^\ve (h^{\ve}_P)(t, \hat x +z, x_3,y)) - \theta_P(\mT^\ve (h^{\ve}_P)(t, x, y))|dy dx dt \\
 & \quad \leq \sum_{k \in \{0,1\}^2} \int_0^T\!\!\!\int_{P^\ve_\varsigma} \!\!|\theta_P(h^{\ve,l_k}_P(t,x)) - \theta_P(h^{\ve}_P(t,x))| dxdt \leq \kappa(\varsigma) \to 0
 \end{aligned}
 \end{equation}
as $\varsigma \to 0$, where $l_k = k+ [z/\ve]$ and $|\ve l_k| \leq \varsigma$ for all $\ve >0$, such that $\ve \leq \ve_0$ for some $\ve_0>0$.
For the finite number of $\ve > \ve_0$ estimate~\eqref{equicont} follows from the continuity of the~$L^2$-norm. The  estimates for  $\ve \nabla_{\hat x} h^\ve_P$ and $\partial_{x_3}  h^\ve_P$  ensure, for  $r \in \mathbb R^2$ and $r_1 \in \mathbb R$,
 \begin{equation}\label{equicont_3}
 \begin{aligned}
 &  \|\theta_P(\mT^\ve (h^{\ve}_P)(\cdot, \cdot, \cdot,  y+r)) - \theta_P(\mT^\ve (h^{\ve}_P))\|^2_{L^2(\Omega_T\times Y_P)}  \\
 & \qquad  \leq C |r|^2 \|\nabla_y \mT^\ve(\theta_P(h^{\ve}_P))\|^2_{L^2(\Omega_T \times Y_P)}
  \leq C|r|^2\ve^2 \|\nabla_{\hat x} h^{\ve}_P\|^2_{L^2(P^\ve_T)} \leq C |r|^2, \\
 & \|\theta_P(\mT^\ve (h^{\ve}_P)(\cdot, \cdot, x_3+ r_1,\cdot)) - \theta_P(\mT^\ve (h^{\ve}_P))\|^2_{L^2(\Omega_T\times Y_P)}  \\ & \qquad \leq   C r_1^2 \|\partial_{x_3} h^{\ve}_P\|^2_{L^2(P^\ve_T)} \leq C r_1^2.
 \end{aligned}
 \end{equation}
 Using  \eqref{equicont}, \eqref{equicont_3}, and the second estimate in~\eqref{equicont_theta_s}, and applying the compactness theorem, see~\cite{simon1986compact} and boundedness of $\theta_P$, yields the strong convergence of $\theta_P(\mT^\ve (h^{\ve}_P))$ in $L^2((0,T)\times\Omega \times Y_P)$.
 This, together with the monotonicity and continuity of $\theta_P$ and two-scale convergence of $h^\ve_P$ to $h_P \chi_{Y_P}$,  implies  $\mathcal T^\ve( h^\ve_P) \to  h_P$ a.e.~in $(0,T)\times \Omega \times Y_P$. Hence
 $\mT^\ve (\theta_P(h^{\ve}_P)) \to \theta_P(h_P)$ strongly in $L^2((0,T)\times \Omega \times Y_P)$, which, applying the  properties of the unfolding operator, implies the strong two-scale convergence of $\theta_P(h^{\ve}_P)$, stated in the lemma.
 \end{proof}

 \begin{theorem}\label{theorem: limit model}
  Under Assumption~\ref{assumption},  a sequence of solutions $(h_S^\ve, h_P^\ve)$ of microscopic model~\eqref{richards_soil}--\eqref{bcs_root}  converges to  solution $h_S \in L^2(0,T; V(\Omega))$,  $h_P-a \in L^2(0,T; U(\Omega))$ of the macroscopic problem
  \begin{equation} \label{macro_model}
   \begin{aligned}
    & \partial_t \theta_S^\ast(h_S) - \nabla \cdot(K_{S,\rm hom}(h_S) (\nabla h_S + e_3)) =  k_\Gamma \vartheta_\Gamma (h_P - h_S) && \text{in }   (0,T)\times \Omega,  \\
    &  \partial_t \theta_P(h_P) -  \partial_{x_3} (\tilde K_P(h_P)(\partial_{x_3} h_P + 1)) = k_\Gamma  \vartheta_{\Gamma,P} (h_S - h_P) && \text{in }   (0,T)\times \Omega, \\
    & K_{S, \rm hom}(h_S) (\nabla h_S + e_3) \cdot \nu = 0 \quad &&  \text{on }  (0,T)\times \Gamma_N, \\
    & K_{S, \rm hom}(h_S) (\nabla h_S + e_3) \cdot \nu = {\color{blue} -}\vartheta_S f(h_S) \quad &&  \text{on }  (0,T)\times \Gamma_0, \\
    &  \tilde K_P(h_P)(\partial_{x_3} h_P + 1) = {\color{blue} -}  \mathcal T_{\rm pot}   && \text{on } (0,T)\times  \Gamma_0, \\
    & h_S(0) = h_{S,0}, \qquad h_P(0) = h_{P,0} && \text{in }  \Omega,
   \end{aligned}
  \end{equation}
  where $\theta_S^\ast= \vartheta_R \theta_R(h_S) + \vartheta_B \theta_B(h_S)$, $\tilde K_P(h_P) = (k_{\rm ax} \rho g/ L_3) K_P(h_P)$,
  $\vartheta_J = |\hat J|/|Y|$, for $J=R,B, S$, $\vartheta_{\Gamma, P} = |\Gamma_P|/|Y_P|$,  $\vartheta_\Gamma = |\Gamma_P|/|Y|$,  and
  $\Gamma_N = \partial \hat \Omega \times (-L_3, 0)$,
  $$
  \begin{aligned}
  V(\Omega) &= \{ v \in H^1(\Omega) : v =0 \text{ on } \Gamma_{L_3} \},  \\
  U(\Omega) &= \{ w \in L^2(\Omega): \partial_{x_3} w\in L^2(\Omega),   w = 0 \text{ on } \Gamma_{L_3} \},
  \end{aligned}
  $$   and
  $$
  \begin{aligned}
 & K_{S, \rm hom, ij}(h_S) = \frac 1{|Y|} \int_{\hat S} \Big[ K_{S}(y, h_S) \delta_{ij} + K_{S}(y, h_S) \partial_{y_i} w^j \big] dy, \quad i,j=1,2, \\
 & K_{S, \rm hom, 3j}(h_S)= K_{S, \rm hom, j3}(h_S) = \frac 1{|Y|} \int_{\hat S}  K_{S}(y, h_S) dy \,\delta_{3 j},  \quad  j=1,2,3,
  \end{aligned}
  $$
  with $w^j$, for $j=1,2$, being the solutions of the unit cell problems
  \begin{equation}\label{unit_cell_prob_macro}
   \begin{aligned}
 \nabla_y \cdot \big( K_S(y, h_S)(\nabla_y w^j + e_j)\big) & = 0 \; \; \;  \text{ in } \hat S, \\
 K_S(y, h_S)(\nabla_y w^j + e_j)\cdot \nu & = 0 \; \; \; \text{ on } \Gamma_P, \; && w^j \;\;  Y - \text{ periodic},
   \end{aligned}
  \end{equation}
  where $\{ e_1, e_2\}$ is the  canonical basis in $\mathbb R^2$. \\
 Under additional assumptions that  $K_J$, for $J=B, R, P$, are Lipschitz continuous
  and $f$ is non-decreasing, the solution of macroscopic model~\eqref{macro_model} is unique.
 \end{theorem}

 \begin{proof}
 Considering $\phi(t,x) = \phi_1(t,x) + \ve \phi_2(t,x, \hat x/\ve)$, with $\phi_1 \in C^\infty_0(0,T; C^\infty_{\Gamma_{L_3}}\!\!(\Omega))$ and $\phi_2 \in C^\infty_0((0,T)\times \Omega; C^\infty_{\rm per}(Y))$,  and $\psi \in C^\infty_0(0,T;C^\infty_{\Gamma_{L_3}}(\Omega))$ as test functions in~\eqref{weak_formulation_soil} and \eqref{weak_formulation_plant}
 and applying the unfolding operator  we have
 \begin{equation}\label{unfold_conv_1_S}
\begin{aligned}
&	- \big\langle {\theta}_S(y,\mT^\ve( h_S^\ve)), \partial_t \mT^\ve(\phi_1)  + \ve   \partial_t \mT^\ve(\phi_2) \big\rangle_{\Omega_T\times \hat S}  \\
& \qquad +   k_\Gamma \big\langle \mT^\ve(h_S^\ve)- \mT^\ve(h_P^\ve),\mT^\ve(\phi_1) + \ve \mT^\ve(\phi_2) \big\rangle_{\Omega_T\times \Gamma_P}
	\\
&	\quad  + \big\langle K_S(y, \mT^\ve(h_{S}^\ve))(\mT^\ve(\nabla h_{S}^\ve) + {e}_3), \mT^\ve(\nabla \phi_1) + \ve \mT^\ve(\nabla \phi_2)\big \rangle_{\Omega_T \times \hat S}  \\
 & \qquad
 =  - \big \langle f(\mT^\ve(h_S^\ve)), \mT^\ve(\phi_1) + \ve \mT^\ve(\phi_2) \big\rangle_{\Gamma_{0, T}\times \hat S},
\end{aligned}
\end{equation}
and
\begin{equation}\label{unfold_conv_1_P}
\begin{aligned}
& - \big\langle {\theta}_P(\mT^\ve(h_P^\ve)), \partial_t \mT^\ve(\psi)\big\rangle_{\Omega_T\times Y_P}  	+  k_\Gamma \big \langle  \mT^\ve(h_P^\ve)- \mT^\ve(h_S^\ve), \mT^\ve(\psi) \big\rangle_{\Omega_T\times \Gamma_P}
\\
&	\qquad + \big\langle H_r^\ve K_P(\mT^\ve(h_{P}^\ve))(\mT^\ve(\nabla h_{P}^\ve) + {e}_3),\mT^\ve( \nabla \psi)\big \rangle_{\Omega_T\times Y_P}
\\
& \qquad = - \big\langle \mathcal{T}_\text{pot},\mT^\ve(\psi) \big\rangle_{\Gamma_{0,T} \times Y_P}.
\end{aligned}
\end{equation}
Using the properties of the unfolding operator $\mT^\ve$, i.e.~$\mT^\ve(\phi) \to \phi$ for $\phi \in L^p((0,T)\times A)$, with $1<p<\infty$ and $A = \Omega$  or $A= \partial \Omega$, $\mT^\ve(\phi (\cdot, \cdot , \cdot/\ve)) \to \phi$ for $\phi \in L^p((0,T)\times A; C_{\rm per} (Y))$,   and $\ve \mT^\ve(\nabla \phi(\cdot,\cdot,\cdot/\ve)) \to \nabla_y \phi$ for $\phi \in L^p(0,T; W^{1,p}( \Omega); C^1_{\rm per}(Y))$,  and the relations between the two-scale (strong two-scale) convergence of a sequence and  weak (strong) convergence of the corresponding unfolded sequence, see e.g.~\cite{cioranescu2018periodic}, together with the convergence results in Lemmas~\ref{lem:conver_hS},~\ref{lemma:two-scale_convergence_2},~\ref{lemma:two-scale_convergence_theta_S}, and~\ref{lemma:strong_conv_hP}, and  taking  in~\eqref{unfold_conv_1_S} and~\eqref{unfold_conv_1_P} the limit as $\ve \to 0$ we obtain
\begin{equation}\label{unfold_conv_1_S_lim}
\begin{aligned}
- \big\langle {\theta}_S(y,h_S), \partial_t \phi_1  \big\rangle_{\Omega_T\times \hat S}
+   k_\Gamma \big\langle h_S-  h_P, \phi_1  \big\rangle_{\Omega_T\times \Gamma_P} + \big \langle f(h_S), \phi_1 \big\rangle_{\Gamma_{0, T}\times \hat S}\\
 + \big\langle K_S(y, h_{S})(\nabla h_{S} + \nabla_{y,0} h_{S,1} + {e}_3), \nabla \phi_1+ \nabla_{y,0} \phi_2\big \rangle_{\Omega_T \times \hat S} =  0 ,
\end{aligned}
\end{equation}
\begin{equation}\label{unfold_conv_1_P_lim}
\begin{aligned}
- \big\langle {\theta}_P(h_P), \partial_t \psi \big\rangle_{\Omega_T\times Y_P}  + \big\langle \tilde  K_P(h_{P})(\partial_{x_3} h_{P} + 1), \partial_{x_3} \psi \big \rangle_{\Omega_T\times Y_P}  \\
+  k_\Gamma \big \langle h_P- h_S, \psi \big\rangle_{\Omega_T\times \Gamma_P} +  \big\langle \mathcal{T}_\text{pot},\psi \big\rangle_{\Gamma_{0,T}\times Y_P} & = 0.
\end{aligned}
\end{equation}
 Notice that uniform boundedness and continuity of $K_J$, for $J=P,R,B$, and convergence a.e.~of $\mT^\ve(h_S^\ve)$ and $\mT^\ve(h_P^\ve)$, together with the Lebesgue dominated convergence theorem,  imply strong convergence   $K_S(y,\mT^\ve(h_{S}^\ve)) \to K_S(y, h_S)$ in $L^p((0,T)\times \Omega \times \hat S)$ and $K_P(\mT^\ve(h_{P}^\ve)) \to K_P(h_P)$  in
$L^p((0,T)\times \Omega \times Y_P)$, for any $1<p<\infty$.  To show  the convergence of $f(\mT^\ve(h_S^\ve))$ we consider
\begin{equation}\label{estim:conv_fhS}
\begin{aligned}
 &\|\theta_S(\cdot, \mT^\ve(h_S^\ve)) - \theta_S(\cdot, h_S)\|^2_{L^2(\Gamma_{0, T}\times \hat S)} \! \leq C \! \!\!\!\sum_{J=R,B}\!\!\! \!\||\theta_J(h_S^\ve) - \theta_J(h_S)|^2\|_{L^1(\Gamma^\ve_{J, 0, T})}
\\ & \; \leq C\!\!\! \sum_{J=R,B}\!\!\Big[\||\theta_J(h_S^\ve) - \theta_J(h_S)|^2\|_{L^1(J^\ve_T)}
+  \|I_\ve\nabla(\theta_J(h_S^\ve) - \theta_J(h_S))^2\|_{L^1(J^\ve_T)} \Big]\\
& \; \leq  C \Big[\|\theta_S(\cdot, h_S^\ve) - \theta_S(\cdot, h_S)\|^2_{L^2(S^\ve_T)}  \\
&\qquad +   \|\theta_S(\cdot, h_S^\ve) - \theta_S(\cdot, h_S)\|_{L^2(S^\ve_T)}\big(\|\nabla h_S^\ve\|_{L^2(S^\ve_T)} + \|\nabla h_S\|_{L^2(S^\ve_T)}\big) \Big].
\end{aligned}
\end{equation}
Here we used the trace theorem   and  the standard scaling argument to obtain
$$
\|u\|_{L^1(\Gamma^\ve_{J,0})} \leq C \big( \|u\|_{L^1(J^\ve)} +  \|I_\ve \nabla u\|_{L^1(J^\ve)} \big), \quad \text{ for } \; u \in W^{1,1}(J^\ve), \; \; J = R,B.
$$
Then using in \eqref{estim:conv_fhS} the strong convergence of  $\theta_S(y, \mT^\ve(h_S^\ve))$ in $L^2((0,T)\times \Omega\times \hat S)$, see Lemma~\ref{lemma:two-scale_convergence_theta_S}, we obtain the strong convergence of $\theta_S(y, \mT^\ve(h_S^\ve))$ in $L^2((0,T)\times \Gamma_0\times \hat S)$. The monotonicity and continuity  of $\theta_S(y, \cdot)$ ensure  $\mT^\ve(h_S^\ve) \to h_S$ a.e.~in $(0,T)\times \Gamma_0\times \hat S$ and then the continuity and boundedness of $f$ imply  $f(\mT^\ve(h_S^\ve))\to f(h_S)$  in  $L^2((0,T)\times \Gamma_0\times \hat S)$.

Using the standard arguments, see e.g.~\cite{Allaire_1992, cioranescu2012homogenization}, from \eqref{unfold_conv_1_S_lim} by considering $\phi_1 =0$ we obtain the unit cell problems~\eqref{unit_cell_prob_macro} and the formula for $K_{S, {\rm hom}}$. Then, considering \eqref{unfold_conv_1_S_lim}, with $\phi_2 =0$ and first $\phi_1 \in C^1_0((0,T)\times \Omega)$ and then $\phi_1 \in C_0^1(0, T; C^1_{\Gamma_{L_3}}(\Omega))$,   and \eqref{unfold_conv_1_P_lim}  first with  $\psi \in C^1_0((0,T)\times \Omega)$ and then $\psi \in C_0^1(0, T; C^1_{\Gamma_{L_3}}(\Omega))$ yields the macroscopic equations \eqref{macro_model}. Considering $\phi_2 =0$ and $\phi_1, \psi \in C^1([0,T]; C^1_0 (\Omega))$, with $\phi_1 (T) =\psi(T) =0$, as test functions   in the weak formulation of~\eqref{richards_soil},~\eqref{bcs_soil}, and~\eqref{richards_root},~\eqref{bcs_root} respectively, and using  monotonicity of $\theta_S^\ast$ and $\theta_P$ imply that $h_S$ and $h_P$ satisfy the corresponding initial conditions.

Using Assumption~\ref{assumption}, together with the Lipschitz continuity of $K_B$, $K_R$ and $K_P$, and $f$ being non-decreasing, in the similar way as in the proof of Theorem~\ref{theorem_microscopic_models_uniqueness}, we obtain the uniqueness of solutions of the macroscopic model~\eqref{macro_model}.
 More specifically, following the same calculations as in Lemma~\ref{lemma_uniqueness_lemma_1} and considering $\sigma_\delta^\pm(h_S-v^0)$ and $\sigma_\delta^\pm(h_P-w^0)$, for $v_0 \in V(\Omega)$ and $w_0 - a \in U(\Omega)$ and  $\sigma^\pm_\delta$ defined in \eqref{our_eta_prime}, as test functions for macroscopic equations~\eqref{macro_model}, we obtain
$$
\begin{aligned}
-\int_0^T\!\!  \Big\langle\eta_{S,\delta}^+(h_{S},v^0), \frac{d\kappa}{dt}\Big\rangle_{\Omega} dt +
\int_0^T\!\!  \Big[\big\langle K_{S, \rm hom}(h_S)(\nabla h_{S} + {e}_3),\! \nabla \sigma_\delta^+(h_S-v^0) \big \rangle_{\Omega}
\\
\qquad  +   k_\Gamma \vartheta_\Gamma \big\langle h_S-h_P, \sigma_\delta^+(h_S-v^0)  \big \rangle_{\Omega} +
\big\langle \vartheta_S f(h_S), \sigma_\delta^+(h_S-v^0) \big\rangle_{\Gamma_0} \Big] \kappa (t) dt \leq 0, \\
-\int_0^T\!\! 	\Big\langle \vartheta_P \eta_{P,\delta}^+(h_{P},w^0), \frac{d\kappa}{dt}\Big\rangle_{\Omega} \! +
\int_0^T\!\! \Big[\big \langle \vartheta_P \tilde K_P(h_P)(\partial_{x_3} h_{P} + 1),  \partial_{x_3} \sigma_\delta^+(h_P-w^0)\big \rangle_{\Omega} \\
+   k_\Gamma \vartheta_{\Gamma} \big\langle h_P-h_S,  \sigma_\delta^+(h_P-w^0) \big \rangle_{\Omega}	+ \big \langle \vartheta_P \mathcal{T}_\text{pot},\sigma_\delta^+(h_P-w^0) \big\rangle_{\Gamma_0 }\Big] \kappa (t) dt \leq 0,
\end{aligned}
$$
for $\kappa \in C^\infty_0(0,T)$,
where $\eta^\pm_{S, \delta}(h_S, v^0) =  \int_{v^0}^{h_S}\sigma_\delta^\pm(z-v^0)\big[ {\theta}_R^{'}(z) \vartheta_R   +   {\theta}_B^{'}(z)\vartheta_B \big]dz$ and  $\eta^\pm_{P,\delta}(h_P,w^0) = \int_{w^0}^{h_P}\sigma_\delta^\pm(z-w^0) {\theta}_P^{'}(z)dz$, and $\vartheta_P=|Y_P|/|Y|$.
The same inequality holds  with $\eta_{S,\delta}^-(h_{S},v^0)$ and  $\eta_{P,\delta}^-(h_{P},w^0)$ instead of $\eta_{S,\delta}^+(h_{S},v^0)$ and  $\eta_{P,\delta}^+(h_{P},w^0)$,  and  with $\sigma_\delta^-(h_S-v^0)$ and  $\sigma_\delta^-(h_P-w^0)$ 	instead of $\sigma_\delta^+(h_S-v^0)$ and  $\sigma_\delta^+(h_P-w^0)$. Applying the time variable doubling approach and following the same calculations as in the proof of Theorem~\ref{theorem_microscopic_models_uniqueness}, yield the uniqueness result for problem~\eqref{macro_model}.  Notice that  in both inequalities for $h_P$ and $h_S$ the terms involving spatial derivatives are estimated in the same way as in~\eqref{inequality_uniqueness_lower_bound_flux_term_double_time}.  Consideration of
cases (i)~$h_{S,1} - h_{S,2} \leq 0$,  (ii)~$h_{P,1} - h_{P,2} \leq 0$, (iii)~$h_{S,1} - h_{S,2} \geq \delta$,  $h_{P,1} - h_{P,2} \geq \delta$,  (iv)~$h_{S,1} - h_{S,2} \geq \delta$, $0<h_{P,1} - h_{P,2} <\delta$,
(v)~$\delta > h_{S,1} - h_{S,2}> h_{P,1} - h_{P,2} >0$, and (vi)~$h_{P,1} - h_{P,2}> h_{S,1} - h_{S,2} >0$, where $(h_{S,1}, h_{P,1})$ and $(h_{S,2}, h_{P,2})$ are two solutions of~\eqref{macro_model}, in the same way as in the proof of Theorem~\ref{theorem_microscopic_models_uniqueness},  implies  $\big\langle (h_{S,1} - h_{S,2})-(h_{P,1} - h_{P,2}), \sigma_\delta^+(h_{S,1}-h_{S,2})  - \sigma_\delta^+(h_{P,1}-h_{P,2}) \big \rangle_{\Omega}\geq 0$. The non-decreasing assumption on $f$ ensures the non-negativity of the boundary integral.
 \end{proof}

\section{Conclusion}
In this work we considered multiscale modelling of root and soil water transport, accounting for root distribution in the soil and the differing hydraulic properties of plant tissue, the rhizosphere and bulk soil. Water uptake by roots was defined by the differences in pressure head across the soil-root interfaces. In the macroscopic model~\eqref{macro_model},  derived using homogenization techniques, the effects of plant root presence and varying soil hydraulic properties are incorporated via the water content and hydraulic conductivity functions that encode the parameters and structure from the microscale. Due to considered scaling in the microscopic model, motivated by experimental parameter values, the effective macroscopic functions incorporating microscopic properties can be computed in advance,
thus making the macroscopic model computationally efficient. The macroscopic water content functions and hydraulic conductivity matrix in~\eqref{macro_model} are related to the matrix function and factor that are used in~\cite{mair2022model}  to account for the proportion of the soil
occupied   by roots and model the preferential soil water flow that their presence induces.
Rigorous derivation of the macroscopic model~\eqref{macro_model} provides justification for the phenomenologically-defined hydraulic conductivity in the preferential flow model in~\cite{mair2022model}. Model~\eqref{macro_model} also incorporates root water
uptake and root influence on soil hydraulic properties at the macroscopic scale, taking
into account microscopic characteristics of the soil and plant tissue.  This provides a formulation that is more detailed and better suited to parametrisation than the phenomenological formulation  of~\cite{mair2022model,mair2023can}, whilst also being more computationally efficient
than the microscopic models~\cite{doussan1998modelling, doussan2006water}. Derivation of a macroscopic root water uptake
function from a micro-scale network model of root system hydraulic architecture was
addressed in~\cite{vanderborght2024combining}, but only a one-dimensional domain was considered and water content was assumed uniform in certain soil volumes. Numerical analysis and simulations of the
macroscopic model~\eqref{macro_model} and extension of the multiscale analysis to locally-periodic
root systems will be considered in the future research.


\section*{Acknowledgements}
AM was supported by the EPSRC Centre for Doctoral Training in Mathematical Modelling, Analysis \& Computation
(MAC-MIGS) funded by the UK Engineering and Physical Sciences Research Council (EPSRC) grant EP/S023291/1,
Heriot-Watt University, and the University of Edinburgh.

\section*{Appendix: Admissible functions for water content and hydraulic conductivity}
The function for soil water content~$\theta_S$ can be defined in accordance with the formulation of~\cite{van1980closed} and  extended for positive values of soil water pressure head
\begin{equation}\label{standard_soil_water_content_expression}
\theta_S(h_S)=\begin{cases}
\frac{\theta_{S,\text{sat}}-1}{(1+\lvert\alpha h_S\rvert^{n})^{m}} + 1, & \; h_S >0,\\
\theta_{S,\text{res}} + \frac{\theta_{S,\text{sat}}-\theta_{S,\text{res}}}{(1+\lvert\alpha h_S\rvert^{n})^{m}}, & \; h_S\leq 0.
\end{cases}
\end{equation}
The function $\theta_S$ has different parameter values for bulk soil and the rhizosphere. Water content~$\theta_P$ for root tissue can be defined using the models in~\citep{janott2011one,bittner2012functional} and extending for positive root tissue pressure heads in a similar way as~\eqref{standard_soil_water_content_expression}:
\begin{equation}\label{standard_root_water_content_expression}
\theta_P(h_P) = \begin{cases}
\frac{\theta_{P,\text{sat}}-1}{(1+\lvert\alpha h_P\rvert^{n})^{m}} + 1, & \;  h_P>0, \\
\Big(\theta_{P,\text{sat}} + \frac{h_P}{E}\Big), & \; h_{\text{ae}}\leq h_P < 0,\\
\Big(\theta_{P,\text{sat}} + \frac{h_{\text{ae}}}{E}\Big)\Big(\frac{h_P}{h_{\text{ae}}}\Big)^{-\lambda_P}, & \; h_P< h_{\text{ae}}.
\end{cases}
\end{equation}
Here~$\theta_{P,\text{sat}}$~$\big(\text{L}^3\text{L}^{-3}\big)$ is the root tissue porosity (or root xylem saturated water content), the air entry pressure head is~$h_{\text{ae}}$~$(\text{L})$, the root xylem elastic modulus is~$E$~$(L)$, $\lambda_P$ is the Brooks and Corey exponent,
and~$\alpha$,~$n>1$ and~$m=1-1/n$ are as in~\citep{van1980closed}.

A suitable expression for~$K_S$ can be defined by taking a regularisation of the Van Genuchten~\citep{van1980closed} formulation and extending it for positive pressure heads. The first step is to define the  regularisation of the water content function
$$
\theta_{S,\delta}(h_S) =
\begin{cases}
\frac{\theta_{S,\text{sat}}-1}{(1+\lvert\alpha h_S\rvert^{n})^{m}} + 1 - \frac{\delta}{2}, & \; h_S >0,\\
\theta_{S,\text{res}} + \frac{\theta_{S,\text{sat}}-(\theta_{S,\text{res}}+\delta)}{(1+\lvert\alpha h_S\rvert^{n})^{m}} + \frac{\delta}{2},
& \; h_S\leq 0,
\end{cases}
$$
where~$0<\delta<<\theta_{S,\text{sat}}-\theta_{S,\text{res}}$.
The regularised soil hydraulic conductivity $K_{S,\delta}(h_S)$ satisfying the conditions in Theorems~\ref{theorem_microscopic_models_existence} and~\ref{theorem_microscopic_models_uniqueness} is given as
$$
K_{S,\delta}(h_S) \!\!=\!\!
\begin{cases}
K_{S,\text{sat}}\Big[1-\frac{\delta}{2(\theta_{S,\text{sat}}-\theta_{S,\text{res}})}\Big]^l\Big[1-\Big[1-\Big[1-\frac{\delta}{2(\theta_{S,\text{sat}}-\theta_{S,\text{res}})}\Big]^{\frac{1}{m}}\Big]^{m}\Big]^2,& \!\!\!\! h_S > 0,\\
K_{S,\text{sat}}\Big[\frac{\theta_{S,\delta}(h_S)-\theta_{S,\text{res}}}{\theta_{S,\text{sat}}-\theta_{S,\text{res}}}\Big]^l\Big[1 - \Big[1-\Big[\frac{\theta_{S,\delta}(h_S)-\theta_{S,\text{res}}}{\theta_{S,\text{sat}}-\theta_{S,\text{res}}}\Big]^{\frac{1}{m}}\Big]^{m}\Big]^2,& \!\!\! \! h_S \leq 0.
\end{cases}
$$
In a similar way, we define a regularised expression for root tissue water content
$$
\theta_{P, \delta}(h_P) =
\begin{cases}
\frac{\theta_{P,\text{sat}}-1}{(1+ |\alpha h_P|^{n})^{m}} + 1, & \; h_P>0,\\
\Big(\theta_{P,\text{sat}} + \frac{h_P}{E}\Big), & \; h_{\text{ae}}\leq h_P < 0,\\
\Big(\theta_{P,\text{sat}} + \frac{h_{\text{ae}}}{E}\Big)\Big(\frac{h_P}{h_{\text{ae}}}\Big)^{-\lambda_P} + \delta(1-e^{h_P-h_{\text{ae}}}),
& \; h_P< h_{\text{ae}},
\end{cases}
$$
where~$\delta >0$, and a regularised version of the function~$K_{P}(h_P)$ given as
$$
K_{P, \delta}(h_P) \!\! =	\!\!	\begin{cases}
1, & \!\!\! \!h_P \geq 0,\\
\big(1+\frac{h_\text{ae}}{\theta_{P,\text{sat}} E}\big)^{\frac{2}{\lambda_P}+1} +
\Big[1-\big(1+\frac{h_\text{ae}}{\theta_{P,\text{sat}} E}\big)^{\frac{2}{\lambda_P}+1}\Big]\big[\frac{h_P-h_{\text{ae}}}{h_{\text{ae}}}\big]^2,
& \!\! \! \! h_{\text{ae}} \leq h_P < 0,\\
\Big[\big(1 + \frac{h_{\text{ae}}}{\theta_{P,\text{sat}} E}\big)\big(\frac{h_P}{h_{\text{ae}}}\big)^{-\lambda_P}+\frac{\delta}{\theta_{P,\text{sat}}}\big(1-e^{(h_P-h_{\text{ae}})}\big)\Big]^{\frac{2}{\lambda_P}+1}, & \!\!\!\! h_p < h_{\text{ae}}<0.
\end{cases}
$$
An admissible function~$f:\mathbb{R}\to\mathbb{R}$ for the water flux at the upper soil surface~$\Gamma_{S, 0}$ is
\begin{equation}\label{admissable_f}
\begin{aligned}
f(h_S)= \text{K}_\text{e}(h_S)\text{ET}_\text{o}-\text{P} + \text{RO}(h_S),
\end{aligned}
\end{equation}
where~$\text{ET}_\text{o}$ is the reference evapotranspiration~$(\text{L~T}^{-1})$, see~\citep{allen1998crop}, $\text{K}_\text{e}:\mathbb{R}\to[0,\infty)]$ controls the amount of evaporation from the soil surface, $\text{P}$~$(\text{L~T}^{-1})$ models the precipitation. The term~$\text{RO}$~$(\text{L~T}^{-1})$ incorporates runoff which occurs when precipitation lands on a saturated soil surface and cannot infiltrate downwards, and can be defined as
\begin{equation}\label{model: runoff function}
\begin{aligned}
\text{RO}(h,t) = \text{P}(t)\Big(1+\frac{\exp(a(\theta_S(h_S)-\theta_{S, \text{sat}})) - 1}{\exp(a(\theta_S(h_S)-\theta_{S, \text{sat}})) + 1}\Big),
\end{aligned}
\end{equation}
with the constant~$a>0$ set to a large enough value so that if the surface is fully saturated, $\theta_S = \theta_{S, \text{sat}}$, then~$\text{RO} = \text{P}$. In this work, the rate of precipitation $\text{P}$ is assumed to be constant. The evaporation function~$\text{K}_\text{e}$ can be defined  as in~\citep{allen1998crop}:
\begin{equation}\label{evaporation_function}
\begin{aligned}
\text{K}_\text{e}(h_S)=\min\{\text{K}_\text{r}(h_S)(\text{K}_\text{c max}-\text{K}_\text{cb}), \text{f}_\text{ew}\text{K}_\text{c max}\},
\end{aligned}
\end{equation}
where~$\text{K}_\text{c max}>0$ is a constant such that~$\text{K}_\text{c max}\text{ET}_\text{o}$ is the upper limit on evapotranspiration from the soil surface, and~$f_\text{ew}>0$ represents the fraction of the soil surface that is exposed and wetted  with  $
f_\text{ew}=\min\{1-f_\text{c}, f_\text{w}\}$, where~$f_\text{c}$ is the fraction of the soil surface that is covered by vegetation and~$f_\text{w}$ the average fraction that is wetted during precipitation.
The soil evaporation reduction coefficient~$\text{K}_\text{r}$ depends on soil water content and can be formulated as
$$
\text{K}_\text{r}(h_S)=
\begin{cases}
0, &\quad  \theta_S(h_S) \leq 0.5\theta_{\text{wp}},\\
\frac{\theta_S(h_S)-0.5\theta_{\text{wp}}}{\theta_\text{fc}-0.5\theta_\text{wp}}, &\quad  0.5\theta_\text{wp}<\theta(h)<\theta_\text{fc},\\
1, & \quad \theta_S(h_S)\geq\theta_\text{fc},
\end{cases}
$$
see~\citep{mair2023can}, where~$\theta_{\text{wp}}$ and~$\theta_{\text{fc}}$ are the water contents at wilting point and field capacity respectively~\cite{allen1998crop}.

\bibliographystyle{siamplain}
\bibliography{homogenization_bibliography}

\end{document}